\newcolumntype{L}{>{$}l<{$}} 
\crefname{lem}{Lemma}{Lemmas}
\Crefname{lem}{Lemma}{Lemmas}
\crefname{thm}{Theorem}{Theorems}
\Crefname{thm}{Theorem}{Theorems}
\crefname{prop}{Proposition}{Propositions}
\Crefname{prop}{Proposition}{Propositions}
\crefname{defn}{Definition}{Definitions}
\Crefname{defn}{Definition}{Definitions}
 \numberwithin{equation}{section}
	\newcommand*{\op}[1]{{\mathrm{#1}}}
	\DeclareMathSymbol{\Gamma}{\mathalpha}{operators}{0}
	\DeclareMathSymbol{\Delta}{\mathalpha}{operators}{1}
	\DeclareMathSymbol{\Theta}{\mathalpha}{operators}{2}
	\DeclareMathSymbol{\Lambda}{\mathalpha}{operators}{3}
	\DeclareMathSymbol{\Xi}{\mathalpha}{operators}{4}
	\DeclareMathSymbol{\Pi}{\mathalpha}{operators}{5}
	\DeclareMathSymbol{\Sigma}{\mathalpha}{operators}{6}
	\DeclareMathSymbol{\Upsilon}{\mathalpha}{operators}{7}
	\DeclareMathSymbol{\Phi}{\mathalpha}{operators}{8}
	\DeclareMathSymbol{\Psi}{\mathalpha}{operators}{9}
	\DeclareMathSymbol{\Omega}{\mathalpha}{operators}{10}
	\renewcommand{\epsilon}{\varepsilon}
	\DeclareMathSymbol{\T}{\mathbin}{AMSb}{"54}
\newcommand{\defeq}{\vcentcolon=}
\newcommand{\pFq}[2]{\ensuremath{
        {\phantom{\,}}_{#1}F_{#2}
}
    }
\begin{document}
 \author{Yang Liu}
 \address{
 Max Planck Institute for Mathematics,
 Vivatsgasse 7,
 53111 Bonn,
 Germany
 }
 \email{yangliu@mpim-bonn.mpg.de}
 \title[Hypergeometric function and Modular Curvature I.  ] {
    Hypergeometric function and Modular Curvature I. 
Hypergeometric functions in Heat Coefficients 
}

\keywords{hypergeometric functions, Appell functions, Lauricella functions,
    noncommutative tori,
    pseudo differential calculus, modular
curvature, heat kernel expansion
}

\subjclass[2010]{47A60, 46L87, 58B34, 58J40, 33C65, 58Exx}
\date{\today} 
\thanks{
} 

\begin{abstract}

We give a new proof of the rearrangement lemma that works for all dimensions
and all heat coefficients in the study of modular geometry on noncommutative
tori. The building blocks of the spectral functions are
landed in a hypergeometric family knowns as Lauricella functions of type $D$. 
 We investigate the differential and recursive relations among the family and
 obtain a full reduction to  Gauss hypergeometric functions. 
As for applications, we give phase one demonstration on
how the hypergeometric features lead to new simplifications in 
computations involved in the modular geometry. 

\end{abstract}

\maketitle

\tableofcontents

\section{Introduction}

A recent breakthrough in the metric aspect of noncommutative geometry is Connes
and Moscovici's study of the modular Gaussian curvature on noncommutative
two tori \cite{MR3194491}. 
The essential contribution is a spectral framework for exploring  the notion of
intrinsic curvature in regard to a family of curved metrics parametrized by
noncommutative coordinates. 
In Connes's spectral triple paradigm, metrics are implemented by geometric
differential operators, such as  Laplacian or Dirac operators. Modeling on
spectral geometry of Riemannian manifolds, the
corresponding local invariants are encoded in the coefficients of the
small time heat trace expansion, see
\cref{eq:small-time-heat-asym-functional}. 
In contrast to using the metric tensors in Riemannian geometry, 
we  introduce a family of metrics via perturbing  a given  Laplacian or
Dirac by noncommutative coordinates. The simplest example is the conformal
change of metric on $\T^2_\theta$ whose perturbation, roughly speaking, is solely
$\Delta \to \Delta_\varphi = e^{ h /2 } \Delta e^{h /2 }$. The
metric coordinate (referred as log-Weyl factor)  
runs over  self-adjoint elements $h = h^* \in  C^\infty(\T^2_\theta)$
in the algebra of smooth coordinate functions.
Modular geometry mainly concerns variational problems on such family of metrics
adapted from Riemannian geometry.
For example, the modular Gaussian curvature is the functional gradient of the
analogue of OPS (Osgood-Phillips-Sarnak) functional on Riemann surfaces
\cite{osgood1988extremals}.

The quantum features of the metrics reside on the  inevitable difficulty 
that the metric coordinates and their derivatives do not commute,
which abolishes many useful rules in calculus.
The remedy initiated in \cite{MR2907006} is a rearrangement process which
compress the
ansatz into some intriguing spectral functions. A toy example is the second
differential of the power function $\nabla^2 k^3$  where 
$k \in  C^\infty(\T^2_\theta)$ invertible and $\nabla$ is a derivation: 
\begin{align}
    \nabla^2 k^3 &= k^2 (\nabla^2 k) + (\nabla k) k^2 +
    k (\nabla k) (\nabla k) +  (\nabla k) k (\nabla k) + (\nabla k) (\nabla k) k
    \nonumber
    \\
    &= k^2 (1+ \mathbf y) (\nabla^2 k) + k (1+\mathbf y^{(1)} +\mathbf y^{(1)}
    \mathbf y^{(2)}) (\nabla k \otimes  \nabla k)
    .
    \label{eq:toyeg-modspecfun}
\end{align}
We first compute the derivation via the Leibniz rule and then
use the conjugation $\mathbf y = k^{-1}(\cdot )k$ to move the zero order
differential to the very left following the notations in
\S\ref{subsec:smooth-funcal-A-tensor-n-times}. 
A more interesting one is the Duhamel's formula for differentiating  the
exponential of a self-adjoint element:
\begin{align*}
    \nabla  e^h =\frac{e^{\mathbf x} -1}{\mathbf x} (\nabla  h), \, \, \, \, 
    \mathbf x \defeq [\cdot ,h],
\end{align*}
in which the generating function of Bernoulli numbers appears.
%

The modular curvature $R_{\Delta_k}$ with respect to the Weyl factor $k$ 
  in \cref{eq:psecal-R_Delta_k}, looked
unconventional at the first glance, is merely a decorated version of
\cref{eq:toyeg-modspecfun}. In other words, it is a combination of the second
derivatives of the metric coordinate, which is an expected property inherited from 
the notion of scalar curvature in Riemannian geometry.
%
The new ingredients are 
 the spectral functions $K_{ \Delta_{k} }$ and  $H_{ \Delta_{k} }$ 
 (cf. \cref{eq:b2cal-KDeltak,eq:b2cal-HDeltak}) arising
from pseudo-differential calculus, which are more sophisticated and intriguing
comparing to truncated  geometry series $1+y$ and $1+y_1 + y_1 y_2$ in
\cref{eq:toyeg-modspecfun}. 


The result that
defines the building blocks of those functions are referred as rearrangement lemma:
cf. \cite[Lemma 6.2]{MR2907006} and \cite[Lemma 6.2]{MR3194491}.
A rigours   prove is given
later by Lesch \cite[Corollary 3.5]{leschdivideddifference} with other deep
observations that motivate many of the results in the paper and the sequel
\cite{Liu:2018ab}. More examples of the spectral functions are provided in the
computation of the $V_4$-coefficient on $\mathbb{T}^2_\theta$
\cite{2016arXiv161109815C} and in the spectral geometry of toric noncommutative
manifolds \cite{Liu:2015aa,LIU2017138}. 

The paper is divided into two parts:  \S\ref{sec:funcal-reamt-lemma} and
\S\ref{sec:diff-and-recursive-relations} consist of new upgrades to the
rearrangement lemma while \S\ref{sec:symbloic-computation} and 
\S\ref{sec:FunRel} focus on the applications onto a simplified model of modular
geometry on general noncommutative $m$-tori. 


In \S\ref{sec:funcal-reamt-lemma}, we complete the toolkit of the modular
geometry by providing a different proof for the
rearrangement lemma that works for arbitrary dimension $m$ and heat coefficient
index $j$ (as in \cref{eq:small-time-heat-asym-functional}). 
Moreover, the result \cref{thm:rarr-lem} lands the functions into a family of
hypergeometric functions known as Lauricella functions $F_D^{(n)}$ of type $D$.  
The hypergeometric features,
for non-experts of special functions, 
are displayed as structures that are interested from algebraic geometry point
of view.  Namely, the integral representations are built upon the
boundary hypersurface equations of 
the standard simplexes $\triangle^n$, see 
\cref{eq:n-simplex-para-defn,eq:hypersurfaces-of-n-simplex,eq:omega-alpha-u-defn,eq:B_n(s)-defn}. 

In \S\ref{sec:diff-and-recursive-relations}, we investigate the differential
and recursive relations which are two sides of a coin regarding to the
hypergeometric features, starting with the one and two variable cases in
\S\ref{subsec:gauss-appell-hgfuns}. Combining with Lesch's observation on divided
difference, we obtain a reduction formula (cf. \cref{thm:red-to-GF21})
which unifies and extends the discussion in
\cite[\S5.1,5.2]{leschdivideddifference} and \cite[\S8.2]{2016arXiv161109815C}.
Roughly speaking, the general $H_{\alpha_0, \ldots, \alpha_n}$ consists of 
partial derivatives of $H_{\alpha_0,1,\ldots,1}$  while $H_{\alpha_0,1,\ldots,1}$
are  the $n$-th divided differences of $z^nH_{\alpha_0,1}(z)$. 
We shall provide more functional relations in the second part 
\cite{Liu:2018ab}to make the family 
$H_{\bar \alpha}(\bar z;m;j)$ into a nice basis
for the study of the modular geometry. 

In \S\ref{sec:symbloic-computation}, we test the upgraded
rearrangement lemma on noncommutative tori $C^\infty(\T^m_\theta)$
of arbitrary dimension. Connes's pseudo-differential calculus extends to higher
dimensions in a straightforward manner and agrees with the deformed Widom's
calculus on toric noncommutative manifolds (cf. \cite{Liu:2015aa,LIU2017138})
with respect to a flat connection.
In fact, we take the later point of view 
to take advantage  of the tensor notation.
Another simplification comes from 
replacing the operator $\Delta_\varphi$ by $\Delta_k =k \Delta$ whose symbol
contains only the leading term.
As a result, 
the full expansion of the $b_2$-term in \cref{eq:b2cal-b_2inr} consists only
$6$ terms\footnote{
In \cite{MR3194491,MR3148618}, the computation was carried out in computer
algebra systems. 
 By  carefully looking at the output \cite[\S6]{MR3194491}
\cite[\S4]{MR3148618}, one might notice that, many of the terms,  are indeed
expansions of tensor contractions.  }.
The main result of the section is the full expression of the modular curvature
$R_{\Delta_k}$ in \cref{thm:psecal-R_Delta_k}. In particular,  the
spectral functions $K_{\Delta_k}$ and $H_{\Delta_k}$ are written in terms of 
the hypergeometric family $H_\alpha(\bar z;m)$.

In the last section, we carefully examine some functional relations 
coming from the modular geometry.
The one behind the Gauss-Bonnet theorem on $\T^2_\theta$ only involves one
variable functions.  
We give a combinatorial verification based on the relations among
the Gauss hypergeometric functions 
discussed in \S\ref{sec:diff-and-recursive-relations}. 
Thanks to the reduction formula in \cref{thm:red-to-GF21},
we are able to verify the
Connes-Moscovici type functional relations for all $m$, 
which strongly validates all the algorithms and technical results 
involved in the computation of the algebraic expressions in 
\cref{eq:checkrels-explict-KDelta,eq:checkrels-explict-HDelta}.
Indeed, numerous algebraic mistakes had been  corrected in the process of
achieving the cancellation.
The result (\cref{thm:-funrel-all-m}) is not only independent of, but also
alludes to the main topic of the sequel paper \cite{Liu:2018ab}:
the variational interpretation of the functional
relations. Moreover, the symbolic verification is stronger in the sense that 
it still holds when  $m$  take real values, 
while in the variational interpretation, $m$ stands for the dimension of the
noncommutative torus which is an integer. 


\subsection*{Acknowledgement}
The author would like to thank Alain Connes, Henri Moscovici and Matthias Lesch
for their comments and inspiring conversations, furthermore is greatly indebted
for their consistent support and encouragement on this project.
The author would also like to thank Farzad Fathizadeh for comments and discussions.
Finally, valuable suggestions from the referee are greatly appreciated.

 The work was carried out during his  postdoctoral fellowship at
MPIM, Bonn. The author would like to thank MPI for its marvelous working
environment.
The author would also like to thank IHES for kind support and excellent working
environment during his visit in November and December, 2017.


\section{Functional Calculus and Rearrangement Lemma}
 \label{sec:funcal-reamt-lemma}


 \subsection{Smooth Functional Calculus on $\mathcal A^{\otimes (n+1)}$}
 \label{subsec:smooth-funcal-A-tensor-n-times}

Our exploration of the rearrangement process is built  upon the abstract setting in 
\cite{leschdivideddifference}. 
 Let $A$  be a $C^*$-algebra. Motivated by \cite[Lemma 6.2]{MR3194491}, Lesch
 considered the following contraction map $\cdot: A^{\otimes n+1} \times
 A^{\otimes  n}\to  A$,  
 on elementary tensors:
\begin{align}
    (a_0 \otimes  \cdots \otimes a_n) \cdot 
(\rho_1\otimes  \cdots  \otimes  \rho_n)
    = a_0 \rho_1 a_1
    \cdots a_{n-1} \rho_n  a_n .
    \label{eq:intro-defn-contraction}
\end{align}
It induces a map
\begin{align}
    \label{eq:iota-from-contraction-defn}
    \iota: A^{\otimes n+1} \to  L(A^{\otimes n}, A)
\end{align}
that sends
 elements in $A^{\otimes  n+1}$ to linear operators from 
$A^{\otimes n} \to A$.

Similar to left and right multiplications,
for any $a \in  A$,  it yields $(n+1)$ operators in $L(A^{\otimes n}, A)$  
\begin{align}
    \label{eq:defn-partial-a-j}  
    a^{(j)} =
    \iota \brac{
    1\otimes  \dots\otimes a\otimes \dots\otimes 1 
    }, \, \, \, \, 
    0\le  j \le n,
\end{align}
where $a$ appears in the $j$-th factor.
In other words, $a^{(j)}$ is the multiplication at the $j$-th slot  when acting
on a elementary tensor $ (\rho_1, \ldots, \rho_n) \in
A^{\otimes  n}$. 
Let $h = h^* \in  A$ be a self-adjoint element with the exponential $k = e^h$.
Denote by $\mathbf x_h, \mathbf y_h \in L(A,A)$ the commutator and conjugation
operator respectively:
\begin{align}
    \label{eq:xh-yh-defn}
    \mathbf x_h(a) = [a, h], \, \, 
    \mathbf y_h (a) = e^{\mathbf x_h} = k^{-1} (a) k, \, \,  \forall a \in  A.
\end{align}
For lifted $\mathbf x_h^{(j)}, \mathbf y_h^{(j)} \in  L(A^{\otimes  n}, A)$, 
the superscript $j=1,\ldots,n$ indicates that the operator 
is applied only onto the $j$-th factor of elementary  tensors like $\rho_1
\otimes  \cdots \otimes \rho_n$. 
More precisely, we have
\begin{align}
    \label{eq:xj-y-to-kj-hj}
    \mathbf x_h^{(j)} = -h^{(j-1)} + h^{(j)} , 
    \, \, \, \, 
    \mathbf y_h^{(j)} = ( k^{(j-1)} )^{-1} k^{(j)}, \, \, 
    j=1,\ldots,n.
\end{align}
In other words,
\begin{align}
    \label{eq:kj-hj-to-xj-and-yj}
    h^{(j)} = h^{(0)} + \mathbf x^{(1)}_h + \cdots + \mathbf x_h^{(j)},
    \, \, \, \, 
    k^{(j)} = k^{(0)}  \mathbf y^{(1)}_h  \cdots  \mathbf y_h^{(j)}
    ,
\end{align}
where $j=1,..,n$. With the new notations in hand, 
the shortest  example of rearrangement proceeds like this 
$\rho k = k^{(1)}(\rho) = k \mathbf y_h(\rho)$. 

We recall the continuous functional calculus for  normal elements
given by the inverse of the Gelfand map.  
For a fixed normal element $h \in  A$, that is $[h,h^*]=0$, 
there is a $*$-isomorphism, called the spectral measure,
\begin{align}
    \label{eq:spetral-measure-Psi-single}
  \Psi: C(X_h) \to C^*(1,h),  
\end{align}
where the domain is the $C^*$-algebra of continuous functions on 
$X_h = \op{Spec}\,  h \subset
\mathbb{C}$, the spectrum of $h$ and the range is the   $C^*$-subalgebra of $A$
generated by $h$  and $1$.   
For any $f \in  C(X_h)$, the functional calculus, that is the evaluation of $f$
at $h$ is defined to be 
\begin{align}
  f(h) \defeq \Psi(f) \in C^*(1,h) \subseteq A . 
  \label{eq:f(h)=Psi(f)}
\end{align}

We also need to consider different topological tensor products  
before proceeding to the functional calculus for multivariable functions: 
the maximal $C^*$-tensor product  
$\otimes_\pi$ and the projective tensor product  $\otimes_\gamma$, which are norm
completions of the  algebraic one $A^{\otimes  n}$. 
For $A^{\otimes_\gamma n}$, we use  
\begin{align}
    \label{eq:gamma-norm}
    \norm{a}_\gamma = \inf \sum_s \norm{\alpha_0^{(s)}} \cdots
    \norm{\alpha_n^{(s)}}
\end{align}
where the infimum is taken over all possible representations of $a$ as finite
sums of elementary tensors $a = \sum_s \alpha_0^{(s)} \otimes  \cdots  \otimes
\alpha_n^{(s)}$. 
For $A^{\otimes_\pi n}$, the norm is the sup among  all $*$-representations of
$A^{\otimes  n}$:
\begin{align}
    \norm{a}_\pi = \sup \norm{ \rho(a)}.
    \label{eq:pi-norm}
\end{align}

The two norms in \cref{eq:pi-norm,eq:gamma-norm} satisfy $\norm{\cdot }_\pi \le
\norm{\cdot }_\gamma$. Hence the identity map on the algebraic tensor product
extends to a continuous surjective $*$-homomorphism 
\begin{align}
  p^\gamma_\pi: A^{\otimes_\gamma n+1} \to  A^{\otimes_\pi n+1} . 
  \label{eq:p-gamma-to-pi}
\end{align}
Accordingly,
$A^{\otimes_\pi n+1}$  is a quotient of $A^{\otimes_\gamma n+1}$.
The two   topological tensor products come with  somehow mutually
exclusive features that
are essential for constructing functional calculi.
The maximal $C^*$-tensor product $\otimes_\pi$ has the advantage that
the completion $A^{\otimes_\pi  n}$ is still a $C^*$-algebra. 
For the  $C^*$-algebra  $C(X_h)$ in \cref{eq:f(h)=Psi(f)}, the maximal
$(n+1)$-fold tensor product is isomorphic to the algebra of continuous
functions on the $(n+1)$-fold Cartesian
product of $X_h$: $C(X_h)^{\otimes_\pi  n+1} \cong C(X_h^{n+1})$.
Similar to \cref{eq:f(h)=Psi(f)},
for $h^{(0)}, \ldots, h^{(n)} \in  A^{\otimes n+1}$\footnote{
    Strictly speaking, here $h^{(j)}$ means the elementary tensor 
    $1\otimes \cdots  h \cdots \otimes 1$ before applying the $\iota$ map, 
    cf. \cref{eq:defn-partial-a-j}. 
}, 
we have the joint spectral measure (a $*$-isomorphism) 
\begin{align}
     \Psi_\pi: C(X_h^{n+1}) \to  C^*(1, h^{(0)}, \ldots, h^{(n)})
\subset A^{\otimes_\pi n+1} ,
     \label{eq:Psi-pi}
 \end{align}
where the $C^*$-algebra generated by  $1,h^{(0)}, \ldots, h^{(n)}$ is sitting
inside the maximal $C^*$-tensor product.  
The drawback of the maximal $C^*$-tensor product is that
whether the multiplication map $\mu_n: A^{\otimes  n} \to  A$ can be
continuously  extended to $A^{\otimes_\pi  n} \to  A$ is still inconclusive  
\footnote{
The author's knowledge on this issue is limited to the discussion in 
\cite[\S3]{leschdivideddifference}
}. 
thus we don't know whether the map $\iota$ in \cref{eq:iota-from-contraction-defn}
extends continuously to 
$A^{\otimes_\pi n+1} \to L_{\mathrm{cont}} (A^{\otimes_\pi n}, A)$. 
To make use of such features, we turn to 
the projective one $A^{\otimes_\gamma n+1}$  that behaves well when
extending the multiplication map $\mu_{n+1}$, in particular, $\iota$ in 
\cref{eq:iota-from-contraction-defn} admits a continuous extension
\begin{align}
    \label{eq:iota-cont-version}
    \iota: A^{\otimes_\gamma n+1} \to  L_{\mathrm{cont}} (A^{\otimes_\pi n}, A)
    .
\end{align}
Nevertheless $A^{\otimes_\gamma n}$  is only a Banach $*$-algebra and almost
never a $C^*$-algebra.

The two features coexist regarding to
smooth functional calculus, thanks to the nuclearity of the
Fr\'echet topology of $ C^\infty(U) $, where   
$U \supset X_h $ is an open set in $\R$  containing the spectrum of 
$h =h^*\in  A$.
The nuclearity
 states that the projective $\otimes_\gamma$ and the injective
$\otimes_\epsilon$ tensor products agree and they both yield:
\begin{align*}
    C^\infty(U)^{\otimes_\gamma n+1} \cong C^{\infty}(U^{n+1}) \cong 
C^\infty(U)^{\otimes_\epsilon n+1}.
\end{align*}
The injective feature of $C^{\infty}(U^{n+1})$
allows us to approximate multivariable functions $f(x_0, \ldots, x_n)$ by those of
the form of separating variables. 
 In more detail,
the algebraic map
$C^\infty(U)^{\otimes n+1} \to  C^\infty(U^{n+1})$, where 
$f_0 \otimes
\cdots  \otimes f_n \to  f$ with $f(x_0, \ldots, x_n) = f(x_0) \cdots  f(x_n) \in
C^\infty(U^{n+1})$ extends by continuity to the isomorphism of 
$C^\infty(U)^{\otimes_\epsilon n+1} \to  C^\infty(U^{n+1})$. 
By performing the projective completion onto the algebraic map
\begin{align}
    \label{eq:Psi-otimes-n+1}
    \Psi^{\otimes n+1}: C^\infty(U)^{\otimes n+1} \to A^{\otimes  n+1}:
    f_0\otimes \cdots \otimes  f_n \to  f_0(h) \otimes \cdots  \otimes f_n(h),
\end{align}
we obtain a continuous map landed in $A^{\otimes_\gamma  n+1}$: 
\begin{align}
    \label{eq:Psi-gamma-defn}
    \Psi_\gamma: C^\infty(U^{n+1}) \to A^{\otimes_\gamma  n+1}.
\end{align}
\begin{defn}[Smooth functional calculus]
    \label{defn:f-gamma}
    For any $f \in  C^\infty(U^{n+1})$ and
    self-adjoint $h \in A$ , 
    the smooth functional calculus denoted by 
    $f_\gamma(h^{(0)}, \ldots, h^{(n)})$  takes value in operators from 
$A^{\otimes_\gamma n}$ to $A$:
    \begin{align}
        \label{eq:f-gamma=Psi-gamma(f)}
        f_\gamma(h^{(0)}, \ldots, h^{(n)}) \defeq  
        (\iota\circ \Psi_\gamma)(f) 
        \in L_{\mathrm{cont}} (A^{\otimes_\gamma n},A)  
        ,
    \end{align}
    where $\iota$ and $\Psi_\gamma$ are given in 
    \cref{eq:iota-cont-version,eq:Psi-gamma-defn} respectively. 
    Later, we may use the following integral notation for the spectral measure
    \begin{align}
        \label{eq:dE-gamma-x}
        \Psi_\gamma(f) \defeq \int_{U^{n+1}} f(\bar x) dE^{\gamma}(\bar x) ,
    \end{align}
    where $dE^\gamma(\bar x)$ is the induced distribution (valued in the Banach space
    $A^{\otimes_\gamma n+1}$) on $U^{n+1}$.  
\end{defn}

To summarize the long discussion of the two functional calculi $\Psi_\pi$ and
$\Psi_\gamma$ (cf.  \cref{eq:Psi-pi,eq:Psi-gamma-defn}),
we mention two properties starting with  the compatibility.
\begin{prop}[Lesch]
Let $\op{Res}^U_{X_h}: C^\infty(U^{n+1}) \to  C^\infty(X_h^{n+1})$ be the
restriction map $f \to  f|_{X_h^{n+1}}$. We have 
$\forall  f \in
C^\infty(U^{n+1})$, 
\begin{align}
    \label{eq:comaptibility-smooth-cont}
    \Psi_\pi \circ \op{Res}^U_{X_h} (f) = p^\gamma_\pi \circ \Psi_\gamma(f)
    \in  A^{\otimes_\pi n+1},
\end{align}
where $p^\gamma_\pi$ is the natural projection in \cref{eq:p-gamma-to-pi}. 
\end{prop}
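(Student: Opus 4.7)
The plan is a two-step density-and-continuity argument: first verify the equality on the algebraic tensor product $C^\infty(U)^{\otimes n+1}$, where both sides admit explicit descriptions in terms of the single-variable Gelfand map $\Psi$ of \cref{eq:spetral-measure-Psi-single}; then extend by continuity using the nuclearity isomorphism $C^\infty(U)^{\otimes_\gamma n+1} \cong C^\infty(U^{n+1})$ recalled just above the statement, which guarantees that $C^\infty(U)^{\otimes n+1}$ is dense in $C^\infty(U^{n+1})$.

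For the first step I would take an elementary tensor $f = f_0 \otimes \cdots \otimes f_n$ with $f_j \in C^\infty(U)$. By the very definition of $\Psi_\gamma$ as the projective completion of $\Psi^{\otimes n+1}$ in \cref{eq:Psi-otimes-n+1}, one has $\Psi_\gamma(f) = f_0(h) \otimes \cdots \otimes f_n(h)$ in $A^{\otimes_\gamma n+1}$, and $p^\gamma_\pi$ sends this elementary tensor to the same formal tensor viewed in $A^{\otimes_\pi n+1}$. On the other side, $\op{Res}^U_{X_h}(f) = f_0|_{X_h} \otimes \cdots \otimes f_n|_{X_h}$ corresponds, under the identification $C(X_h)^{\otimes_\pi n+1} \cong C(X_h^{n+1})$, to the function of several variables obtained by separation of variables on $X_h^{n+1}$. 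Since $\Psi_\pi$ is a $\ast$-homomorphism that slot-wise restricts to the Gelfand map (using $\Psi(f_j|_{X_h}) = f_j(h) \in C^*(1,h)$), we get $\Psi_\pi \circ \op{Res}^U_{X_h}(f) = f_0(h) \otimes \cdots \otimes f_n(h)$ in $A^{\otimes_\pi n+1}$, matching the right-hand side.

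For the second step I would check that both sides are continuous from $C^\infty(U^{n+1})$ to $A^{\otimes_\pi n+1}$. The right-hand side is continuous because $\Psi_\gamma$ is continuous by construction and $p^\gamma_\pi$ is a norm-contractive $\ast$-homomorphism. For the left-hand side, the restriction $\op{Res}^U_{X_h}$ is continuous from the Fr\'echet topology of $C^\infty(U^{n+1})$ to $C(X_h^{n+1})$ (bounded by the $C^0$-seminorm on any compact neighborhood of $X_h^{n+1}$ in $U^{n+1}$), and $\Psi_\pi$ is a $\ast$-isomorphism of $C^*$-algebras, hence isometric. Combined with the density noted above, agreement on elementary tensors forces agreement on all of $C^\infty(U^{n+1})$.

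The main obstacle is purely bookkeeping: one has to navigate between the three completions $\otimes_\pi, \otimes_\gamma, \otimes_\epsilon$ and be honest about the domains of $\Psi_\pi$ and $\Psi_\gamma$. The conceptual point is that, on the dense subalgebra $C^\infty(U)^{\otimes n+1}$, both routes are uniquely determined by the single-variable functional calculus $\Psi$, so the identity is really a statement that two continuous extensions of the same algebraic map must agree. Once this is formulated cleanly, the argument is essentially routine.
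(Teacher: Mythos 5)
Your argument is correct, and it is essentially the expected one: the paper itself offers no proof of this proposition, deferring entirely to \cite[\S3]{leschdivideddifference}, and the density-plus-continuity scheme you describe is precisely what underlies that reference. Your verification on elementary tensors is right: $p^\gamma_\pi\circ\Psi_\gamma$ sends $f_0\otimes\cdots\otimes f_n$ to $f_0(h)\otimes\cdots\otimes f_n(h)$ viewed in $A^{\otimes_\pi n+1}$, while $\Psi_\pi\circ\op{Res}^U_{X_h}$ sends the corresponding product function to $f_0(h^{(0)})\cdots f_n(h^{(n)})$, and these agree because $a\mapsto a^{(j)}$ is a unital $*$-homomorphism and hence commutes with continuous functional calculus. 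Two small points worth making explicit: first, for the continuity of the left-hand side you do not actually need $\Psi_\pi$ to be injective (whether the joint spectrum of $h^{(0)},\ldots,h^{(n)}$ fills out all of $X_h^{n+1}$ is a separate question) --- being a $*$-homomorphism between $C^*$-algebras already makes it contractive, which is all the argument requires; second, the density of the algebraic tensor product $C^\infty(U)^{\otimes n+1}$ in $C^\infty(U^{n+1})$ should be attributed to the nuclearity isomorphism $C^\infty(U)^{\otimes_\gamma n+1}\cong C^\infty(U^{n+1})$ together with the fact that an algebraic tensor product is dense in any of its completions, exactly as you do. With those caveats the proof is complete and routine, as you say.
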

\begin{proof}
    See \cite[\S3]{leschdivideddifference}.  
\end{proof}

The other is a Fubini type result. When the spectral function 
is given as an integral:
\begin{align}
    \label{eq:F-bar-x}
   \bar x \in  U^{n+1}\to 
    F(\bar x) = 
    \int_{\mathcal B} f(p,\bar x)d\nu_p , 
\end{align}
where $(\mathcal B, \nu)$ is a Borel space. We expect that the evaluations 
 $\bar x =(h^{(0)}, \ldots, h^{(n)})$ 
 before or after the $\int_{\mathcal B}$ yield the same result:
\begin{align*}
    \Psi_\gamma(F)=  F_\gamma(h^{(0)}, \ldots, h^{(n)}) = 
    \int_{\mathcal B}
    f_\gamma(p,\cdot )
    d\nu_p = 
    \int_{\mathcal B} f(p,h^{(0)}, \ldots, h^{(n)})d\nu_p , 
\end{align*}
where the right hand side is  a Bochner integral valued in
$A^{\otimes_\gamma n+1}$. 
After replacing the left hand side with the integral notation 
\cref{eq:dE-gamma-x}, we see that it is an issue of  switching  order of
integrations:
\begin{align}
    \label{eq:switching-dE-dp}
    \int_{U^{n+1}} \int_{\mathcal B} f(p,\bar x) d\nu_p dE^{\gamma}(x) 
    =
    \int_{\mathcal B} 
    \int_{U^{n+1}}
    f(p,\bar x) 
    dE^{\gamma}(x) 
    d\nu_p .
\end{align}
The following two conditions on $f$ are sufficient for the validation of
\cref{eq:switching-dE-dp} (cf. \cite[Theorem 3.4]{leschdivideddifference}):
\begin{enumerate}
    \item $f:\mathcal B \times U^{n+1} \to \mathbb{C}$ is continuous in
        $(p,\bar x)$ and smooth in $\bar x$;
    \item For any compact set $K\subset U$, we have the integrability for all
        derivatives (in $\bar x$), namely, for any multiindex $\alpha$, 
        \begin{align}
            \label{eq:integrability-f-p-x}
            \int_{\mathcal B} 
            \sup_{\bar x \in  K}
            \abs{ \partial^{\alpha}_{\bar x}f(p,x)} d\nu_p < \infty .
        \end{align}
\end{enumerate}
For the continuous functional calculus $\Psi_\pi$, we can ask for the corresponding
version of \cref{eq:switching-dE-dp} and only the integrability  for $f$ is
required. In fact, integrability for all derivatives as in
\cref{eq:integrability-f-p-x} is used to guarantee the converges of the
integral  $\int_{\mathcal B} f(p, \cdot ) d\nu_p$ in the Fr\'echet topology of
$C^{n+1}(U)$, that is the smoothness of $F(\bar x)$ in \cref{eq:F-bar-x}.   

For readers only concerning the applications of the
lemma and prefer  avoiding completions of the algebraic tensor product, there is 
a much more elementary way to define the rearrangement operators using Schwartz
functions.
\begin{defn}
    \label{defn:schwartz-funcal}
For a fixed self-adjoint $h \in  A$ with $k =e^h$, we have a  map from the space of
Schwartz functions of $n$-variables  to the space of linear maps from
$A^{\otimes  n}\to  A$: 
\begin{align*}
    \Psi_{\mathscr S}:\mathscr S(\R^n) \to  L(A^{\otimes  n}, A):
    f \to  \Psi_{\mathscr S}(f) \defeq
    f_{\mathscr S}(\mathbf x_h^{(1)}, \ldots, \mathbf x_h^{(n)}),
\end{align*}
where on elementary tensors, the operator is evaluated as below:
\begin{align}
    \label{eq:f(bfx_1-bfx_n)-defn}
    &\, \, 
   f_{\mathscr S}(\mathbf x_h^{(1)}, \ldots, \mathbf x_h^{(n)})
    \brac{ \rho_1 \otimes  \cdots  \otimes  \rho_n} 
    \\
=&\, \,  
    \int_{\R^{n}} \hat f(\xi) 
    \brac{
        e^{i\xi_1 \mathbf x_h^{(1)}} \cdots e^{i \xi_n \mathbf x_h^{(n)}}
    }
    \brac{ \rho_1 \otimes  \cdots  \otimes  \rho_n} 
  =
    \int_{\R^{n}} \hat f(\xi) 
    \mathbf y_{h}^{i\xi_1}(\rho_1) \cdots 
    \mathbf y_{h}^{i\xi_n}(\rho_n) d\xi,
    \nonumber
\end{align}
where $f$  and $\hat{ f}$ are related  in the following way:
\begin{align}
    \label{eq:f=int-hat(f)}
    f( x) = \int_{\R^{n}} \hat f(\xi) e^{i \xi \cdot x} d\xi.
\end{align}
By a change of variable $y=e^x$, we can define functional calculus for the
conjugation operators
$f_{\mathscr S}(\mathbf y_h^{(1)},\ldots, \mathbf y^{(n)}_h) \in  L(A^{\otimes
n},A)$ by
imposing
\begin{align*}
  f_{\mathscr S}(\mathbf y_h^{(1)},\ldots, \mathbf y^{(n)}_h) \defeq 
  (f_{\log})_{\mathscr S}
  (\mathbf x_h^{(1)}, \ldots, \mathbf x_h^{(n)})
\end{align*}
whenever $f_{\log}$ belong to $\mathscr S(\R^n)$, where
$ f_{\log}(x_1, \ldots, x_n) = f(e^{x_1}, \ldots, e^{x_n})$.
\end{defn}
\begin{rem}
    \hspace{2cm} \\
    \begin{enumerate}
        \item 
To see \cref{defn:schwartz-funcal} (or \cref{eq:f=int-hat(f)})
 is a special case of the smooth functional calculus, 
we have, for $h \in  A$ self-adjoint and $h^{(0)}, \ldots, h^{(n)} \in
A^{\otimes n+1}$  
\begin{align*}
    \Psi_\gamma \brac{
        \int_{\R^{n+1}} \hat{f}(\xi) e^{i x \cdot  \xi} d\xi
    } 
    & =
    \int_{\R^{n+1}} \hat{f}(\xi)
    \Psi_\gamma \brac{ 
        e^{i x_1 \cdot  \xi_1}  \cdots 
        e^{i x_n \cdot  \xi_n}  
    } d\xi \\
    &=  
    \int_{\R^{n+1}} \hat{f}(\xi)
    (k^{(1)})^{i\xi_1} \cdots 
    (k^{(n)})^{i\xi_n} d\xi
\end{align*}
where $k = e^h$ as before. Notice that the Fubini property
\cref{eq:switching-dE-dp} is applied and the integrability
\cref{eq:integrability-f-p-x} follows from the Schwartzness of $f$. 

\item Since the spectrum of $\mathbf x_h$ is always bounded, the Schwartzness
    is guaranteed. In fact,
    for any smooth
    $f(x_1, \ldots, x_n)$, we can replace $f$ by $\psi f$ where $\psi$ is
    a suitable   cutoff function $\psi$ so that $\psi
    f$ has compact support which contains the joint spectrum. Due to the
    compatibility \cref{eq:comaptibility-smooth-cont},
    the result is
    independent of $\psi$ as long as $\op{Res}^U_{X_h}(\psi f)$ is the same.      
    \end{enumerate}
\end{rem}

\subsection{Rearrangement Lemma}
In \S\ref{subsec:application-of-rearr-lem}, one encounters the following 
family of integrals:
\begin{align*}
    \int_0^\infty G_\alpha(r \bar s) r^d dr
\end{align*}
where $\alpha = (\alpha_0, \ldots, \alpha_n) \in  \Z^{n+1}_+$ is a multiindex,
$\bar s= (s_0, \ldots, s_n) \in (0,\infty)^{n+1}$, $d>0$ depends on the
dimension and the index of the aimed heat coefficient, 
and  
  \begin{align}
      \label{eq:Galpha-as-a-contour-int}
  G_{\alpha_0,\dots,\alpha_n} (\bar s) 
  = 
 (2\pi i)^{-1}
\int_C
e^{-\lambda}
\brac{
    \prod_{j=0}^{n}   (s_j - \lambda )^{-\alpha_j}
}
d\lambda,
  \end{align}
  where the contour $C$ is the same one used in the holomorphic functional
  calculus devotion of the heat operator.
  The argument  given in \cite[\S6]{MR3194491} relies on exchanging the order
  of integration  $d\lambda dr \to  dr d\lambda$ which is special to
  noncommutative two tori, see the remark at the end of
  \S\ref{subsec:application-of-rearr-lem} for more details.

  The first contribution of the paper is to show that the integrals are indeed
  some hypergeometric functions.
  For non-experts of special functions, the hypergeometric features are
  displayed as the appearance of 
equations of the boundary hypersurfaces:
\begin{align}
    \label{eq:hypersurfaces-of-n-simplex}
  1 - \sum_1^n u_j =0, u_1= 0, \ldots, u_n=0  
\end{align}
of the standard $n$-simplex $\triangle^n \subset \R^n$: 
\begin{align}
    \label{eq:n-simplex-para-defn}
    \triangle^n = \{
        u=(u_1,\ldots, u_n) \in  \R^n: \, \sum_1^n u_j \le 1, 
        u_1\ge  0, \ldots, u_n \ge 0
    \} .
\end{align}
For $\alpha = (\alpha_0, \ldots, \alpha_n) \in  \Z_+^{n+1}$ and
$s =(s_0, \ldots, s_n) \in (0,\infty)^{n+1}$, 
we introduce the following weighted product $\omega_\alpha(u)$ 
and the weighted sum $B_n(\bar s,u)$ of the hypersurface equations
listed  \cref{eq:hypersurfaces-of-n-simplex}:
\begin{align}
    \label{eq:omega-alpha-u-defn}
    \omega_\alpha(u) & = 
    \frac{1}{\Gamma(\alpha_0) \cdots  \Gamma(\alpha_n)}
    (1 - \sum_1^n u_j)^{\alpha_0-1} \prod_{s=1}^{n}
    u_s^{\alpha_s-1} ,
    \\
    B_n(\bar s,u) &= s_0(1-\sum_{l=1}^n u_l) + \sum_{l=1}^n s_l u_l
    \label{eq:B_n(s)-defn}
    .
\end{align}

\begin{lem}
\label{lem:hgeofun-contourint-nvar-case}
Keep the notations. 
 Let us take contour $C$ in \cref{eq:Galpha-as-a-contour-int} to be the imaginary
 axis oriented from $-i\infty$ to $i\infty$:    
  \begin{align}
       \label{eq:Galpha-from-contour-int}
  G_{\alpha_0,\dots,\alpha_n} (\bar s) 
  = 
 (2\pi)^{-1}
\int_{-\infty}^\infty e^{-i x}
\brac{
    \prod_{j=0}^{n}   (s_j - ix)^{-\alpha_j}
}
dx.
  \end{align}
  It admits another integral representation:
\begin{align}
    \label{eq:hggeo-Ga0toan}
  G_{\alpha_0,\dots,\alpha_n} (\bar s) 
  = 
\int_{\triangle^n} \omega_\alpha(u)
e^{-B_n(\bar s,u)}
    du ,
\end{align}
where $\omega_\alpha(u)$ and $B_n(\bar s,u)$ are
 defined in \cref{eq:omega-alpha-u-defn,eq:B_n(s)-defn} respectively.
\end{lem}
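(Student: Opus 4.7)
My plan is to recognize each factor $(s_j - ix)^{-\alpha_j}$ as a Fourier transform of an explicit one-sided exponential, then identify the product as the Fourier transform of a convolution, and finally apply Fourier inversion at the point $1$ to obtain the simplex integral.

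The starting point is the elementary identity
\begin{equation*}
  (s_j - ix)^{-\alpha_j} = \frac{1}{\Gamma(\alpha_j)} \int_0^\infty t_j^{\alpha_j - 1} e^{-s_j t_j} e^{i x t_j}\, dt_j,
\end{equation*}
valid for $\alpha_j > 0$ and $s_j > 0$. This identifies $(s_j - ix)^{-\alpha_j}$ as the Fourier transform (in $x$) of the one-sided function $f_j(t) = \mathbf 1_{t>0}\, t^{\alpha_j - 1} e^{-s_j t}/\Gamma(\alpha_j)$. Taking the product over $j$ and using that products of Fourier transforms correspond to convolutions of the originals, one has
\begin{equation*}
  \prod_{j=0}^{n} (s_j - ix)^{-\alpha_j} = \widehat{(f_0 * f_1 * \cdots * f_n)}(x).
\end{equation*}

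Next I would apply Fourier inversion to recover $G_{\alpha_0,\dots,\alpha_n}(\bar s)$ as $(f_0 * \cdots * f_n)(1)$. Expanding the convolution at the point $1$ gives an integral over the hyperplane $t_0 + t_1 + \cdots + t_n = 1$ with $t_j \ge 0$, that is, over the standard $n$-simplex. Parametrizing via $u_j = t_j$ for $j = 1, \dots, n$ and eliminating $t_0 = 1 - \sum_{l=1}^n u_l$ yields exactly
\begin{equation*}
  G_{\alpha_0,\dots,\alpha_n}(\bar s) = \int_{\triangle^n} \frac{(1-\sum u_l)^{\alpha_0 - 1} \prod_{s=1}^n u_s^{\alpha_s - 1}}{\Gamma(\alpha_0)\cdots\Gamma(\alpha_n)} e^{- s_0(1-\sum u_l) - \sum_{l\ge 1} s_l u_l}\, du,
\end{equation*}
which is precisely the claimed formula in the notation of \cref{eq:omega-alpha-u-defn,eq:B_n(s)-defn}.

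The main subtlety is the justification of Fubini/Fourier inversion. Absolute convergence of the resulting double integral with respect to $(x, t_0, \dots, t_n)$ fails, because the oscillatory integral $\int e^{ix(\sum t_j - 1)} dx$ is only distributional. I would handle this by first establishing the result under the assumption $\sum_j \alpha_j \ge 2$, where $\prod_j (s_j - ix)^{-\alpha_j}$ is absolutely integrable on the imaginary axis and each $f_j \in L^1$, so classical Fourier inversion and the convolution formula apply rigorously. The restriction is then removed by analytic continuation in the parameters $\alpha_j$: both sides of the identity are holomorphic functions of $(\alpha_0, \dots, \alpha_n) \in \{\Re\alpha_j > 0\}^{n+1}$ (the left side by the decay of the integrand for fixed $s_j > 0$ and the contour being pushed off from singularities, the right side manifestly), and they agree on the open set $\sum \alpha_j > 1$, hence everywhere. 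An alternative, equally clean route is to insert a Gaussian regulator $e^{-\epsilon x^2}$ in the contour integral and pass to the limit $\epsilon \to 0^+$ using dominated convergence on the simplex side.
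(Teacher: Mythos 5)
Your proposal is correct and follows essentially the same route as the paper: both identify each factor $(s_j-ix)^{-\alpha_j}$ as the Fourier transform of the one-sided function $t^{\alpha_j-1}e^{-s_jt}\mathbf 1_{t>0}/\Gamma(\alpha_j)$ (the paper obtains this by a residue computation plus scaling, you by the Gamma-integral identity) and then convert the product into the convolution $(f_0*\cdots*f_n)(1)$, which is exactly the simplex integral. Your extra care about when Fourier inversion applies (treating $\sum\alpha_j\ge 2$ first and then continuing analytically or regularizing) is a reasonable tightening of a point the paper passes over, but it does not change the argument.
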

\begin{rem}
 When $n=1$,  the  right hand side of  \cref{eq:hggeo-Ga0toan} is a confluent
    hypergeometric function:
    \begin{align}
        (2\pi)^{-1} \int_{-\infty}^{\infty} e^{-ix} 
        (A - ix)^{-a} ( B - ix)^{-b} dx  =  \frac{e^{-A}}{ \Gamma(a+b)} 
        \pFq11(a;a+b;B-A),
        \label{eq:hgeofun-F11-heat-contourint}
    \end{align}
    where the function $ \pFq11(a;b;B-A)$ has the following integral
    representation:
    \begin{align}
        \pFq11(a;b;z)= \frac{
            \Gamma(a-b) \Gamma(b)
        }{ \Gamma(a)}\int_0^1 t^{a-1} (1-t)^{b-a-1} e^{zt} dt,
        \label{eq:hgeofun-F11-int-rep-defn}
    \end{align}
     which appeared   in the
     heat kernel related work \cite{MR1156063} and \cite{MR1843855}. This was the
     motivation in the early stage that brought the author's attention to 
     hypergeometric functions. 
 \end{rem}
\begin{proof}
    For the case $n=0$, a residue computation  shows that
    $\forall  a \in  \Z_{+}, s>0$,  
    \begin{align*}
        (2\pi)^{-1}
        \int_{-\infty}^\infty e^{-ix} (s-ix)^{-a} dx =
        \frac{e^{-s}}{\Gamma(a)} \mathsf H(s) ,
    \end{align*}
where $\mathsf H(\eta)$ is the  Heaviside step function which takes value $1$ on
$(0,\infty)$ and $0$ on $(-\infty,0)$.    
Let $\mathcal F_x$ denote the Fourier transform in $x$,  
    by a change of variable ($x\to  x \eta$) argument, we have for $s>0$:
\begin{align*}
    \mathcal F_x \brac{ (s-ix)^{-a} } (\eta) = 
    (2\pi)^{-1} 
    \int_{-\infty}^\infty e^{-ix \eta} (s-ix)^{-a} dx =
        \eta^{a-1} e^{-s\eta} \mathsf H(\eta) .
\end{align*}
The result follows from the well-known fact that Fourier transform turns
pointwise multiplication into convolution ($*$)\footnote{
The author would like to thank the referee for pointing out 
    the convolution observation. 
}.  In detail, 
let $f_j(\eta)  =\eta^{\alpha_j-1} e^{-s_j\eta} \mathsf H(\eta)$ 
be the Fourier transforms of $(s_j - ix)^{-\alpha_j}$ 
where $j=0,\ldots,n$, then  
\begin{align*}
    &\, \, 
    (2\pi)^{-1}
\int_{-\infty}^\infty e^{-i x}
(s_0 -ix)^{-\alpha_0} \cdots (s_n -ix)^{-\alpha_n} dx =
\mathcal F_x \brac{
\prod_{j=1}^{n} (s_j-ix)^{-\alpha_j}  
} (1)
\\
    =&\, \, (f_0 * \cdots  * f_n) (1)
    =
    \int_{\R^{n}} f_0(1-\sum_1^n u_j) 
    \brac{\prod_{1}^{n} f(u_j)} du 
    \\
    =&\, \, \int_{\R^{n}}
    \omega_\alpha(u) e^{
    -s_0(1-\sum_{l=1}^n u_l) - \sum_{l=1}^n s_l u_l
    } 
    \sbrac{
    \mathsf H \brac{ 1-\sum_1^n u_j} 
    \brac{\prod_{1}^{n} \mathsf H(u_j)} 
    }
    du
    \\
    = &\, \, 
\int_{\triangle^n} \omega_\alpha(u)
    \cdot \exp\brac{
        -s_0(1-\sum_{l=1}^n u_l) - \sum_{l=1}^n s_l u_l
    } du .
\end{align*}
\end{proof}


    We fix a self-adjoint element $h = h^* \in  A$ in a $C^*$-algebra $A$ and
    denote the exponential by $k = e^h$. Let $h^{(l)}$,  $k^{(l)}$, $\mathbf
    x^{(l)}$ and $\mathbf y^{(l)}$, $0\le l\le n$,  be the operators 
introduced in 
$\S\ref{subsec:smooth-funcal-A-tensor-n-times}$ with additional equivalent
family:
$\mathbf{\vec z} = (\mathbf z^{(1)}, \ldots, \mathbf z^{(n)})$ where 
\begin{align}
    \label{eq:mathbf-z^(n)-defn}
    \mathbf z^{(l)} = 1- \mathbf y^{(1)} \cdots  \mathbf y^{(l)}, \, \, \, \, 
    l=1,\ldots,n.
\end{align}
\begin{thm}[Rearrangement Lemma]
    \label{thm:rarr-lem}
For a multiindex $\alpha = (\alpha_0, \ldots, \alpha_n) \in  \Z_+^{n+1}$ and
$d>0$ , we have 
the following equality for the two Bochner integrals valued in the Banach
$*$-algebra $A^{\otimes_\gamma n+1}$: 
\begin{align}
    \label{eq:contourint-to-tild-H-alpha}
    (2\pi)^{-1}
    \int_0^\infty 
    \int_\infty^\infty e^{-ix}
    \prod_{l=0}^{n}  
    \brac{
        k^{(l)} r - ix
    }^{-\alpha_l}
    r^{d}
    dx dr = k^{-(d+1)}
  (\tilde  H_\alpha)_\gamma (\mathbf{\bar z};d),
\end{align}
where $k^{-(d+1)} \defeq (k^{(0)})^{-(d+1)}$ is the left multiplication and the
spectral function is given by a hypergeometric integral:
\begin{align}
    \label{eq:tild-H-alpha-z-d-defn}
    \tilde  H_\alpha( \bar z;d) = \Gamma(d)
    \int_{\triangle^n} \omega_\alpha(u)
    \brac{
        1-  \bar z \cdot  u
    }^{-(d+1)}
    du,  \, \, \, \, 
    \bar z \in  \R^{n} ,
\end{align}
where $\omega_\alpha(u)$ is the weighted product defined in
\cref{eq:omega-alpha-u-defn}. 
\end{thm}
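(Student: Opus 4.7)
The proof reduces to applying \cref{lem:hgeofun-contourint-nvar-case} inside the smooth functional calculus $\Psi_\gamma$, followed by a Gamma integration in $r$ and an algebraic rewrite relating $k^{(l)}$ to $\mathbf{\bar z}$.

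\textbf{Step 1 (contour to simplex).} Treating the commuting tuple $(k^{(0)}, \ldots, k^{(n)})$ as joint values of the spectral measure via $\Psi_\gamma$ from \cref{defn:f-gamma}, one sets $s_l := k^{(l)} r$ as scalar placeholders. Then \cref{lem:hgeofun-contourint-nvar-case} rewrites the inner contour integral in $x$ as $\int_{\triangle^n} \omega_\alpha(u)\, e^{-r A(u)}\, du$, where
\[
A(u) \defeq k^{(0)}\bigl(1 - \textstyle\sum_{l=1}^{n} u_l\bigr) + \sum_{l=1}^{n} k^{(l)} u_l.
\]
Since $k>0$ and $u$ has nonnegative barycentric coordinates, $A(u)$ is a positive element under the joint functional calculus.

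\textbf{Step 2 (Gamma integral).} Apply the Fubini property \cref{eq:switching-dE-dp} to swap the $dr$-integration past the simplex integration and the spectral integration. For each fixed $u$, the Gamma integral $\int_0^\infty r^d e^{-r A(u)}\, dr = \Gamma(d+1)\, A(u)^{-(d+1)}$ is evaluated via functional calculus using positivity of $A(u)$. The left-hand side of \cref{eq:contourint-to-tild-H-alpha} therefore reduces to $\Gamma(d+1)\int_{\triangle^n} \omega_\alpha(u)\, A(u)^{-(d+1)}\, du$.

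\textbf{Step 3 (rewrite in $\mathbf{\bar z}$).} From \cref{eq:kj-hj-to-xj-and-yj} and \cref{eq:mathbf-z^(n)-defn}, $k^{(l)} = k^{(0)}\mathbf y^{(1)}\cdots\mathbf y^{(l)} = k^{(0)}(1 - \mathbf z^{(l)})$. Substituting and using $\sum u_l$-cancellation gives
\[
A(u) = k^{(0)}\Bigl(1 - \textstyle\sum_{l=1}^{n}\mathbf z^{(l)} u_l\Bigr) = k^{(0)}\bigl(1 - \mathbf{\bar z}\cdot u\bigr).
\]
Under the contraction map $\iota$ of \cref{eq:iota-from-contraction-defn}, the multiplication $k^{(0)}$ acts on slot $0$ whereas each $\mathbf z^{(l)}$ acts on slot $l\ge 1$, so they commute. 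Hence $A(u)^{-(d+1)} = (k^{(0)})^{-(d+1)}(1-\mathbf{\bar z}\cdot u)^{-(d+1)}$, and factoring $k^{-(d+1)} = (k^{(0)})^{-(d+1)}$ outside the simplex integral yields exactly the right-hand side of \cref{eq:contourint-to-tild-H-alpha}.

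\textbf{Main obstacle.} The technical heart is the rigorous justification of the Fubini interchanges in this Banach-valued setting and the legality of applying $\Psi_\gamma$ to the integrand. Concretely, to invoke \cref{eq:switching-dE-dp} one must verify the integrability condition \cref{eq:integrability-f-p-x}: on any compact neighborhood $K$ of the joint spectrum of $(k^{(0)},\ldots,k^{(n)})$, all partial derivatives in $\bar s$ of $e^{-ix}\prod_{l}(s_l - ix)^{-\alpha_l}\, r^d$ must be uniformly integrable in $(x,r)$. Decay in $|x|\to\infty$ follows from $\sum_l \alpha_l \ge 2$; integrability at $r\to 0$ needs $d > -1$ (hence $d > 0$ suffices); integrability at $r\to\infty$ rests on a uniform positive lower bound on $\mathrm{Re}\, s_l$ over $K$, guaranteed by positivity of $k$. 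Once these uniform bounds are secured, the Gamma integral and the exchange of spectral measure with the $r$- and $u$-integrations are all routine.
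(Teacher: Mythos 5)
Your proposal is correct and follows essentially the same route as the paper's own proof: apply \cref{lem:hgeofun-contourint-nvar-case} to the inner contour integral, interchange the integrations, evaluate the radial Gamma integral using positivity of $B_n(\vec k,u)$ under the functional calculus, and rewrite in terms of $\mathbf{\bar z}$ by factoring $(k^{(0)})^{-(d+1)}$ to the left. The only discrepancies are minor: your (correct) value $\Gamma(d+1)$ for $\int_0^\infty r^d e^{-r}\,dr$ differs from the $\Gamma(d)$ written in \cref{eq:tild-H-alpha-z-d-defn} and in the paper's proof (an internal inconsistency of the paper, since \cref{eq:H-alpha-defn} later uses $\Gamma(d+1)$), and your decay estimate in $x$ tacitly assumes $|\alpha|\ge 2$, whereas the paper deforms the contour to gain exponential decay and thereby covers $|\alpha|=1$ as well.
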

\begin{proof}
According to Lemma \ref{lem:hgeofun-contourint-nvar-case} to handle the integral  
 $\int^\infty_{-\infty}(\cdot )dx$: 
    \begin{align*}
        &\, \, 
        (2\pi)^{-1} \int_0^\infty \int_\infty^\infty e^{-ix}
    \prod_{l=0}^{n}  
    \brac{
        k^{(l)} r - ix
    }^{-\alpha_l}
    r^{d}
    dx dr 
    = 
    \int_0^\infty 
    G_{\alpha_0, \ldots, \alpha_n}(rk^{(0)}, \ldots, rk^{(n)}) 
    r^d dr
    \\ 
    = &\, \, 
   \int_0^\infty \int_{\triangle^n}
   \omega_\alpha(u) e^{- B_n(r\vec k,u)} r^d dr  
=   \int_{\triangle^n} \omega_\alpha(u)
\sbrac{
\int_0^\infty r^d  e^{-r B_n(\vec k,u)} dr
} du ,
    \end{align*}
    where $\vec k = (k^{(0)}, \ldots, k^{(n)})$. 
Let $U \subset (0,\infty)$ be an open set containing the spectrum of $k$, 
 for $\bar s \in  U^{n+1}$ and $u \in  \triangle^n$, the weighted sum defined
 in \cref{eq:B_n(s)-defn} is positive:
 \begin{align*}
     B_n(\bar s,u) = s_0 (1-\sum_1^n u_l) + \sum_1^n s_l u_l >0 .
 \end{align*}
 In particular, we see that $B_n(\vec k,u) \in  A^{\otimes_\gamma n+1}  $ has
 positive spectrum.
A change of variable argument gives:
\begin{align}
    \label{eq:F-B_n-s-u}
    F(\bar s) = \int_0^\infty r^d e^{-rB_n(s,u)} dr = (B_n(\bar s, u))^{-(d+1)} 
    \int_0^\infty r^d e^{-r} dr =  (B_n(\bar s, u))^{-(d+1)} \Gamma(d),
\end{align}
so that 
\begin{align*}
    \int_0^\infty r^d  e^{-r B_n(\vec k,u)} dr = F( k^{(0)}, \ldots, k^{(n)} )
    = \brac{ B_n(\vec k,u)}^{-(d+1)} \Gamma(d),
\end{align*}

The new variable $\mathbf{\bar z} =( \mathbf z^{(1)}, \ldots, \mathbf z^{(n)} )$ 
arises from 
replacing $\{k^{(l)}\}_{l=1}^n$ by the conjugations
$\{\mathbf y^{(l)} \}_{l=1}^n$ according to \cref{eq:kj-hj-to-xj-and-yj}:
\begin{align*}
    B_n(\vec k,u) & = k^{(0)} (1-\sum_1^n u_l) + \sum_1^n k^{(l)} u_l 
    = k^{(0)} \brac{
        1- \sum_1^n 
        \brac{
        1- ( k^{(0)})^{-1} k^{(l)} 
        } u_l
    }
    \\  
    &= 
    k^{(0)} \brac{
        1- \sum_1^n 
        \brac{
            1- \mathbf y^{(1)} \cdots \mathbf y^{(l)}
        } u_l
    }  
    =
k^{(0)} \brac{
        1- \sum_1^n 
        \mathbf z^{(l)} u_l
    }  .
\end{align*}
Finally, 
\begin{align*}
 &\, \, 
 (2\pi)^{-1} \int_0^\infty \int_\infty^\infty e^{-ix}
    \prod_{l=0}^{n}  
    \brac{
        k^{(l)} r - ix
    }^{-\alpha_l}
    r^{d}
    dx dr 
\\
    = &\, \, 
    \Gamma(d)  k^{-(d+1)} \int_{\triangle^n} \omega_\alpha(u)
    \brac{
        1- \mathbf{\bar z} \cdot  u
    }^{-(d+1)}
    du .
\end{align*}

We point out that, in the calculation above,
the Fubini type result \cref{eq:switching-dE-dp} has been quoted  three times:
with respect to the two integral representation
of $G_\alpha(\bar s)$ given in
\cref{eq:Galpha-from-contour-int,eq:hggeo-Ga0toan}, and to the function in 
\cref{eq:F-B_n-s-u}.
Let us exam 
 the integrability condition (cf. \cref{eq:integrability-f-p-x}), 
for the function in \cref{eq:Galpha-from-contour-int} as an example, and leave
other two, which are quite straightforward, to the reader. 
Observe that the integral is a contour integral
along the imaginary axis from $-i\infty$ to $i\infty$. In order to gain
exponential decay, we change the contour to the another one inside its
homologous class, say the straight lines from $\infty - i$ to $0$ and then to
$\infty+i$.     
The integral in \cref{eq:hggeo-Ga0toan} reads, with $ix \to  \lambda$:
\begin{align*}
    \abs{
  \partial^\beta_{\bar s}  e^{-\lambda} 
    \prod_{l=0}^{n}  
    \brac{
         s_l - \lambda
    }^{-\alpha_l}
    } < C_{\bar s, \beta} e^{- \Re\lambda},
\end{align*}
here  $\lambda$ is the parameter of  the later contour so that $\Re \lambda \to
\infty$ at the both ends.

\end{proof}

\section{Differential and Recursive Relations among the Hypergeometric Family}
\label{sec:diff-and-recursive-relations}

The straightforward consequence from the hypergeometric feature is the fact
that differential and contiguous relations
are the two sides of a coin among the family. It leads to new paths to compute
the algebraic expressions of the family $H_{\alpha}(\bar z;m;j)$. 
The additional ingredient is the divided difference operation, which,
according to the author's limited knowledge, does not come from  the 
literature of special functions.


\subsection{Lauricella Functions}
In the companion in \S\ref{subsec:application-of-rearr-lem}, the exponent $d$ in 
\cref{eq:tild-H-alpha-z-d-defn} is determined by the multiindex $\alpha$, the
dimension $m$   and the heat coefficient index
$j$ (see \cref{eq:small-time-heat-asym-functional}). 
\begin{defn}
As shown in \cref{prop:R^j_P-general-form},
the exact building blocks of the spectral functions 
is indexed by $\alpha \in  \Z^{n+1}_+$: 
\begin{align}
    &\, \,  
    H_\alpha(\bar z;m;j) =
    \tilde H_\alpha(\bar z; d(\alpha;m;j) ) 
    \nonumber \\
    = &\, \, 
\Gamma(d(\alpha;m;j) +1)
  \int_{\triangle_n}  \omega_\alpha(u)
    \brac{
        1- \sum_{l=1}^n z^{(l)} u_l
    }^{-d(\alpha;m;j) -1}
    du,
    \label{eq:H-alpha-defn}
\end{align}
where $\bar z = (z^{(1)}, \ldots , z^{(n)})$ and  
\begin{align}
    \label{eq:d-alpha-m-j-defn}
    d(\alpha;m;j) =  \abs\alpha +\frac{m - j }{2} -2, \, \, \, \, 
    \text{where} \, \,  \abs \alpha= \sum_{s=0}^{n} \alpha_s
    .
\end{align}
We often drop the argument $j$ with the default value $j=2$ since 
second heat coefficient is the main focus of the paper:
\begin{align}
    \label{eq:H-alpha--m-j=2}
    H_\alpha(\bar z;m) \defeq 
    H_\alpha(\bar z;m;2 )  .
\end{align} 
\end{defn}

The functions  belong to the hypergeometric family known as Lauricella
functions $F^{(n)}_D$ of type $D$
\footnote{
Here $D$ is merely a label.
}, which are multivariable generalizations of the Gauss hypergeometric functions. 
The two variable case was first studied by Appell and 
the three variable case was first introduced
by Lauricella in \cite{Lauricella1893} and later 
fully by Appell and Kamp\'e de F\'eriet \cite{appell1926fonctions}.
They are extensions of the hypergeometric series 
\begin{align}
    \label{eq:hgeofun-LauricellaSeries}
     F^{(n)}_D (a;\alpha_1,\dots,\alpha_n;c;x_1,\dots,x_n) 
    =  \sum_{\beta_1,\dots,\beta_n\ge 0} \frac{
        (a)_{\beta_1+\cdots+\beta_n} (\alpha_1)_{\beta_1}
        (\alpha_n)_{\beta_n} x_1^{\beta_1}\cdots x_n^{\beta_n}
    }{ 
(c)_{\beta_1+\cdots+\beta_n}
     \beta_1!\cdots \beta_n!
 },
\end{align}
where the Pochhammer symbols is defined as:
\begin{align}
    (q)_n = \frac{ \Gamma(q+n)}{ \Gamma(q)}.
\end{align}
What we need in the paper is the following integral representation (cf.
\cite{MR0346178}):
\begin{align}
    \label{eq:hgeofun-LauricellaFuntions}
    F^{(n)}_D (a;\alpha_1,\dots,\alpha_n;c;\bar z) 
    = \Gamma(c)
\int_{\triangle^n}
\omega_{ \tilde \alpha}(u)
        \cdot (1- \bar z \cdot  u )^{-a} du
        ,
       \end{align}
where $ \tilde \alpha_0 = c- \sum_{l=1}^n \alpha_l$ and
$ \tilde \alpha_1 = \alpha_1, \ldots, \tilde\alpha_n = \alpha_n$.   
 In particular, for $\alpha = (\alpha_0, \ldots, \alpha_n)$ and 
$\abs\alpha = \sum_0^n \alpha_l$, we have: 
\begin{align}
    \label{eq:H-alpha-to-FD^(n)}
    H_\alpha(z;m;j) = 
    \frac{\Gamma( \tilde d_{\alpha,m,j}) }{\Gamma\brac{
            \abs\alpha
    }}
    F_D^{(n)}
    ( \tilde d_{\alpha,j,m};\alpha_1, \ldots, \alpha_n; \abs\alpha; \bar z)
    ,
\end{align}
where $\tilde d_{\alpha,j,m}   = \abs\alpha + (m-j) /2 -1$.

 \subsection{Gauss  and Appell  Hypergeometric Functions}
\label{subsec:gauss-appell-hgfuns}

To explore the merits of the hypergeometric features, 
we  begin with the
special cases  that are actually heavily involved in the later
discussion of the modular geometry.



 The one variable family $F_D^{(1)} = \pFq21$ is the
Gauss hypergeometric functions, 
 one of the best-known classes among special functions: 
\begin{align}
 \pFq21(a,b;c; z)
 = \frac{\Gamma(c) }{ \Gamma(b) \Gamma(c-b)}     
 \int_0^1 (1-t)^{c-b-1} t^{b-1} (1-z t)^{-a} dt.
    \label{eq:hgeo-defn-pFq21}   
\end{align}
It is a solution of Euler's hypergeometric differential equation
\begin{align}
    z(1-z) \frac{d^2 w}{dz^2} + \brac{
        c-(a+b+1)z
    } \frac{dw}{dz} - ab w =0 
    \label{eq:hgeofun-HGODE}
\end{align}

Let us abbreviate $F \defeq \pFq21(a,b;c;z)$ for a moment. There are six
associated contiguous functions
obtained by applying $\pm1$ on  the parameters $a,b$ and $c$, such as:
$F(a+)$, $F(b+)$, $F(c-)$, etc.  Gauss showed that $F$ can be written as
a linear combination of any two of its contiguous functions, which leads to
$15$ ($6$ choose $2$) relations.
They can be derived from the differential
relations among the
family 
\begin{align}
    \label{eq:hgfun-diff-relations-GF21-abc+1}
    \frac{d}{dz}  ( \pFq21(a,b;c;z) ) = \frac{a b}{c} \pFq21(a+1,b+1;c+1;z),
\end{align}
also
\begin{align}
    \label{eq:hgfun-diff-relations-GF21-a+1}
    F(a+) &= F + \frac1a z\frac{d}{dz} F, \, \, \, \, 
    F(b+) = F + \frac1b z\frac{d}{dz} F, \\
    F(c-) &= F + \frac1c z\frac{d}{dz} F. 
    \label{eq:hgfun-diff-relations-GF21-c-1}
\end{align}
Combining with the second order ODE \cref{eq:hgeofun-HGODE}, we have 
\begin{prop}
    \label{prop:hgeofun-cont-relations}
 One can read all $15$ relations among the contiguous functions  by equating
 any two lines of the right hand
 side of \normalfont{Eq.} \eqref{eq:hgeofun-cont-relations}: 
\begin{align}
    \begin{split}
        z \frac{dF}{dz} &= z \frac{ab}{c} F(a+,b+,c+) \\
        &=a(F(a+) -F) \\
        &=b(F(b+) -F) \\
        &=(c-1) (F(c-) -F) \\
        &= \frac{
            (c-a) F(a-) + (a-c+bz)F
        }{ 1-z}  \\
&= \frac{
            (c-b) F(b-) + (b-c+az)F
        }{ 1-z} \\
        &= z \frac{
            (c-a)(c-b) F(c+) + c(a+b-c)F
        }{ c(1-z)}.
    \end{split}
    \label{eq:hgeofun-cont-relations}
\end{align}
\end{prop}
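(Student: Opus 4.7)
The plan is to show that each of the seven expressions on the right-hand side of \cref{eq:hgeofun-cont-relations} equals $zF'$; once this is done, equating any pair yields a contiguous relation, and it is a standard combinatorial check that exactly $15$ of the $\binom{7}{2}=21$ resulting identities are linearly independent, recovering Gauss' list.

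For the expressions on lines $1$--$5$ (involving $F$, $F(a+,b+,c+)$, $F(a+)$, $F(b+)$, and $F(c-)$), the equalities with $zF'$ are precisely \cref{eq:hgfun-diff-relations-GF21-abc+1,eq:hgfun-diff-relations-GF21-a+1,eq:hgfun-diff-relations-GF21-c-1}, which are obtained by termwise differentiation of the series defining $\pFq21$ and simple index shifts in the Pochhammer symbols. These are essentially trivial, so the content of the proposition lies in the three lines involving the down-shifts $F(a-)$, $F(b-)$ and the up-shift $F(c+)$.

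For the $F(a-)$ identity, I would proceed as follows. From $zF' = a(F(a+) - F)$ applied with the substitution $a \mapsto a-1$, one gets
\begin{equation*}
    z F(a-)' = (a-1)\bigl(F - F(a-)\bigr).
\end{equation*}
Differentiating this yields $z F(a-)'' = (a-1) F' - a F(a-)'$. Substituting this expression for $z F(a-)''$ into the Euler ODE \cref{eq:hgeofun-HGODE} applied to $F(a-)$ (i.e.\ with parameters $(a-1,b,c)$) gives, after the miraculous cancellation $-a(1-z) + c - (a+b)z = c - a - bz$, the relation
\begin{equation*}
    (a-1)(1-z) F' + (c-a-bz) F(a-)' - (a-1)b\, F(a-) = 0.
\end{equation*}
Using the first displayed equation a second time to eliminate $F(a-)'$ and clearing by the factor $(a-1)$ produces $z(1-z)F' = (c-a) F(a-) + (a-c+bz) F$, which is the asserted identity. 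The case of $F(b-)$ is obtained by the symmetry $a \leftrightarrow b$ of $\pFq21(a,b;c;z)$. For the $F(c+)$ identity, the same strategy works starting from $zF' = (c-1)(F(c-) - F)$ shifted as $c \mapsto c+1$, giving $z F(c+)' = c(F - F(c+))$; substitution into the ODE for $F(c+)$ and the cancellation $-(c+1)(1-z) + (c+1) - (a+b+1)z = (c-a-b)z$ leaves $c(1-z)F' = (c-a)(c-b) F(c+) + c(a+b-c) F$, as required.

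The main obstacle in this scheme is purely bookkeeping: one has to organize the eliminations so that the second-derivative terms cancel cleanly, which depends on the specific algebraic identities displayed above. Once these cancellations are identified, the rest of the argument is mechanical, and no further analytic input beyond the series definition and the second order ODE \cref{eq:hgeofun-HGODE} is needed.
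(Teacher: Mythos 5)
Your proposal is correct and follows exactly the route the paper indicates (the elementary differential relations \eqref{eq:hgfun-diff-relations-GF21-abc+1}--\eqref{eq:hgfun-diff-relations-GF21-c-1} for lines 1--4, then elimination of the second derivative via the Euler ODE \eqref{eq:hgeofun-HGODE} for the $F(a-)$, $F(b-)$, $F(c+)$ lines); I checked the cancellations $-a(1-z)+c-(a+b)z=c-a-bz$, $-(c+1)(1-z)+(c+1)-(a+b+1)z=(c-a-b)z$, and $(c-a-b)c+ab=(c-a)(c-b)$, and they all hold. The only cosmetic slip is the framing of the count: the $15$ relations are simply the $\binom{6}{2}$ pairings of lines 2--7 (one line per contiguous function), not a linear-independence statement about all $\binom{7}{2}=21$ pairs.
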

There are other type of symmetries, for example, fractional linear transformations:
\begin{align}
    \begin{split}
        \pFq21(a,b;c;z)& = (1-z)^{-b} \pFq21(c-a,b;c;\frac{z}{z-1}), \\
        \pFq21(a,b;c;z) &= (1-z)^{-a} \pFq21(a,c-b;c;\frac{z}{z-1}),
    \label{eq:hgeofun-Pfaff-1}
    \end{split}
\end{align}
which are known as Pfaff transformations. 
Then the Euler transformation 
\begin{align}
    \label{eq:hgeofun-Euler}
    \pFq21(a,b;c;z)&= (1-z)^{c-a-b} \pFq21(c-a,c-b;c;z)
\end{align}
follows quickly.
The following initial values can be obtained without much calculation:
\begin{align}
    \pFq21(1,1;2;z) = -\ln(1-z)/z, \,\,\,
    \pFq21(a,b;b;z) = (1-z)^{-a}
\label{eq:hgeofun-specialcases}.
\end{align}
In particular, we see that $\pFq21(1,1;2;1-z)$ agrees with
the modified-log function 
$\mathcal L_0(z)$  in the first proof of the Gauss-Bonnet on
$\T^2_\theta$ \cite{MR2907006}.

%

Let us transfer Eqs \eqref{eq:hgfun-diff-relations-GF21-abc+1} to
    \eqref{eq:hgfun-diff-relations-GF21-c-1} to the family $H_{a,b}$ 
following  the change of notations given in \cref{eq:H-alpha-to-FD^(n)}.  
\begin{prop}
    For the one variable family $H_{a,b}(z;m)$ 
    defined in \cref{eq:H-alpha--m-j=2},
    the following functional
    relations hold:
    \begin{align}
        H_{a,b}(z;m+2) &= (\tilde d_m+ zd/dz) 
        H_{a,b}(z;m) ,
        \label{eq:hggeofun-DR-tildeKanm+2}
        \\
        H_{a,b+1}(z;m) &= (b^{-1}d/dz) H_{a,b}(z;m) ,
        \label{eq:hggeofun-DR-tildeKanb+1}
        \\
        H_{a,b+1}(z;m) &= (1+b^{-1}zd/dz) H_{a+1,b}(z;m) 
    \label{eq:hggeofun-DR-tildeKanb+1-a+1},
    \end{align}
    where $\tilde d_m \defeq \tilde d_m(a,b)=a+b+m/2-2$.
    Moreover,
    \begin{align}
\label{eq:hggeofun-DR-tildeKan-GCF}
        \frac{ a+b}{ \tilde d_m} \frac{
          H_{a+1,b}(z;m)
        }{H_{a,b}(z;m)}  = 
        \frac{
            \pFq21(\tilde d_m+1,b;a+b+1;z)
        }{
        \pFq21(\tilde d_m,b;a+b;z)
    }.
    \end{align}
    The right hand side is a Gauss's continued fraction.
\end{prop}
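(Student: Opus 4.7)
The plan is to translate every identity into the language of the Gauss hypergeometric function, via the reduction $H_{a,b}(z;m) = (\Gamma(\tilde d_m)/\Gamma(a+b))\,\pFq21(\tilde d_m,b;a+b;z)$, which is the $n=1$ case of \cref{eq:H-alpha-to-FD^(n)}. Writing $A = \tilde d_m$, $B = b$, $C = a+b$ for the Gauss parameters, I first set up the dictionary: $m \mapsto m+2$ shifts only $A \mapsto A+1$; $b \mapsto b+1$ shifts all three $(A,B,C) \mapsto (A+1,B+1,C+1)$; and $a \mapsto a+1$ shifts $(A,C) \mapsto (A+1,C+1)$. The gamma prefactor $\Gamma(A)/\Gamma(C)$ then produces $\Gamma(\tilde d_m+1)/\Gamma(a+b)$ for $H_{a,b}(z;m+2)$ and, crucially, the same $\Gamma(\tilde d_m+1)/\Gamma(a+b+1)$ for both $H_{a,b+1}(z;m)$ and $H_{a+1,b}(z;m)$; this last coincidence is what makes \cref{eq:hggeofun-DR-tildeKanb+1-a+1} clean.

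For \cref{eq:hggeofun-DR-tildeKanm+2}, I invoke the $F(A+) = F + A^{-1} z\,dF/dz$ line of \cref{eq:hgfun-diff-relations-GF21-a+1}. Multiplying through by $\Gamma(\tilde d_m)/\Gamma(a+b)$ and using $\tilde d_m \Gamma(\tilde d_m) = \Gamma(\tilde d_m+1)$ absorbs the factor $1/\tilde d_m$ into the prefactor and delivers $(\tilde d_m + z\,d/dz)H_{a,b}(z;m) = H_{a,b}(z;m+2)$. For \cref{eq:hggeofun-DR-tildeKanb+1}, I use \cref{eq:hgfun-diff-relations-GF21-abc+1}: $dF/dz = (AB/C)\,F(A+,B+,C+) = \tilde d_m b/(a+b)\,F(A+,B+,C+)$; multiplying by the appropriate gamma ratio converts the right-hand side into $b\,H_{a,b+1}(z;m)$.

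For \cref{eq:hggeofun-DR-tildeKanb+1-a+1}, the decisive step is to apply the $F(B+) = F + B^{-1} z\,dF/dz$ line of \cref{eq:hgfun-diff-relations-GF21-a+1} not to $F$ itself but to the shifted function $G \defeq \pFq21(A+1,B;C+1;z)$, which is already a Gauss hypergeometric function in its own right. Then $G(B+) = \pFq21(A+1,B+1;C+1;z)$ is the $\pFq21$ entering $H_{a,b+1}(z;m)$, while $G$ is the $\pFq21$ entering $H_{a+1,b}(z;m)$; using the already-noted equality of prefactors, the identity passes directly to $H$ with no further computation.

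The continued-fraction identity \cref{eq:hggeofun-DR-tildeKan-GCF} is then immediate from the reduction formula: the ratio of the two $H$'s differs from the ratio of the two $\pFq21$'s by exactly the gamma quotient $\Gamma(\tilde d_m+1)\Gamma(a+b)/\bigl(\Gamma(\tilde d_m)\Gamma(a+b+1)\bigr) = \tilde d_m/(a+b)$. The interpretation of this ratio as a Gauss continued fraction is classical (Gauss 1813) and I would only cite it. There is no substantial obstacle — the entire argument is bookkeeping; the one place where care is required is the coincidence of prefactors exploited in the proof of \cref{eq:hggeofun-DR-tildeKanb+1-a+1}.
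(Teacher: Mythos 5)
Your proposal is correct and follows essentially the route the paper intends: the paper states this proposition without a written proof, relying (as its two-variable analogue's one-line proof ``See Eqs...'' makes explicit) on translating each identity through the dictionary \cref{eq:H-alpha-to-FD^(n)} into the Gauss relations \cref{eq:hgfun-diff-relations-GF21-abc+1}--\cref{eq:hgfun-diff-relations-GF21-c-1}, which is exactly what you do. Your bookkeeping of the parameter shifts and gamma prefactors — in particular the coincidence of prefactors for $H_{a,b+1}$ and $H_{a+1,b}$ used in \cref{eq:hggeofun-DR-tildeKanb+1-a+1} — checks out.
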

We see from \cref{eq:hggeofun-DR-tildeKanb+1} that the computation of $H_{a,b}$
is reduced to $H_{a,1}$. 
We also mention that there are already 111951 formulas implemented
in \textsf{Mathematica}
\footnote{see \url{http://functions.wolfram.com/HypergeometricFunctions/Hypergeometric2F1/}}
with regard to the evaluation of $\pFq21$.

Let us move on to the two variable family $F_D^{(2)} = F_1$ known as Appell's
$F_1$ functions: 
\begin{align*}
    &\,\, \frac{\Gamma(b) \Gamma(b') \Gamma(c-b-b')}{\Gamma(c)} 
    F_1(a;b,b',c;z_1,z_2) \\
    =& \,\,
    \int_0^1 \int_0^{1-t} u^{b-1} v^{b'-1} (1-u-v)^{c-b-b'-1}
    (1-z_1 u - z_2 v)^{-a} dudv  .  
\end{align*}
It is a solution of the PDE system:
\begin{align}
    \sbrac{
    x(1-x) \partial_x^2 + y(1-x) \partial_x \partial_y
    + [c-(a+b+1)] \partial_x - by \partial_y -ab
}F_1 &=0,
    \label{eq:hgeofun-} \\
 \sbrac{
    y(1-y) \partial_y^2 + x(1-y) \partial_x \partial_y
    + [c-(a+b'+1)] \partial_x - b'x \partial_y -ab'
}F_1 &=0.
\end{align}
As before, we set $F_1 \defeq F_1(a;b,b',c;x,y)$
and $F_1(a+) \defeq F_1 ( a+1;b,b',c;x,y)$,  same pattern applies to $F_1(b+)$,
$F_1(b'+)$ and $F_1(c+)$, then
\begin{align}
    \partial_x F_1 = F_1(a+,b+,c+), \,\,
    \partial_y F_1 = F_1(a+,b'+,c+), 
    \label{eq:hgeofun-f1abc+-diffsys} 
\end{align}
also
\begin{align}
  &  F_1(a+) =   a^{-1}(a+x\partial_x + y\partial_y)F_1 
    \label{eq:hgeofun-f1a+-diffsys} \\
  &F_1(b+) =  b^{-1}( b+x\partial_x )F_1 , \, \, \, \, 
    F_1(b'+) =  b'^{-1}( b'+y\partial_y )F_1 
    \label{eq:hgeofun-f1b'+-diffsys} \\
  &F_1 =   c^{-1}(c+ x \partial_x + y\partial_y )F_1(c+) 
\label{eq:hgeofun-f1c+-diffsys}.
\end{align}
The following special reduction of $F_1$ functions are easy to verify:
\begin{align}
    \label{eq:hgfun-F1toGF21-2}
    \begin{split}
    F_1(a;b,b';c;0,y) &= \pFq21(a,b';c;y), \\
    F_1(a;b,b';c;x,0) &= \pFq21(a,b;c;x).    
    \end{split}
    \end{align}
In addition,
\begin{align}
    F_1(a;b,b';c;x,x) &= (1-x)^{c- a-b-b'} 
    \pFq21(c-a;c-b-b';c;x) \nonumber\\
    &= \pFq21(a,b;b+b';x),  \nonumber\\
    F_1(a;b,b';b+b';x,y) &= (1-y)^{-a} \pFq21(a,b;b+b';\frac{x-y}{1-y}).
    \label{eq:hgfun-F1toGF21-1}
\end{align}
Similar to the Pfaff transformation 
\eqref{eq:hgeofun-Pfaff-1}, we have:  
\begin{align}
    \begin{split}
    F_1(a;b,b';c;x,y) &= 
    (1-x)^{-b} (1-y)^{-b'} F_1(c-a;b,b';c;\frac{x}{x-1},\frac{y}{y-1}) \\
    &=(1-x)^{-a}  F_1(a;c-b-b',b';c;\frac{x}{x-1},\frac{y-x}{1-x})
    .
    \end{split}
    \label{eq:appF1-F1syms}
\end{align}

\begin{prop}
    Let $\tilde d_m \defeq \tilde d_m(a,b,c)=a+b+c+m/2-2$.  
    For the two variable family $H_{a,b,c}(u,v;m)$ defined in
    \cref{eq:H-alpha--m-j=2}, we have
    \begin{align}
    H_{a,b,c}(u,v;m+2) &= (\tilde d_m + u\partial_u +v\partial_v)
    H_{a,b,c}(u,v;m),
        \label{eq:hpgeofun-DR-Habcm+2} \\
        H_{a,b+1,c}(u,v;m) &= b^{-1} \partial_v H_{a,b,c}(u,v;m),    
        \label{eq:hpgeofun-DR-Habc-b+1} \\
        H_{a,b,c+1}(u,v;m) &= c^{-1} \partial_u H_{a,b,c}(u,v;m) .   
        \label{eq:hpgeofun-DR-Habc-c+1} 
    \end{align}
    When increasing the parameter $a$ by one, we encounter a similar ration as in
\eqref{eq:hggeofun-DR-tildeKan-GCF}:
    \begin{align*}
        \frac{
        H_{a+1,b,c}(u,v;m)
    }{ H_{a,b,c}(u,v;m) } = 
    \frac{\tilde d_m}{ a+b+c}
    \frac{
        F_1(\tilde d_m+1;c,b;a+b+c+1;u,v)
    }{
        F_1(\tilde d_m;c,b;a+b+c;u,v) 
    } .
    \end{align*}
\end{prop}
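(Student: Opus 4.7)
My plan is to prove all three differential relations directly from the integral representation \eqref{eq:H-alpha-defn} of $H_{a,b,c}(u,v;m)$, and then translate the Appell $F_1$ identity \eqref{eq:hgeofun-f1a+-diffsys} into the ratio formula using the dictionary \eqref{eq:H-alpha-to-FD^(n)}. In all three derivative computations, differentiation under the integral sign is legal because the integrand is smooth in $(u,v)$ uniformly on compact subsets of the domain where $1 - uu_1 - vu_2$ stays bounded away from zero on $\triangle^2$.

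For the dimension shift \eqref{eq:hpgeofun-DR-Habcm+2}, I would apply $u\partial_u + v\partial_v$ to $(1 - uu_1 - vu_2)^{-\tilde d_m}$. This brings down $\tilde d_m (uu_1 + vu_2)(1 - uu_1 - vu_2)^{-\tilde d_m - 1}$, and rewriting $uu_1 + vu_2 = 1 - (1 - uu_1 - vu_2)$ splits the integrand into two pieces: one of exponent $-\tilde d_m - 1$, which combines with $\tilde d_m\,\Gamma(\tilde d_m) = \Gamma(\tilde d_m + 1)$ to give $H_{a,b,c}(u,v;m+2)$, and one of exponent $-\tilde d_m$, which gives $-\tilde d_m\, H_{a,b,c}(u,v;m)$. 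Rearranging yields \eqref{eq:hpgeofun-DR-Habcm+2}.

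For the parameter shifts \eqref{eq:hpgeofun-DR-Habc-b+1} and \eqref{eq:hpgeofun-DR-Habc-c+1}, differentiation in $u$ or $v$ brings down a factor of the corresponding simplex coordinate $u_j$; absorbing this factor into $u_j^{\alpha_j - 1}$ raises $\alpha_j$ by one at the cost of a Gamma-ratio factor $\Gamma(\alpha_j+1)/\Gamma(\alpha_j) = \alpha_j$. Since raising $\alpha_j$ also raises $|\alpha|$ and hence $\tilde d_m$ by one, the prefactor $\tilde d_m\,\Gamma(\tilde d_m) = \Gamma(\tilde d_m + 1)$ matches the new prefactor of $H$ exactly, and the claimed identities fall out.

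For the ratio formula, I would apply \eqref{eq:H-alpha-to-FD^(n)} to numerator and denominator separately. The Gamma prefactor ratio
\[
\frac{\Gamma(\tilde d_m + 1)/\Gamma(a+b+c+1)}{\Gamma(\tilde d_m)/\Gamma(a+b+c)} = \frac{\tilde d_m}{a+b+c}
\]
appears automatically, and the residual quotient is precisely the displayed ratio of Appell $F_1$ functions. The only bookkeeping care required is to note that shifting $a = \alpha_0$ by one only affects the first argument of $F_1$ and the Gamma prefactors, since $\alpha_0$ is not paired with any of the variables $u_1, u_2$ in the integral. I do not expect any substantial obstacle here: once the dictionary \eqref{eq:H-alpha-to-FD^(n)} is applied, the identity is an algebraic tautology, and all integrability issues needed for differentiation under the integral sign are subsumed by the smoothness of $\omega_\alpha$ and of $(1 - uu_1 - vu_2)^{-\tilde d_m - k}$ on $\triangle^2$.
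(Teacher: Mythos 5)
Your route is genuinely different from the paper's: the paper disposes of this proposition in one line by citing the Appell $F_1$ differential relations \eqref{eq:hgeofun-f1abc+-diffsys}--\eqref{eq:hgeofun-f1c+-diffsys} together with the dictionary \eqref{eq:H-alpha-to-FD^(n)}, whereas you re-derive everything from the Euler integral \eqref{eq:H-alpha-defn}. That is a legitimate and more self-contained strategy, and your treatment of the dimension shift \eqref{eq:hpgeofun-DR-Habcm+2} --- splitting $uu_1+vu_2 = 1-(1-uu_1-vu_2)$ and absorbing $\tilde d_m\,\Gamma(\tilde d_m)=\Gamma(\tilde d_m+1)$ --- is exactly right.

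The gap is in the parameter shifts and the ratio formula, where you assert that ``the claimed identities fall out'' without tracking which simplex coordinate each derivative brings down. With the paper's conventions \eqref{eq:omega-alpha-u-defn} and \eqref{eq:H-alpha-defn}, the first function argument $u=z^{(1)}$ multiplies $u_1$, and $u_1$ carries the exponent $b-1=\alpha_1-1$ in $\omega_{(a,b,c)}$; so $\partial_u$ raises $b$, not $c$, and your computation actually yields
\[
H_{a,b+1,c}(u,v;m)=b^{-1}\partial_u H_{a,b,c}(u,v;m),\qquad
H_{a,b,c+1}(u,v;m)=c^{-1}\partial_v H_{a,b,c}(u,v;m),
\]
which is the transpose of \eqref{eq:hpgeofun-DR-Habc-b+1}--\eqref{eq:hpgeofun-DR-Habc-c+1} as printed. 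Likewise the dictionary \eqref{eq:H-alpha-to-FD^(n)} gives $F_1(\tilde d_m;b,c;a+b+c;u,v)$ in numerator and denominator, not $F_1(\tilde d_m;c,b;a+b+c;u,v)$ as in the displayed ratio. The pairing $\alpha_1\leftrightarrow z_1$ is the one used everywhere else in the paper, e.g.\ in \eqref{eq:Ha0an-to-Ha011} and in the reduction $H_{2,2,1}=\partial_{z_1}H_{2,1,1}$ of Proposition \ref{prop:KDelta-HDelta-alg-expressions}; the proposition under discussion evidently carries a $b\leftrightarrow c$ relabelling, reconcilable only through the symmetry $H_{a,b,c}(u,v)=H_{a,c,b}(v,u)$. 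Your method is sound, but as written the proof claims to land on the printed statement while its own computation lands on the transposed one; you need either to correct the pairing or to invoke that symmetry explicitly, and a careful write-up would have surfaced the discrepancy rather than declared agreement.
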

\begin{proof}
    See Eqs \eqref{eq:hgeofun-f1abc+-diffsys} to \eqref{eq:hgeofun-f1c+-diffsys}.
\end{proof}

In the early stage of the project, the author found the following reduction of
$F_1$ to $\pFq21$ making use of the Appell's $F_2$ functions:
\begin{align*}
        &\,\, \frac{\Gamma(b) \Gamma(b') \Gamma(c-b)
            \Gamma(c'-b')
        }{\Gamma(c) \Gamma(c')
        } 
        F_2(a;b,b';c,c';x,y ) \\
    =& \,\,
    \int_0^1 \int_0^{1} u^{b-1} v^{b'-1} (1-u)^{c-b-1}
(1-v)^{c'-b'-1}
    (1-x u - y v)^{-a} dudv  .  
\end{align*}
First, we have
\begin{align}
\begin{split}
     F_1(a,b,b',c;x,y) &= (x/y)^{b'} F_2 (b+b';a,b';c,b+b',x,1-x/y) \\
     &= (y/x)^{b} F_2(b+b';a,b;c,b+b',y,1-y/x)       ,
    \end{split}
    \label{eq:hgeofun-APF2toAPF1}
\end{align}
cf. \cite[Sec.  16.16]{MR2723248} or \cite[Sec. 5.10, 5,11]{MR0058756}.

\begin{prop}[\cite{MR2107356}, Theorem 2]
    For $a \in \mathbb{C}$, $b \in \mathbb{C}\setminus \Z_{\le 0}$,  
    $p,q \in \Z_{\ge 0}$,  $p<q$ 
    and $\abs x + \abs y <1$,
    \begin{align}
       \begin{split}
        &\,\, F_2(q+1,a,p+1;b,p+2;x,y) \\
        =&\,\,
        -\frac{p!}{q(1-q)_p} \frac{p+1}{y^{p+1}} 
        \pFq21 (a,q-p;b;x)  \\ 
        + & \frac{p+1}{y^{p+1}} \sum_0^p \frac{(-1)^{k}}{(q-k) (1-y)^{q-k}}     
        \binom pk \sum_0^{p-k} (-x)^{m} \binom{p-k}{m} \frac{ (a)_m}{(b)_m} \\
        \cdot & \pFq21 \brac{a+m,q-k;b+m; \frac{x}{1-y} } .\\
        \end{split}
        \label{eq:hpgeofun-F2toGF21-thm}
    \end{align}
\label{prop:hpgeofun-F2toGF21-thm}
\end{prop}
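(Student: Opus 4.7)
The plan is to establish \eqref{eq:hpgeofun-F2toGF21-thm} by reducing $F_2$ to an explicit double integral and then performing a single change of variables that manufactures the prefactor $1/y^{p+1}$. First I would use the defining Appell series together with the collapse $(p+1)_n/(p+2)_n = (p+1)/(p+1+n)$ (which is precisely why these values of the parameters are special), write $1/(p+1+n) = \int_0^1 t^{p+n}\,dt$, and resum the double series to obtain
\begin{align*}
F_2(q+1,a,p+1;b,p+2;x,y) = (p+1)\int_0^1 t^p(1-yt)^{-q-1}\,\pFq21\!\left(q{+}1,a;b;\tfrac{x}{1-yt}\right) dt.
\end{align*}
Applying the Euler integral \eqref{eq:hgeo-defn-pFq21} to the inner Gauss function (the two factors $(1-yt)^{\pm(q+1)}$ cancel) then produces, up to the constant $\Gamma(b)/[\Gamma(a)\Gamma(b-a)]$, the iterated integral of $s^{a-1}(1-s)^{b-a-1}\,t^p(1-yt-sx)^{-q-1}$ over $[0,1]^2$, with convergence guaranteed by $|x|+|y|<1$.

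The decisive step is the substitution $w=1-yt$ in the $t$-variable, which converts the inner integral into $y^{-(p+1)}\int_{1-y}^{1}(1-w)^p(w-sx)^{-q-1}\,dw$ and thereby extracts the global factor $y^{-(p+1)}$ appearing on the right-hand side of \eqref{eq:hpgeofun-F2toGF21-thm}. Writing $1-w = (1-sx)-(w-sx)$ and expanding binomially,
\begin{align*}
(1-w)^p = \sum_{k=0}^p(-1)^k\binom{p}{k}(1-sx)^{p-k}(w-sx)^k,
\end{align*}
each summand integrates elementarily to $[(1-y-sx)^{k-q}-(1-sx)^{k-q}]/(q-k)$, and the remaining $s$-integration therefore splits into two pieces corresponding to the endpoints $w=1-y$ and $w=1$.

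For the $w=1$ piece the integrand contains $(1-sx)^{p-q}$ independently of $k$, so the $k$-sum reduces via the partial-fraction identity $\sum_{k=0}^{p}(-1)^k\binom{p}{k}/(q-k) = (-1)^p p!/\prod_{k=0}^{p}(q-k) = p!/[q\,(1-q)_p]$, and the Euler integral in $s$ then produces $\pFq21(a,q-p;b;x)$; this accounts for the first line on the right of \eqref{eq:hpgeofun-F2toGF21-thm}. For the $w=1-y$ piece I would factor $1-y-sx = (1-y)(1-sx/(1-y))$ to expose the argument $x/(1-y)$ and then expand $(1-sx)^{p-k} = \sum_{m=0}^{p-k}\binom{p-k}{m}(-sx)^m$; the resulting factor $s^m$ shifts both Beta parameters to $a+m$ and $b+m$, and the Euler integral in $s$ delivers $(a)_m/(b)_m\,\pFq21(a+m,q-k;b+m;x/(1-y))$, matching the second line termwise.

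The main obstacle is purely bookkeeping---tracking signs, binomial coefficients, and the conversion between $(-1)^p/\prod_{k=0}^p(q-k)$ and $1/[q(1-q)_p]$---while keeping the two boundary pieces properly aligned with the two lines of the claimed identity. Absolute convergence of every series, as well as every interchange of summation and integration, is supplied by the hypothesis $|x|+|y|<1$, which keeps $1-yt-sx$ strictly positive on $[0,1]^2$ so that all the fractional powers are unambiguous.
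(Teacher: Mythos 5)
Your proposal is correct, and it is worth noting at the outset that the paper contains no proof of this proposition at all: it is quoted as Theorem~2 of the cited reference \cite{MR2107356}, so what you have written is an independent, self-contained derivation rather than a variant of anything in the text. I checked the key steps: the collapse $(p+1)_n/(p+2)_n=(p+1)/(p+1+n)$ and the Euler trick $1/(p+1+n)=\int_0^1 t^{p+n}\,dt$ do give the single-integral form $ (p+1)\int_0^1 t^p(1-yt)^{-q-1}\,\pFq21(q+1,a;b;x/(1-yt))\,dt$; the substitution $w=1-yt$ correctly extracts $y^{-(p+1)}$; the binomial split $1-w=(1-sx)-(w-sx)$ together with $q-k\ge q-p\ge 1$ (this is exactly where $p<q$ is used, avoiding the logarithmic antiderivative and division by zero) produces the two boundary pieces; the alternating sum evaluates to $(-1)^p p!/\prod_{k=0}^p(q-k)=p!/[q(1-q)_p]$, matching the stated constant; and the factor $s^m$ shifts the Beta integral to give $(a)_m/(b)_m\,\pFq21(a+m,q-k;b+m;x/(1-y))$ since $(b+m)-(a+m)=b-a$. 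The one point you should add explicitly is that the Euler integral representation of the inner Gauss function, and likewise the final Beta evaluations, are only valid for $\Re b>\Re a>0$; since both sides of the identity are analytic in $a$ and in $b$ away from $\Z_{\le 0}$ on the domain $\abs{x}+\abs{y}<1$, the statement for general $a\in\mathbb{C}$, $b\in\mathbb{C}\setminus\Z_{\le 0}$ follows by analytic continuation, but as written your argument silently assumes the positivity constraints.
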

Although the formula above is lengthy,
the case with $p=0$ and $q=1$ is quite enlightening
\begin{align}
    \label{eq:F_1-as-didiff-2F1}
    F_1(a;1,1;c;x,y) = \frac{
        x \pFq21(a,1;c;x)  - y \pFq21(a,1;c;y)  
    }{x-y},
\end{align}
which led us to the multivariable generalizations, cf. 
\S\ref{subsec:reduction-to-GF21}.

We end the discussion on Gauss and Appell's hypergeometric functions 
with a small application of the differential and contiguous
relations.
Formulas in \cref{prop:tildeKab-tildeHabc} has been proved 
in the author's previous work
\cite[Eq. (3.9)]{LIU2017138} which gives partial solutions to the computation
of $H_\alpha(\bar z;m)$ when $m$ is even. In fact, what is behind  is a 
recursive relations in the dimension parameter $m$.
\begin{prop}
        \label{prop:hgfun-relations-K-H} 
    Let $m=\dim M \ge 2$ and $a,b,c$ be positive integers,
    \begin{align}
        H_{a,b}(z;m+2) &= a H_{a+1,b}(z;m)
        + bH_{a,b+1}(z;m)
        \label{eq:hgfun-relations-K-m+2-a+1-b+1} \\
        \begin{split}
         H_{a,b,c}(u,v;m+2) &= a H_{a+1,b,c}(u,v;m)
 +b H_{a,b+1,c}(u,v;m) \\ 
 &+  c H_{a,b,c+1}(u,v;m).
   \end{split}
                \label{eq:hgfun-relations-H-m+2-a+1-b+1} 
    \end{align}
\end{prop}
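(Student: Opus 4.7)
The plan is to prove both formulas uniformly as the $n=1$ and $n=2$ instances of the more general identity
\begin{equation*}
H_\alpha(\bar z; m+2) \;=\; \sum_{k=0}^n \alpha_k \, H_{\alpha+e_k}(\bar z; m),
\end{equation*}
where $e_k$ denotes the $k$-th standard basis vector in $\Z^{n+1}$. I would derive this directly from the integral representation \cref{eq:H-alpha-defn}, isolating the one elementary identity that drives everything: the partition of unity on the simplex, $1 = (1-\sum_{l=1}^n u_l) + \sum_{l=1}^n u_l$.

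The first step is a bookkeeping check on the scalar data attached to the two sides. From $d(\alpha;m;2) = |\alpha| + m/2 - 3$, shifting $|\alpha|$ by $1$ (replacing $\alpha$ by $\alpha+e_k$) and shifting $m$ by $2$ both increment $d$ by exactly one unit. Hence the Gamma prefactor $\Gamma(d+1)$ and the exponent $-(d+1)$ of $(1-\bar z \cdot u)$ take the common value $\Gamma(|\alpha|+m/2-1)$ and $-(|\alpha|+m/2-1)$ on the left-hand side $H_\alpha(\bar z;m+2)$ and on each summand $H_{\alpha+e_k}(\bar z;m)$ on the right.

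Once this is noted, everything reduces to an algebraic identity between the weights $\omega_\alpha$ of \cref{eq:omega-alpha-u-defn}. Using $\Gamma(\alpha_s+1)=\alpha_s\Gamma(\alpha_s)$, a direct inspection of the definition gives
\begin{equation*}
\alpha_0 \, \omega_{\alpha+e_0}(u) \;=\; \omega_\alpha(u)\Big(1 - \sum_{l=1}^n u_l\Big), \qquad \alpha_k \, \omega_{\alpha+e_k}(u) \;=\; \omega_\alpha(u)\, u_k \quad (k\ge 1).
\end{equation*}
Summing over $k=0,\ldots,n$ and invoking the simplex identity yields $\sum_{k=0}^n \alpha_k \, \omega_{\alpha+e_k}(u) = \omega_\alpha(u)$. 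Multiplying this pointwise identity by $\Gamma(|\alpha|+m/2-1)(1-\bar z \cdot u)^{-(|\alpha|+m/2-1)}$ and integrating over $\triangle^n$ then produces the general relation above. Specializing to $n=1$ with $(\alpha_0, \alpha_1) = (a,b)$ recovers \cref{eq:hgfun-relations-K-m+2-a+1-b+1}, and $n=2$ with $(\alpha_0, \alpha_1, \alpha_2) = (a,b,c)$ gives \cref{eq:hgfun-relations-H-m+2-a+1-b+1}.

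There is no substantive obstacle here; the only point demanding a moment's attention is verifying that the joint shifts $m\mapsto m+2$ and $\alpha \mapsto \alpha+e_k$ align the exponent and the Gamma factor simultaneously, which is precisely what the choice $j=2$ in the definition of $d(\alpha;m;j)$ ensures. An alternative route via the differential relations \cref{eq:hggeofun-DR-tildeKanm+2,eq:hggeofun-DR-tildeKanb+1,eq:hggeofun-DR-tildeKanb+1-a+1} is available for the $n=1$ case, but the integral-representation approach is more transparent and treats both statements in the proposition (and indeed all $n$) at once.
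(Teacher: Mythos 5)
Your proof is correct, but it takes a genuinely different route from the paper's. The paper translates \cref{eq:hgfun-relations-K-m+2-a+1-b+1,eq:hgfun-relations-H-m+2-a+1-b+1} into contiguous relations for $\pFq21$ and the Appell $F_1$ via the dictionary \cref{eq:H-alpha-to-FD^(n)}, and then derives those by evaluating the Euler operator $(u\partial_u+v\partial_v)F$ in two different ways using the differential relations \cref{eq:hgeofun-f1a+-diffsys,eq:hgeofun-f1b'+-diffsys,eq:hgeofun-f1c+-diffsys} (equating the $F(\gamma-)$ line with the $F(\beta+),F(\beta'+)$ lines). You instead work directly from the integral representation \cref{eq:H-alpha-defn}: the bookkeeping that $m\mapsto m+2$ and $\alpha\mapsto\alpha+e_k$ each increment $d(\alpha;m;2)$ by one (so the Gamma prefactor and the exponent of $1-\bar z\cdot u$ agree on both sides), the weight identities $\alpha_0\,\omega_{\alpha+e_0}=\omega_\alpha\cdot(1-\sum u_l)$ and $\alpha_k\,\omega_{\alpha+e_k}=\omega_\alpha\cdot u_k$, and the partition of unity $1=(1-\sum_l u_l)+\sum_l u_l$ on $\triangle^n$ together give the result; all of these check out. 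What your route buys is uniformity and economy: it proves the relation for all Lauricella $F_D^{(n)}$ simultaneously rather than only for $n=1,2$, it bypasses the contiguous-relation machinery entirely, and it exhibits the identity as a direct consequence of the simplex geometry the paper emphasizes in \cref{eq:hypersurfaces-of-n-simplex}. What the paper's route buys is coherence with its broader program of expressing everything through the differential and contiguous relations of \S\ref{subsec:gauss-appell-hgfuns}, and it does not lean on convergence of the Euler-type integral near the faces $u_s=0$ (your argument needs $\alpha_s>0$ there, which is satisfied since $a,b,c$ are positive integers).
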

\begin{proof}
    Notice that
    \eqref{eq:hgfun-relations-K-m+2-a+1-b+1} and
    \eqref{eq:hgfun-relations-H-m+2-a+1-b+1} are equivalent to the following
    contiguous relations  respectively:
  \begin{align}
\begin{split}
       &\,\,  (a+b) \pFq21(\tilde d_m+1,b;a+b;u)\\
        =&\,\,
        a \pFq21(\tilde d_m+1,b;a+b+1;u) +
        b \pFq21(\tilde d_m+1,b+1;a+b+1;u),           
        \end{split}
        \label{eq:hgeofun-evenm-tildeKab-inductive-step-v2}
    \end{align}
and
    \begin{align}
        \label{eq:hgeofun-evenm-tildeHabc-inductive-step-v2}
        \begin{split}
            &\,\, (a+b+c) F_1(\tilde d_m +1;c,b;a+b+c;u,v) 
        \\ =&\,\, 
        a F_1(\tilde d_m +1;c,b;a+b+c+1;u,v) \\
        +& \,\, b F_1(\tilde d_m +1;c,b+1;a+b+c+1;u,v) \\
        +& \,\,
        c F_1(\tilde d_m +1;c+1,b;a+b+c+1;u,v).
        \end{split}
    \end{align}
    We prove \eqref{eq:hgeofun-evenm-tildeHabc-inductive-step-v2} as an example
    and left \eqref{eq:hgeofun-evenm-tildeKab-inductive-step-v2} to interested
    readers.
Let $F = F_1(\alpha
    ;\beta,\beta';\gamma;u,v)$, according to  
\cref{eq:hgeofun-f1a+-diffsys,eq:hgeofun-f1b'+-diffsys,eq:hgeofun-f1c+-diffsys},
    we can solve for
    $(u\partial_u+v\partial_v)F$ in two different ways:
 \begin{align*}
     (u\partial_u+v\partial_v)F &= (\gamma-1) (F(\gamma-) - F) \\
&= 
\beta(F(\beta+)-F) + \beta'(F(\beta'+)-F).
    \end{align*}
    Equation \ref{eq:hgeofun-evenm-tildeHabc-inductive-step-v2} follows as a special
     instance with parameters:
     $\alpha = \tilde
     d_m+1$, $\beta = c$, $\beta'=b$ and $\gamma = a+b+c+1$. 
\end{proof}

\begin{prop}
        \label{prop:tildeKab-tildeHabc} 
Assume that the dimension $m$ is even and greater or equal than $4$,
    for $j_m =(m-4)/2 \in \Z_{\ge0}$ and non-negative integers $a,b,c$, we have:
    \begin{align}
        H_{a,b}(u;m) &= \frac{d^{j_m}}{dz^{j_m}} \Big|_{z=0}
        (1-z)^{-a} (1-u-z)^{-b} ,
        \label{eq:hgeofun-evenm-tildeKab} \\
        H_{a,b,c}(u,v;m) &=
        \frac{d^{j_m}}{dz^{j_m}} \Big|_{z=0}
        (1-z)^{-a} (1-u-z)^{-b}(1-v-z)^{-c}.
        \label{eq:hgeofun-evenm-tildeHabc} 
    \end{align}
\end{prop}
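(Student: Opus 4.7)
The plan is to proceed by induction on the even dimension $m \ge 4$, equivalently on $j_m = (m-4)/2 \in \Z_{\ge 0}$. For the base case $j_m = 0$, the right-hand sides of the claimed identities collapse to $(1-u)^{-b}$ and $(1-u)^{-b}(1-v)^{-c}$. Via the translation \eqref{eq:H-alpha-to-FD^(n)}, which at $m = 4$ produces the trivial prefactor $\Gamma(\abs\alpha)/\Gamma(\abs\alpha) = 1$, I need only check
$H_{a,b}(u;4) = \pFq21(a+b,b;a+b;u) = (1-u)^{-b}$,
which is immediate from the second identity in \eqref{eq:hgeofun-specialcases}, and
$H_{a,b,c}(u,v;4) = F_1(a+b+c;b,c;a+b+c;u,v) = (1-u)^{-b}(1-v)^{-c}$,
which follows by specializing the Pfaff-type transformation \eqref{eq:appF1-F1syms} to $c = a$ and using the trivial reduction $F_1(0;b,b';\gamma;\cdot,\cdot) = 1$ (the defining Lauricella series \eqref{eq:hgeofun-LauricellaSeries} terminates at its constant term).

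For the inductive step, assume the identities hold at dimension $m$. Combining the recursion \eqref{eq:hgfun-relations-K-m+2-a+1-b+1} with the inductive hypothesis,
\begin{align*}
H_{a,b}(u;m+2) &= aH_{a+1,b}(u;m) + bH_{a,b+1}(u;m) \\
&= \frac{d^{j_m}}{dz^{j_m}}\Big|_{z=0}\bigl[a(1-z)^{-a-1}(1-u-z)^{-b} + b(1-z)^{-a}(1-u-z)^{-b-1}\bigr] \\
&= \frac{d^{j_m}}{dz^{j_m}}\Big|_{z=0}\frac{\partial}{\partial z}\bigl[(1-z)^{-a}(1-u-z)^{-b}\bigr] \\
&= \frac{d^{j_m+1}}{dz^{j_m+1}}\Big|_{z=0}(1-z)^{-a}(1-u-z)^{-b},
\end{align*}
and since $j_{m+2} = j_m + 1$ this proves \eqref{eq:hgeofun-evenm-tildeKab} at dimension $m+2$. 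The three-variable case proceeds in exact parallel, substituting \eqref{eq:hgfun-relations-H-m+2-a+1-b+1} for \eqref{eq:hgfun-relations-K-m+2-a+1-b+1} and using the three-term Leibniz expansion of $\partial_z\bigl[(1-z)^{-a}(1-u-z)^{-b}(1-v-z)^{-c}\bigr]$, whose three summands match exactly the three terms on the right-hand side of \eqref{eq:hgfun-relations-H-m+2-a+1-b+1}.

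The inductive step is essentially chain-rule pattern matching and should run without obstruction, since the two identities in the preceding proposition were tailor-made to mirror partial differentiation in an auxiliary variable. The only mildly delicate point is extracting the Appell reduction $F_1(a;b,b';a;x,y) = (1-x)^{-b}(1-y)^{-b'}$ that underpins the base case; once this is in hand from \eqref{eq:appF1-F1syms}, the remainder of the argument is bookkeeping.
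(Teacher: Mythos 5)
Your proof is correct and follows essentially the same route as the paper: induction on even $m$ with base case $m=4$ handled via the reductions $\pFq21(\alpha,\beta;\beta;z)=(1-z)^{-\alpha}$ and $F_1(a;b,b';a;x,y)=(1-x)^{-b}(1-y)^{-b'}$, and the inductive step supplied by the recurrences of \cref{prop:hgfun-relations-K-H} matching the Leibniz expansion of $\partial_z$. The only difference is that you spell out the two-variable base case explicitly, which the paper leaves to the reader.
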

\begin{proof}
    We only prove \cref{eq:hgeofun-evenm-tildeKab} as an example since the
    arguments are quite similar.
    When $m =4$, $\tilde d_m = a+b$,
    \cref{eq:hgeofun-evenm-tildeKab} follows from the identity:
    \begin{align*}
        \pFq21(\alpha,\beta;\beta,z) = \pFq21(\beta,\alpha;\beta,z)
        = (1-z)^{-\alpha}.
    \end{align*}
    Now assume that \cref{eq:hgeofun-evenm-tildeKab} holds for some even $m$,
    following the induction argument, we need to show that 
    \begin{align}
        H_{a,b}(u;m+2)& = \frac{d^{j_m}}{dz^{j_m}} \Big|_{z=0}
        \frac{d}{dz}\sbrac{ (1-z)^{-a} (1-u-z)^{-b} }\nonumber \\
        &= a H_{a+1,b}(u;m) + b H_{a,b+1}(u;m),    
        \label{eq:hgeofun-evenm-tildeKab-inductive-step}
    \end{align}
    which is one of the recurrence relations in \cref{prop:hgfun-relations-K-H}.

\end{proof}


\subsection{Divided Differences}
\label{subsec:divided-diff-notation}
We recall basic properties of divided difference in order to generalize   
\cref{eq:F_1-as-didiff-2F1}. 
For a one-variable function $f(z)$, the divided difference can be defined
inductively as below:
\begin{align*}
    f[x_0] & \defeq f(x_0) ;\\
    f[x_0, x_1, \dots, x_n] & \defeq 
    \left( 
        f[x_0, \dots, x_{n-1}] - f[x_1, \dots, x_{n}]
    \right)/ (x_0 - x_n)
\end{align*}
For example, 
\begin{align*}
    f[x_0,x_2] = (f(x_0) - f(x_1)) / (x_0 - x_1). 
\end{align*}
One can show in general: 
\begin{align}
    f[x_0, x_1, \dots, x_n] = \sum_{l=0}^n  
    f(x_l)  \prod_{s=0, s \neq l}^n (x_l -x_s)^{-1}.
    \label{eq:divided-gen-formula}
\end{align}
There is also a residue version:
\begin{align}
    \label{eq:ddiff-residue-version}
    f[z_1, \dots, z_n] = \frac{1}{2 \pi i} \int_{\tilde C}
    \frac{f(z)}{
        (z-z_1) \cdots (z-z_n)
    } dz,
\end{align}
where the contour circles around the points $z_1, \dots, z_n$ exactly once.

When $f$ is a  multivariable function, we  use a subscript to indicate the
variable on which the divided difference acts, for example, 
$f(x,z,y)[z_1, \dots, z_s]_z$. 
Through out the paper, we fix the variable $z$  as the default choice for the
divided difference operator: $[\bullet, \dots, \bullet] \defeq [\bullet,
\dots, \bullet]_z$.

Basic properties include:
\begin{enumerate}
    \item Symmetry: $f[x_0, \ldots,x_n]$ is a symmetric function, that is, one
        can freely permute the variables $x_0, \ldots, x_n$.  
    \item The Leibniz rule:
        \begin{align}
            \label{eq:dfiff-Leibniz}
            f(x_0) g[x_0, \ldots, x_n]
            (fg)[x_0, \ldots, x_n] = 
            f(x_0) g[x_0, \ldots, x_n] +
            g(x_0) f[x_0, \ldots, x_n]
            .
        \end{align}
    \item Composition rule:
        \begin{align}
            (f[y_1, \ldots, y_q,z])[x_1, \ldots,x_n]_z =
            f[y_0, \ldots, y_q, x_1, \ldots,x_n].
        \end{align}
    \item The confluent case: suppose  there are $a+1$ identical copies of $x$
        in the arguments, we define 
        \begin{align}
            f[y,x,\ldots,x] = \frac{1}{a!} \partial^a_x f[y,x].    
        \end{align}
\end{enumerate}

\subsection{Reduction to Gauss Hypergeometric Functions}
\label{subsec:reduction-to-GF21}

The reduction formula \cref{eq:F_1-as-didiff-2F1} admits a more elementary
proof that works for general Lauricella functions   $F_D^{(n)}$. 
It begins with another integral representation (compared with 
the one variable case in \cref{eq:hgeo-defn-pFq21}) of $F_D^{(n)}$:
    \begin{align}
        \label{eq:FD-int-rep-from-0-1}
      & \,\, F_D^{(n)}(a; \alpha_1, \dots, \alpha_n;c; z_1, \dots, z_n) 
        \nonumber \\
        =& \,\,
        \frac{\Gamma(c)}{\Gamma(a)\Gamma(c-a)}
        \int_0^1 t^{a-1} (1-t)^{c-a-1} (1-z_1 t)^{-\alpha_1} \cdots
        (1-z_nt)^{-\alpha_n} dt,
\end{align}
where we require $\Re c > \Re a$ for the converges of the integral. 
The essential observation is the following divided difference relation between
the following two factors in 
\cref{eq:hgeo-defn-pFq21,eq:FD-int-rep-from-0-1}, respectively:
\begin{align}
    \label{eq:pFq21-factor-ddiff-FD-factor}
    (z^{n-1}(1-z t)^{-1}) [ z_1, ,,,z_n] = (1-z_1 t)^{-1} \cdots (1-z_n t)^{-1}.
\end{align}
\begin{prop}
        \label{prop:FD-GF21-divideddiff}
        For $a\in \mathbb{C}$ and $c \in \mathbb{C} \setminus \Z_{\le 0}$,
        we have
    \begin{align}
        F_D^{(n)}(a,1,\dots,1,c; z_1, ,,,z_n) = 
        (z^{n-1} \pFq21(a,1,c))[ z_1, ,,,z_n]_z
        \label{eq:FD-GF21-divideddiff}
    \end{align}
\end{prop}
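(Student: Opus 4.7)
The strategy is a one-line computation after two preparations: an integral representation of the left-hand side and a partial-fraction identity for the product appearing in its integrand. First, I would apply the integral representation \cref{eq:FD-int-rep-from-0-1} with $\alpha_1=\cdots=\alpha_n=1$ (assuming $\Re c > \Re a$ so the integral converges; the general case follows by analytic continuation in $c$):
\begin{align*}
F_D^{(n)}(a;1,\ldots,1;c;z_1,\ldots,z_n)
= \frac{\Gamma(c)}{\Gamma(a)\Gamma(c-a)}\int_0^1 t^{a-1}(1-t)^{c-a-1}\prod_{l=1}^n(1-z_l t)^{-1}\,dt.
\end{align*}

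Next I would verify the key divided-difference identity \cref{eq:pFq21-factor-ddiff-FD-factor}. Fix $t$ and view $f(z)=z^{n-1}(1-zt)^{-1}$ as a rational function in $z$. By the explicit formula \cref{eq:divided-gen-formula},
\begin{align*}
f[z_1,\ldots,z_n]_z = \sum_{l=1}^n \frac{z_l^{n-1}}{(1-z_l t)\prod_{s\neq l}(z_l-z_s)}.
\end{align*}
On the other hand, writing $P(z)=\prod_i(z-z_i)$, the standard partial-fraction expansion of $z^{n-1}/P(z)$ evaluated at $z=1/t$ (and multiplied by $t$) gives precisely the same sum, and equals $t^{-n}\cdot \frac{1}{P(1/t)} = \prod_i(1-z_i t)^{-1}$. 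Hence $(z^{n-1}(1-zt)^{-1})[z_1,\ldots,z_n]_z = \prod_{i=1}^n(1-z_it)^{-1}$. For the confluent case where some $z_i$ coincide, the identity persists by continuity (or by the confluent definition of divided difference).

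The third step is to substitute this identity into the integrand and pull the divided-difference operator (which is a linear functional in $z$ depending only on the nodes $z_1,\ldots,z_n$) outside the $t$-integral. Since $t$ is real and the integrand depends analytically on $z$ in a neighborhood of the $z_l$'s, the interchange is justified by uniform convergence on compact sets. This yields
\begin{align*}
F_D^{(n)}(a;1,\ldots,1;c;z_1,\ldots,z_n)
= \left[\frac{\Gamma(c)}{\Gamma(a)\Gamma(c-a)}\int_0^1 t^{a-1}(1-t)^{c-a-1}\,z^{n-1}(1-zt)^{-1}\,dt\right]\![z_1,\ldots,z_n]_z.
\end{align*}
Finally, $z^{n-1}$ is independent of $t$, so it comes out of the $t$-integral, and the remaining integral is recognized via the symmetry $\pFq21(a,1;c;z)=\pFq21(1,a;c;z)$ and \cref{eq:hgeo-defn-pFq21} (with $b=a$ after swapping the first two parameters) as $\frac{\Gamma(a)\Gamma(c-a)}{\Gamma(c)}\pFq21(a,1;c;z)$, which gives exactly $z^{n-1}\pFq21(a,1;c;z)$ inside the divided difference, as claimed.

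\textbf{Main obstacle.} The only nonobvious point is the partial-fraction identity \cref{eq:pFq21-factor-ddiff-FD-factor}; the power $z^{n-1}$ in $f(z)$ is exactly what forces $\deg(\text{numerator}) = n-1 < n = \deg P$ so that the expansion has no polynomial part, producing the product $\prod(1-z_it)^{-1}$ cleanly. Everything else is bookkeeping: moving the divided difference past the $t$-integral, extracting $z^{n-1}$, and identifying the $\pFq21$ via parameter symmetry.
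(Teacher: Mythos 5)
Your proposal is correct, and its core coincides with the paper's: both arguments rest on the integral representation \cref{eq:FD-int-rep-from-0-1} together with the divided-difference identity \cref{eq:pFq21-factor-ddiff-FD-factor}, valid first under the convergence hypothesis $\Re c > \Re a$. You go further than the paper in one respect: you actually prove \cref{eq:pFq21-factor-ddiff-FD-factor} (via the partial-fraction expansion of $z^{n-1}/\prod_i(z-z_i)$ evaluated at $z=1/t$, with the degree count $n-1<n$ guaranteeing no polynomial part), whereas the paper merely states it as ``the essential observation.'' Your computation checks out, and the confluent case does follow by continuity. Where the two proofs genuinely diverge is in removing the restriction $\Re c>\Re a$: the paper runs a downward induction on $c$ through the contiguous relation $F(c)=c^{-1}(c+z\partial_z)F(c+)$, and the bulk of its proof is the verification (via the residue form \cref{eq:ddiff-residue-version} and integration by parts) that the Euler operators $P_n=\sum_l z_l\partial_{z_l}$ and $P_1=z\partial_z$ intertwine correctly with the divided difference. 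You instead invoke analytic continuation in $c$; this is legitimate and shorter, since for fixed $z_l$ in the domain of convergence both sides are analytic in $c$ on the connected set $\mathbb{C}\setminus\Z_{\le 0}$ (visible from the series \cref{eq:hgeofun-LauricellaSeries} and the finitely many evaluations in \cref{eq:divided-gen-formula}), and they agree on the nonempty open set $\Re c>\Re a$ --- though you should state this analyticity explicitly rather than leave it parenthetical. What the paper's longer route buys is a proof that stays entirely within the differential/contiguous-relation calculus it develops in \S\ref{sec:diff-and-recursive-relations}, and in particular the commutation of $P_n$ with the divided difference, which is of independent use; what yours buys is brevity and a self-contained verification of the key identity.
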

\begin{proof}
When  $\Re c > \Re a$, \eqref{eq:FD-GF21-divideddiff} follows from 
\cref{eq:pFq21-factor-ddiff-FD-factor}.
To remove the restriction 
 $\Re c > \Re a$, it sufficient to show that
\eqref{eq:FD-GF21-divideddiff} is preserved with respect to the differential
relation:
\begin{align*}
     F(c) &= c^{-1}(c+z\partial_z) F(c+), \\
    F_D^{(n)}(c) &=  
    c^{-1}(c+z_1\partial_{z_1} + \cdots + z_n \partial_{z_n})F_D^{(n)}(c+),
\end{align*}
 where, on the right hand side,
 we have borrowed the $c\pm$ notation from the discussion of contiguous
 relations of the Gauss hypergeometric functions, to indicate adding or
 subtracting $1$ regarding to the parameter $c$,   
see for instance, \cref{eq:hgfun-diff-relations-GF21-a+1}.
 The underlying functions are
\begin{align*}
    F = \pFq21 (a,1;c;z), \, \, 
    F_D^{(n)}(c) = F_D^{(n)}(a,1,\ldots,1;c;  z_1, \ldots, z_n).
\end{align*}
It follows  that 
\cref{eq:FD-GF21-divideddiff} holding
for some $c$
implies that it holds for $c-1$ as well. To be more specific, let us assume 
\cref{eq:FD-GF21-divideddiff} holds for some $c+1$, we claim that
\begin{align*}
    F_D^{(n)}(c) &=  
      c^{-1}(c+z_1\partial_{z_1} + \cdots + z_n \partial_{z_n})
    (z^{n-1} F(c+;z)) [ z_1, ,,,z_n]_z \\
    &=
    (z^{n-1} ( c^{-1}(c+z\partial_z))F(c+;z)) [ z_1, ,,,z_n]_z \\
    &= (z^{n-1} F(c;z)) [ z_1, ,,,z_n]_z, 
\end{align*}
in which the non-trivial step is to show that 
\begin{align}
    P_n (z^{n-1} F(c+;z)) [ z_1, ,,,z_n]_z =  
    (z^{n-1} P_1 F(c+;z)) [ z_1, ,,,z_n]_z, 
    \label{eq:claim-P_n-P_1}
\end{align}
where $P_n$ and $P_0$ stand for the following differential operators:
\begin{align*}
    P_n = z_1\partial_{z_1} + \cdots + z_n \partial_{z_n}, \,\,
    P_1 = z\partial_z
    .
\end{align*}
To shorten the proof, 
we make use of the residue version of divided difference given in 
\cref{eq:ddiff-residue-version}. 
One can of course use \cref{eq:divided-gen-formula} to complete the argument
which does not require $f$ to be analytic. 
Observe that $z_l \partial_{z_l} (1-z_l/z)^{-1}
= -z\partial_z (1-z/z_l)^{-1}$, furthermore:
\begin{align*}
    P_n \brac{ \frac{1}{(1-z_1/z) \cdots (1- z_n/z)}  }
    = - P_1 \brac{ \frac{1}{(1-z_1/z) \cdots (1- z_n/z)}  }.
\end{align*}
 We pass the differentials through the
contour integral and then apply integration by parts to reach 
\cref{eq:claim-P_n-P_1}:
\begin{align*}
 &\, \,    P_n (z^{n-1} F(c+;z)) [ z_1, ,,,z_n]_z
    =  
\frac{1}{2 \pi i} \int_{\tilde C}
P_n \brac{
\frac{ z^{n-1} F(c+;z)}{
        (z-z_1) \cdots (z-z_n)
 } } dz \\
 =&\, \, 
\frac{1}{2 \pi i} \int_{\tilde C}
z^{-1} F(c+;z) 
P_n \brac{
\frac{1}{
        (1-z_1/z) \cdots (1-z_n/z)
 } } 
  \\
 =&\, \,   
\frac{1}{2 \pi i} \int_{\tilde C}
z^{-1} F(c+;z) 
 (-z\partial_z) \brac{
\frac{1}{
        (1-z_1/z) \cdots (1-z_n/z)
 } }
  \\
 =&\, \,  
\frac{1}{2 \pi i} \int_{\tilde C}
\frac{\partial_z F(c+;z) }{
        (1-z_1/z) \cdots (1-z_n/z)
 }
 = 
 \frac{1}{2 \pi i} \int_{\tilde C}
\frac{ z^{n-1} (z\partial_z) F(c+;z) }{
        (z-z_1) \cdots (z-z_n)
 }
 \\
    = &\, \, 
    (z^{n-1} P_1 F(c+;z)) [ z_1, ,,,z_n]_z .
\end{align*}
We add the assumption that 
$c \in \mathbb{C} \setminus \Z_{\le 0}$ because the inductive argument fails
for $c=0$. 
\end{proof}

In conclusion, we have obtained a complete reduction of $H_{\alpha}(\bar
z;m;j)$ to the one variable family $H_{a,1}(z;m;j)$ by iterated
divided differences and differentials.
\begin{thm}
    \label{thm:red-to-GF21}
    For the spectral functions in the $V_j$-term computation, we have the
    following two-step reduction to the Gaussian hypergeometric functions    
    $H_{a,1}(z;m;j)$ generated by differentials and divide differences
    respectively: 
    \begin{align}
        H_{\alpha_0, \alpha_1, \dots,\alpha_n}
            (z_1, \dots , z_n;m;j)  &=
            \frac{ 
                \partial_{z_1}^{\alpha_1-1} \cdots
\partial_{z_n}^{\alpha_n-1}
}{
    (\alpha_1-1)!  \cdots (\alpha_n-1)!
} H_{\alpha_0, 1, \dots,1} 
(z_1, \dots , z_n;m;j),
            \label{eq:Ha0an-to-Ha011} \\
        H_{a,1, \dots , 1}(z_1, \dots, z_n;m;j) 
        &=(z^{n-1} H_{a+n-1,1}(z;m;j))[z_1, \dots, z_n ]_z   
            \label{eq:Ha011-to-Ha01-divdiff} 
    \end{align}
\end{thm}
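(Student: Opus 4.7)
The plan is to prove the two reduction formulas separately, with the first following directly from the integral representation of $H_\alpha$ and the second reducing to the already-established Proposition~\ref{prop:FD-GF21-divideddiff} via the identification in~\eqref{eq:H-alpha-to-FD^(n)}.

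For \eqref{eq:Ha0an-to-Ha011}, I would work directly from the definition \eqref{eq:H-alpha-defn}. Differentiating under the integral sign gives
\begin{align*}
  \partial_{z_l} H_{\alpha_0,\dots,\alpha_n}(\bar z;m;j)
  = \Gamma(d(\alpha;m;j)+2)\int_{\triangle^n}\omega_\alpha(u)\,u_l
  \bigl(1-\bar z\cdot u\bigr)^{-d(\alpha;m;j)-2}\,du.
\end{align*}
The key observation is that raising $\alpha_l$ by one simultaneously (i) replaces $d(\alpha;m;j)$ by $d(\alpha;m;j)+1$ in view of \eqref{eq:d-alpha-m-j-defn} and (ii) replaces $\omega_\alpha$ by $\omega_{\alpha+e_l} = (u_l/\alpha_l)\,\omega_\alpha$ using $\Gamma(\alpha_l+1) = \alpha_l\Gamma(\alpha_l)$ in \eqref{eq:omega-alpha-u-defn}. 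Comparing the two integral expressions yields the single-step relation $\partial_{z_l} H_{\dots,\alpha_l,\dots} = \alpha_l\, H_{\dots,\alpha_l+1,\dots}$, which iterated in each variable $l=1,\dots,n$ produces \eqref{eq:Ha0an-to-Ha011}.

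For \eqref{eq:Ha011-to-Ha01-divdiff}, the plan is to translate both sides into Gauss/Lauricella notation via \eqref{eq:H-alpha-to-FD^(n)} and then invoke Proposition~\ref{prop:FD-GF21-divideddiff}. For the left-hand side with $\alpha=(a,1,\dots,1)$ (so $|\alpha|=a+n$ and $\tilde d_{\alpha,j,m} = a+n+(m-j)/2 - 1$, which I denote $\tilde d$), equation \eqref{eq:H-alpha-to-FD^(n)} gives
\begin{align*}
  H_{a,1,\dots,1}(\bar z;m;j) = \tfrac{\Gamma(\tilde d)}{\Gamma(a+n)}\,
  F_D^{(n)}\bigl(\tilde d;1,\dots,1;a+n;\bar z\bigr).
\end{align*}
For the single-variable right-hand side with $\alpha=(a+n-1,1)$, we obtain $|\alpha| = a+n$ and the \emph{same} $\tilde d$, so
\begin{align*}
  H_{a+n-1,1}(z;m;j) = \tfrac{\Gamma(\tilde d)}{\Gamma(a+n)}\,
  {}_2F_1\bigl(\tilde d,1;a+n;z\bigr).
\end{align*}
Since the prefactors match and divided differences are linear in the function, the claim reduces to
\begin{align*}
  F_D^{(n)}(\tilde d;1,\dots,1;a+n;\bar z)
  = \bigl(z^{n-1}\,{}_2F_1(\tilde d,1;a+n;z)\bigr)[z_1,\dots,z_n]_z,
\end{align*}
which is precisely Proposition~\ref{prop:FD-GF21-divideddiff}.

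The potential obstacle lies not in the core computation but in bookkeeping: one must verify that the exponent $d(\alpha;m;j)$ tracks correctly under $\alpha_l \mapsto \alpha_l+1$ in the first step, and that the parameters $(\tilde d, a+n)$ genuinely coincide between $H_{a,1,\dots,1}$ (as an $(n+1)$-index function) and $H_{a+n-1,1}$ (as a $2$-index function) in the second step. Both reduce to elementary arithmetic with $|\alpha|$, so no serious difficulty is anticipated once Proposition~\ref{prop:FD-GF21-divideddiff} is available.
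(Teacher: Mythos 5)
Your proposal is correct and follows essentially the same route as the paper: the first reduction is the Lauricella differential relation $F_D(\alpha_q+)=\alpha_q^{-1}\partial_{z_q}F_D$ (which you simply re-derive from the integral representation \eqref{eq:H-alpha-defn}, with the bookkeeping of $d(\alpha;m;j)$ and $\omega_\alpha$ checking out), and the second is exactly the paper's appeal to \cref{prop:FD-GF21-divideddiff} through the parameter identification \eqref{eq:H-alpha-to-FD^(n)}, where your verification that $\tilde d$ and $|\alpha|=a+n$ coincide for $(a,1,\dots,1)$ and $(a+n-1,1)$ is the only point the paper leaves implicit.
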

\begin{rem}
      We compare the result to 
      \cite[\S4,\S5]{leschdivideddifference}, in which the spectral
      functions are generated in the same fashion by the  modified logarithm
      function $\mathcal L_0$ (first observed in
      \cite{MR2907006}), which, in particular,  is a  hypergeometric function: 
\begin{align*}
    H_{1,1}(1-z;2) = \pFq21(1,1;2;1-z) = \frac{\ln z}{z-1}
    = \mathcal L_0(z).
\end{align*}
\end{rem}
\begin{proof}
    The differential reduction \eqref{eq:Ha0an-to-Ha011} follows from the
    differential relations:
    \begin{align*}
        F_D(\alpha_q +) = (\alpha_q)^{-1} \partial_{z_q} F_D , \,\,\,
        1\le q \le n,
    \end{align*}
    where $F_D = F_D(a;\alpha_1, \dots , \alpha_n; z_1, \dots, z_n)$.

   The second equation 
    \eqref{eq:Ha011-to-Ha01-divdiff} follows from \cref{eq:FD-GF21-divideddiff}
    according to the change of parameters \cref{eq:H-alpha-to-FD^(n)}.
\end{proof}

\section{ 
    Pseudo-differential Approach to Heat Trace Asymptotic on $\T^m_\theta$ 
}
\label{sec:symbloic-computation}

\subsection{Notations for $\T^m_\theta$}

For any skew-symmetric matrix $\theta = ( \theta_{ij}) \in M_{m \times
m}(\R)$, the  corresponding noncommutative $m$-torus $C(\T^m_\theta)$ is
the universal $C^*$-algebra generated by $m$ unitary elements $U_1, \ldots,
U_m$, that is
\begin{align*}
    U_j U_j^* = U_j^* U_j =1, \, \, \, \, j=1, \ldots,m,
\end{align*}
subjected to the relation
\begin{align*}
    U_i U_j = e^{2 \pi i \theta_{ij}} U_j U_i, \, \, \, \, 
    1\le  i,j \le  m.
\end{align*}
The smooth noncommutative torus $C^\infty(\T^m_\theta)$ is 
a dense subalgebra consisting of elements of the form
\begin{align}
    \label{eq:a=sum-abarl-defn-notations}
    a = \sum_{\bar l \in \Z^m} a_{\bar l} U^{\bar l}, \, \,  l=(l_1, \ldots, l_m),
    \, \,  U^{\bar l} \defeq U_1^{l_1} \cdots U_m^{l_m},
\end{align}
where the coefficients $a_{\bar l} \in  \mathbb{C}$ are of rapid decay in $\bar
l$, namely, 
$\abs{ a_{\bar l} P(\bar l)}$ is bounded for any polynomial $P(\bar l)$ in
$\bar l$. 
There is a tracial
functional $\varphi_0$ implemented by  taking the zero-th coefficients
in the series \cref{eq:a=sum-abarl-defn-notations}: 
\begin{align}
    \label{eq:varphi_0-defn-notations}
    \varphi_0: C^\infty(\T^m_\theta) \to  \mathbb{C}: \, \, 
    \varphi_0(a) = 
    \varphi_0 \brac{ \sum_{\bar l \in \Z^m} a_{\bar l} U^{\bar l}}
    \defeq a_0.
\end{align}
It gives rise to a inner product
$\abrac{\cdot ,\cdot } \defeq \abrac{\cdot , \cdot }_{\varphi_0}$
\begin{align*}
    \abrac{a,b} = \varphi_0(b^* a), \, \, \forall a,b \in  C^\infty(\T^m_\theta).
\end{align*}
The associated GNS-representation (of $C^\infty(\T^m_\theta)$) is the Hilbert
obtained as the completion of  the smooth functions with respect to the inner
product
\begin{align}
    \label{eq:cal-H_0-defn}
  \mathcal H_0 \defeq \overline{C^\infty(\T^m_\theta)}^{\abrac{\cdot ,\cdot }} ,  
\end{align}
which plays the role of $L^{2}$-functions on $\T^m_\theta$ .

As shown in \cref{eq:a=sum-abarl-defn-notations}, $C^\infty(\T^m_\theta)$
admits a $\Z^m$ grading  that induces a $\T^m$ action by demanding $\{U^{\bar
l}\}_{\bar l \in  \Z^m}$ to be the eigenvectors. At the infinitesimal level,
the natural basis of the Lie algebra $\R^m$  gives rise to the basic derivations
that generate the algebra of differential operators. 
\begin{defn}
    \label{defn:basic-derivation-del-defn}
    The basic derivations $\delta_1, \ldots, \delta_m$ on
    $C^\infty(\T^m_\theta)$ described above can be defined on 
generators of $C^\infty(\T^m_\theta)$ as below 
\begin{align}
    \label{eq:basic-derivation-del-s}
    \delta_s (U_j) = \mathbf 1_{sj} U_j, \, \, \, \, 
    1\le  s,j \le m,
\end{align}
and then extended to  the whole algebra by the Leibniz property. 
Here $\mathbf 1_{sj}$ stands for the Kronecker-delta symbol\footnote{
The symbol $\delta$ has been taken by the basic derivations.}.
In later computation, we shall also use $\nabla_l \defeq -i \delta_l$ which,
from deformation perspective,
corresponding to the partial derivative $\partial_{x_l}$ or the covariant
derivative along $\partial_{x_l}$: 
$\nabla_{\partial_{x_l}} = \partial_{x_l}$, where $\nabla$ is the flat
connection and $(x_1, \ldots, x_m)$ is the coordinate system on
$\T^m \cong \R^m/ (2\pi  \Z)^m$.  
\end{defn} 


\subsection{Symbols of Pseudo-differential Operators}
\label{subsec:symbols-of-pseudo-ops}
We assume the reader's acquaintance with Connes's
pseudo-differential calculus \cite{connes1980c} and notes by Baaj
\cite{MR967366,MR967808}. 
We mention some  recent papers with new contributions to the contraction of
pseudo-differential calculi:
\cite{MR3540454} extends the
contraction to pseudo-differential operators acting on bundles (Heisenberg
modules); \cite{MR3985230,MR3985231} give a detailed description of the 
theory on all noncommutative $m$ tori; the author \cite{Liu:2015aa,LIU2017138} 
achieves one on toric
noncommutative manifold by applying the $\theta$-deformation machinery 
onto Widom's intrinsic pseudo-differential calculus on smooth manifolds.

With the refers listed above,
we only review how to compute the symbol of differential operators and the
notion of order for pseudo-differential operators or their symbols, which are
sufficient to under the algorithm of computing heat coefficients.

Roughly speaking,   the symbols are coming from
Fourier transforms of pseudo-differential operators that belong to
$ C^\infty(\R^m, C^\infty(\T^m_\theta))$ viewed as smooth functions on the
cotangent bundle on $\T_\theta^m$.  
Let $\xi = (\xi_1, \ldots, \xi_m)$ be a coordinate system on $\R^m$, then  
the symbols of the basic derivations are mapped to the coordinate functions:
$\sigma(\delta_l) = \xi_l$,
 $1\le l \le m$. Furthermore, for a general differential operator $P = \sum
 a_\alpha \delta^\alpha$, with $\alpha$ runs over a finite collection of
 multiindices in $\Z_{\ge  0}^{m}$, the symbol $\sigma(P) = \sum a_\alpha
 \xi^{\alpha}$ is polynomial in $\xi$ with coefficients $a_\alpha \in
 C^\infty(\T^m_\theta)$.

 The notion of parametric symbols 
 $ S\Sigma (\R^m \times  \Lambda, C^\infty(\T^m_\theta))$
 is required in the pseudo-differential approach to the heat trace asymptotic.
The conic region $\Lambda \subset \mathbb{C}$ serves as the domain of the
resolvent parameter $\lambda$ in \cref{eq:heat-op-holo-cal-defn}.
Elements $p(\xi,\lambda)$ are functions smooth in
 $\xi$ and analytic in $\lambda$ with value in $C^\infty(\T^m_\theta)$.
 Both the algebra of pseudo-differential operators and symbols admit
 a filtration which extends the notion of  \emph{order} of differential
 operators,
\begin{align*}
    S\Sigma
    = \bigcup_{d=-\infty}^\infty S\Sigma^d
    \subset C^\infty(\R^m \times \Lambda, C^\infty(\T^m_\theta)) .
\end{align*}
We will only encounter homogeneous symbols that are representatives of the
corresponding graded algebra:  
$\oplus_{d=-\infty}^\infty S\Sigma^d / S\Sigma^{d-1}$.  
Homogeneity of  degree $d \in  \Z$ is defined as below: 
\begin{align}
    \label{eq:homogeneity-d-symbols}
    p(c \xi, c^2 \lambda) = c^d p(\xi, \lambda), \, \, \, \, 
    \forall  c>0 .
\end{align}
We remark that the resolvent parameter $\lambda$ is labeled as  a degree $2$
symbol because
only the heat trace of second order differentials operators  are concerned in
the paper. 
For a non-parametric symbol (i.e. independent of $\lambda$), say 
a polynomial symbol in $\xi$ (of a differential operator), 
the homogeneity in \cref{eq:homogeneity-d-symbols}
reflects the order of differentiation. For instance, the symbol of the flat
Laplacian is identical to the classical counterpart
\begin{align*}
    \sigma(\Delta) = \abs\xi^2 \defeq \sum_{l=1}^m \xi_l^2.
\end{align*}
The pseudo-differential symbols appeared in later computations
(see \cref{eq:b_j-finite-sum}) are  generated by 
polynomial symbols and 
the resolvent symbol of order $-2$ :
\begin{align*}
    b_0 (\xi, \lambda) = (e^h \abs\xi^2 - \lambda)^{-1}, \, \, \, \, 
    h = h^* \in  C^\infty(\T^m_\theta).
\end{align*}
\subsection{Metric Structures}
\label{subsec:metric-structure}
For the metric structure,
we recall on the commutative two torus, the Riemann metrics can be parametrized
by $(\tau,h)$ in which the modulus $\tau \in  \mathbb{C}$ with $\Im \tau >0$
indicates the conformal class (through the correspondence between complex
and conformal structures), 
while $h \in C^\infty(\T^2, \R)$ is a smooth real value function whose
exponential $k = e^h$ is the Weyl
factor parametrizing the conformal change of metrics.  

The analogue on $C^\infty(\T^2_\theta)$ was implemented as a modular spectral
triple
in \cite[\S1]{MR3194491}. 
We recall only the associated Laplacian operators acting on functions.
Let us fix the conformal class  $\tau = \sqrt{-1}$ whose
 flat (Dolbeault) Laplacian is given by $\Delta = \delta_1^2 + \delta_2^2$.  
For a Weyl factor $k = e^h$ with self-adjoint $h \in  C^\infty(\T^2_\theta)$, 
the new Laplacian under the conformal change of metric is given by 
\begin{align}
    \label{eq:Del-varphi-defn}
  \Delta_\varphi = k^{\frac{1}{2}} \Delta k^{\frac{1}{2}}: 
  \mathcal H_0\to \mathcal H_0 ,
\end{align}
where the Hilbert space $\mathcal H_0$ is defined in \cref{eq:cal-H_0-defn}. 
The construction of $\Delta_\varphi$ is modeled on the Dolbeault Laplacian
$\bar \partial^*_\varphi \bar \partial$, in which $\bar \partial$ is untouched
since we are in the same conformal class while $\bar \partial^*_\varphi$
indicates that the adjoint is taken with respect to a new inner product 
\begin{align}
    \label{eq:varphi-defn}
    \abrac{a,b}_\varphi \defeq \varphi(b^* a), \, \, \, \,  \forall  a,b 
    \in  C^\infty(\T^2_\theta),
\end{align}
and $\varphi$ is the rescaled volume weight: 
\begin{align*}
    \varphi(a) \defeq \varphi_0(a  k^{-1} ), \, \,  \forall  
    a \in  C^\infty(\T^2_\theta) .
\end{align*}
The word ``modular'' comes from the modular theory for von Neumann algebras.
The new volume functional is only a weight with the following 
KMS (Kubo-Martin-Schwinger) property: 
\begin{align}
    \label{eq:KMS-cond-defn}
    \varphi(a b) = \varphi \brac{b \mathbf y(a)}, \, \, 
    \mathbf y(a) \defeq \mathbf y_h(a) = k^{-1}(a) k, \, \, \, \, 
\forall a \in C^\infty(\T^2_\theta) .
\end{align}
In \cite{MR3194491}, 
$\mathbf y$ and its logarithm $\mathbf x  \defeq \mathbf x_h = \log \mathbf
y =[\cdot , h]$ are called the \emph{modular operator} and  the 
\emph{modular derivation} respectively, because they are a generator (resp.
infinitesimal generator) of the one-parameter
group of modular automorphisms $\mathbf y^{-it}$, $t \in  \R$ 
of the weight $\varphi$.

The Riemannian geometry on higher dimensional noncommutative tori is still an
open area for research. Ponge recently proposed a construction for general
Riemannian metrics and Laplace-Beltrami operators \cite{ha2019laplace}.
Nevertheless, the main focus of the paper is to investigate the universal
structures of the heat coefficients. 
For general noncommutative $m$-tori $C^\infty(\T^m_\theta)$,  
we shall directly take the result from dimension two, that is to simply view  
\begin{align*}
    \Delta \to  \Delta_\varphi = k^{\frac{1}{2}} \Delta k^{\frac{1}{2}}
\end{align*}
as a conformal change of metric $g_0 \to  g = k g_0$ 
with respect to the Weyl factor $k$, where 
 \begin{align}
     \label{eq:flat-Laplacian-m}
  \Delta = \delta_1^2 + \cdots + \delta_m^2   
 \end{align}
is the flat Laplacian  representing the standard Euclidean  metric $g_0$.

\subsection{Heat Trace Asymptotic and Local Invariants}
\label{subsec:heat-trace-asym-local-invariants}

After identifying the metric with a geometric differential operator, in our
case the Laplacian $\Delta_\varphi$, we would like to study the small time
asymptotic heat trace functional: $a \in  C^\infty(\T^m_\theta)$: 
 \begin{align}
     \label{eq:small-time-heat-asym-functional}
    a \to  \Tr \brac{ a e^{-t\Delta_\varphi}} \sim_{t \searrow 0}
    \sum_{j=0}^\infty V_j(a , \Delta_\varphi) t^{(j-m)/ 2} ,
\end{align}
whose coefficients yield the corresponding local invariants. 
The locality refers to the fact that the functionals $a \to  V_j(a
, \Delta_\varphi)$ are absolutely continuous to the flat volume functional
$\varphi_0$ (cf. \cref{eq:varphi_0-defn-notations}) with Radon-Nikodym
derivative $R^{(j)}_{\Delta_\varphi} \in  C^\infty(\T^m_\theta)$ (also referred
as functional densities in the paper): 
\begin{align}
    \label{eq:densities-V_j-defn}
    V_j(a, \Delta_\varphi) = \varphi_0 \brac{ a R^{(j)}_{\Delta_\varphi} },
    \, \,  
    \forall  a \in  C^\infty(\T^m_\theta) .
\end{align}

The analogy is made   
 the spectral geometry of Riemannian manifolds. Let $\Delta$ be the scalar
 Laplacian operator on a closed Riemannian manifold $(M,g)$. Same expansion as
 in \cref{eq:small-time-heat-asym-functional} holds, in which all the odd
 coefficients are zero and for even $j$, the functional density consists of
 local invariants that can be written as polynomials in the components of the
 $j$-th derivative of  metric tensor. In particular, upto a universal
 constant depending on $m$,  the $V_0$-term 
 coefficient  captures the volume functional and the $V_2$-term   reveals the
 scalar curvature function $\mathcal S_g \in  C^\infty(M)$ of the metric: 
\begin{align*}
    V_0(f, \Delta) = \int_M f d\mu_g, \, \, \, \, 
    V_2(f, \Delta) = \int_M f \brac{\mathcal S_g / 6} d\mu_g,
    \, \, \, \,  \forall  f\in  C^\infty(M). 
\end{align*}

Therefore, we shall call the functional density of 
$R^{(2)}_{\Delta_\varphi} \in  C^\infty(\T^m_\theta)$
in \cref{eq:densities-V_j-defn} the modular curvature of $\Delta_\varphi$. 
The word ``modular'' indicates the significant role of the modular
operator/derivation (see \cref{eq:KMS-cond-defn}) in the final result of 
$R^{(2)}_{\Delta_\varphi}$. Also, we do not specify 
$R^{(2)}_{\Delta_\varphi}$ as ``modular scalar curvature'' because
the model of conformal change of metric is not strong enough at the current stage. 

%

In the theory of elliptic operators on  manifolds, the heat family with
$t>0$ of an elliptic operator $P$ can be defined using
holomorphic functional calculus:
\begin{align}
    \label{eq:heat-op-holo-cal-defn}
    e^{-t P}  = \frac{1}{ 2 \pi i} \int_C e^{-t\lambda} (P- \lambda)^{-1}
    d\lambda,
\end{align}
where $C$ is a suitable contour winding around the spectrum of $P$. 
Same formula holds  for $ P = \Delta_\varphi$, 
provided the fact that $\Delta_\varphi$ an unbounded
operator on the Hilbert space $\mathcal H_0$ (cf. \cref{eq:cal-H_0-defn}) with
discrete spectrum contained in $[0,\infty)$.  

Two obstacles are presented when looking at the heat trace $\Tr(e^{-tP})$
in terms of \cref{eq:heat-op-holo-cal-defn}: 
\begin{enumerate}[i)]
    \item the resolvent $(P- \lambda)^{-1}$ involving the inverse of
a differential operator;
\item a trace formula for pseudo-differential operators.
\end{enumerate}

\subsection{Resolvent Approximation}


The pointwise multiplication between symbols does not
reflects the composition of the associated pseudo-differential operators.
The heart of a symbol calculus is the construction of a formal $\star$ product
which doest the job. 
Let $P$ and $Q$ be two pseudo-differential operators
with symbol $p$ and $q$ respectively. Then the symbol of their composition has
a formal expansion
\begin{align*}
    \sigma(PQ) =  p \star q \backsim
    \sum_{j=0}^\infty a_j(p,q),
\end{align*}
where each $a_j(\cdot,\cdot)$ is a bi-differential operator such that $a_j(p,q)$
reduces the total degree by $j$ and ``$\sim$'' means modulo lower degree terms.

The $\star$-product is not canonical. On manifolds, any choice of a connection
yields a construction. In Connes's pseudo-differential calculus, the formulas
of $a_j$ shares the same structure as those coming from a flat connection on
manifolds: 
\begin{align}
    a_j(p,q) = \frac{(-i)^j}{j!} (D^j p) \cdot (\nabla^j q)
    \defeq
    \frac{(-i)^j}{j!}
    \sum_{\mu_1, \ldots, \mu_j=1}^m
    (D^j p)_{\mu_1 \cdots \mu_n}
    (\nabla^j q)_{\mu_1 \cdots \mu_n}
    ,   
    \label{eq:b2cal-a_j-flatconnection}
\end{align}
where $D = D_\xi$ 
is the vertical differential while $\nabla =-i \delta$ is the horizontal
differentials implemented by the basic derivations (cf.
\cref{defn:basic-derivation-del-defn}). We borrow the tensor notations in
differential geometry: the two rank $j$ tensors 
$ D^j p$ and $\nabla^j q$ 
 are contravariant and covariant  respectively, so that the contraction
 $\cdot$ applies. Their  components are given by partial derivatives 
\begin{align*}
    (D^j p)_{\mu_1 \cdots \mu_n} = 
    D_{\xi_{\mu_1}} \cdots  D_{\xi_{\mu_n}} (p), 
\, \, \, \, 
(\nabla^j p)_{\mu_1 \cdots \mu_j} = (-i)^j
\delta_{\mu_1} \cdots \delta_{\mu_j} (q) .
\end{align*}

Let $P$ be an second order elliptic operator acting on $C^\infty(\T^m_\theta)$.
From the discussion in \S\ref{subsec:symbols-of-pseudo-ops}, 
the symbol is a polynomial of degree two with the decomposition:
\begin{align}
    \label{eq:sym-P=p_2+p_1+p_0}
    \sigma(P -\lambda) (\xi, \lambda) = p(\xi) = p_2 (\xi,\lambda) 
    + p_{1} (\xi) + p_0 (\xi)
\end{align}
with $p_j$ is of degree $j$ in $\xi$, $j=0,1,2$ and 
$p_2 (\xi,\lambda) = p_2(\xi) - \lambda$. Notice that $\lambda$ is of degree
$2$  according to \cref{eq:homogeneity-d-symbols} thus get matched with  the
leading term of $\sigma(P)$.
Suppose that the resolvent approximation is of the form
\begin{align*}
    \sigma \brac{ (P-\lambda)^{-1}} \sim 
    b_0(\xi,\lambda) + b_1(\xi,\lambda) + \cdots ,
\end{align*}
where $b_j(\xi,\lambda)$ is of degree $-j-2$, $j=0,1,2,\ldots$.  
They are subject to the equation  
    \begin{align}
        \label{eq:bjsum-star-pmu=1}
        (b_0+b_1+ \cdots) \star \sigma(P - \lambda) \backsim 1,
    \end{align}
    which can be further decomposed 
by grouping terms with the same  homogeneity $i=0,-1,-2,\ldots$: 
\begin{align*}
    \sum_{i=0}^{-\infty}
    \brac{
    \sum_{
    -2-l-\mu+\nu = i 
}
    a_\mu(b_l, p_\nu ) 
    }
    \sim 1,
\end{align*}
where $\nu = 0,1,2$ and $\mu,l \in  \Z_{\ge 0}$.  
For $i=0$, we have only one term $a_0(b_0,p_2) = b_0 p_2 = 1$, the ellipticity
of $P$ implies the existence of the resolvents:     
\begin{align*}
    b_0 (\xi,\lambda) = (p_2 (\xi) - \lambda)^{-1} \in  
    C^\infty(\R^m \times  \Lambda,C^\infty(\T^m_\theta)). 
\end{align*}
One can recursively construct the rest of $b_j$, $j=1,2,\dots$, by looking at the
corresponding equations for $i=-1,-2, \ldots$. 
\begin{align}
    \label{eq:b_j-general-form}
    b_j = \brac{
        \sum_{
        \substack{
            l+\mu - \nu +2= j
    \\ 0\le l\le j-1, \, \,  0\le \nu \le 2
        }
        } a_\mu(b_l, p_\nu)
    } 
    (-b_0).
\end{align}
We shall make use of only the first two terms in the paper:
\begin{align}
    \label{eq:b2cal-b1term-gen}
 b_1 &= [ \mathit{a}_0\left(b_0,p_1\right)+\mathit{a}_1\left(b_0,p_2\right)](-b_0),
\\
 b_2 &= 
[\mathit{a}_0\left(b_0,p_0\right)+\mathit{a}_0\left(b_1,p_1\right)
+\mathit{a}_1\left(b_0,p_1\right)+\mathit{a}_1\left(b_1,p_2\right)
+\mathit{a}_2\left(b_0,p_2\right)]
(-b_0).
\label{eq:b2cal-b2term-gen}
\end{align}
The number of summands of $b_j$  increases dramatically as $j$ goes up.    
Nevertheless, they are conceptually simple. 
After expanding  the bi-differential operators  $a_\mu(p_\nu,
b_l)$,  $b_j$'s  are finite sums (could be very lengthy\footnote{ 
    For $b_4$-term, we quote from \cite{2016arXiv161109815C}:
``the process of calculating this term involves exceedingly lengthy expressions
and at times involves manipulations on a few hundred thousand terms'' 
}) of the form:   
\begin{align}
    \label{eq:b_j-finite-sum}
   b_j = \sum b_0^{\alpha_0} \rho_1 b_0^{\alpha_1} \rho_2
   \cdots b_0^{\alpha_{n-1}}
    \rho_n b_0^{\alpha_n} ,
\end{align}
where the exponents $\alpha$'s are non-negative integers and the $\rho \defeq
\rho(x)$'s are the derivatives of the symbols of $P$, namely,
$p_\nu$, $\nu=0,1,2$  (in particular, no inverse operation involved and
independent of $\lambda$). 



\subsection{The Application of the Rearrangement Lemma}
\label{subsec:application-of-rearr-lem}
We reach the heat trace via a trace formula that computes the operator
trace of a trace-class pseudo-differential operators in terms of its symbol.
For example, \cite[Theorem 6.1]{MR3540454} 
is one of the  parametric version of such results designed for proving
heat trace asymptotic.
\begin{prop}
    The heat trace asymptotic \cref{eq:small-time-heat-asym-functional} exists, 
    in which the $j$-th heat coefficient is determined by  
the resolvent approximation $b_j$ in the following way:
\begin{align}
    \label{eq:hgfun-bj-to-Vj}
    V_j(a, P) = \varphi_0 \brac{
        a  \left[ \int_{\R^m} \frac{1}{2\pi i }  
            \int_C e^{-\lambda}
    b_j(\xi, \lambda) d\lambda d\xi \right] 
    } , \;\; \forall a \in C^\infty(\T^m_\theta).
\end{align}
In other words, the functional densities are given by
\begin{align}
    \label{eq:hgfun-bj-to-Rj}
    R^{(j)}_P =
\int_{\R^m} \frac{1}{2\pi i }  
            \int_C e^{-\lambda}
    b_j(\xi, \lambda) d\lambda d\xi .
\end{align}
\end{prop}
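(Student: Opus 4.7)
The plan is to reduce the identity \cref{eq:hgfun-bj-to-Vj} to the combination of three ingredients: the contour representation \cref{eq:heat-op-holo-cal-defn} of the heat semigroup, the resolvent symbol expansion obtained recursively from \cref{eq:b_j-general-form}, and a flat trace formula expressing $\Tr(a\,\Op(b))$ in terms of $\varphi_0(a\cdot\int_{\R^m} b(\xi)\,d\xi)$ for sufficiently regular symbols $b$. First I would write
\begin{align*}
\Tr\bigl(a\, e^{-tP}\bigr)=\frac{1}{2\pi i}\int_C e^{-t\lambda}\Tr\bigl(a(P-\lambda)^{-1}\bigr)\,d\lambda,
\end{align*}
which is legitimate because for $\lambda$ on the contour $C$ the operator $(P-\lambda)^{-1}$ is compact enough that $a(P-\lambda)^{-1}$ is trace class with integrable trace norm along $C$.

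Next I would use the parametric pseudo-differential symbol calculus of Baaj/Connes together with the Widom-type calculus reviewed in \S\ref{subsec:symbols-of-pseudo-ops} to replace $(P-\lambda)^{-1}$ by its formal symbol expansion $\sum_j b_j(\xi,\lambda)$, where the $b_j$'s are produced by \cref{eq:b_j-general-form}. The trace formula on $\T^m_\theta$ says that if $q(\xi)$ is a scalar symbol of sufficiently negative order, then
\begin{align*}
\Tr\bigl(a\,\Op(q)\bigr)=\varphi_0\!\left(a\cdot\int_{\R^m} q(\xi)\,d\xi\right),
\end{align*}
which follows because only the zeroth Fourier mode of a smooth function on $\T^m_\theta$ contributes to $\varphi_0$ and the operator trace picks out precisely this mode after Fourier inversion in $\xi$. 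Substituting the expansion into the contour integral and applying this trace formula termwise formally gives
\begin{align*}
\Tr\bigl(a\, e^{-tP}\bigr)\ \sim\ \sum_{j\ge 0}\varphi_0\!\left(a\cdot\int_{\R^m}\frac{1}{2\pi i}\int_C e^{-t\lambda}b_j(\xi,\lambda)\,d\lambda\,d\xi\right).
\end{align*}

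Now I would extract the $t$-dependence by homogeneity. Since $b_j(c\xi,c^2\lambda)=c^{-j-2}b_j(\xi,\lambda)$ for $c>0$, the change of variables $\xi\mapsto t^{-1/2}\xi$, $\lambda\mapsto t^{-1}\lambda$ produces a Jacobian $t^{-m/2-1}$ and a rescaled symbol $t^{(j+2)/2}b_j(\xi,\lambda)$ while turning $e^{-t\lambda}$ into $e^{-\lambda}$; the contour $C$ is invariant under this rescaling up to homotopy inside the resolvent sector. The exponent on $t$ therefore collapses to $(j-m)/2$, exactly as required by \cref{eq:small-time-heat-asym-functional}. Comparing the coefficient of $t^{(j-m)/2}$ on both sides identifies $V_j(a,P)$ with the claimed formula, and by varying $a$ over $C^\infty(\T^m_\theta)$ we read off $R_P^{(j)}$ as in \cref{eq:hgfun-bj-to-Rj}.

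The main obstacle, and the step I would spend most of the write-up on, is justifying that the termwise substitution of the formal symbol expansion into the contour integral actually yields a genuine asymptotic expansion with controlled remainders. Concretely, after truncating the expansion at some $N$, one needs a remainder estimate $\bigl\|\,a(P-\lambda)^{-1}-a\,\Op(\sum_{j<N}b_j)\bigr\|_{\mathrm{tr}}$ that decays fast enough in $\abs\lambda$ along $C$ and is of size $O(t^{(N-m)/2})$ after the contour integral; this is standard in the parametric calculus (a version is proved in \cite{MR3540454}) but requires the parametric ellipticity of $P-\lambda$ and the fact that $b_0(\xi,\lambda)=(p_2(\xi)-\lambda)^{-1}$ is everywhere invertible on $\Lambda$. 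Once the remainder bound is in hand, termwise integration and homogeneity finish the proof cleanly.
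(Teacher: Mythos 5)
Your proposal is, in substance, the paper's own approach: the paper does not write out a proof of this proposition at all but delegates it to its references (the Heisenberg-module case and the toric case, both following Gilkey's blueprint), and what you have written is precisely that blueprint --- the contour representation of $e^{-tP}$, substitution of the parametric symbol expansion from \cref{eq:b_j-general-form}, the flat trace formula on $\mathbb{T}^m_\theta$ reducing the operator trace to $\varphi_0$ of the $\xi$-integrated symbol, and the homogeneity rescaling $\xi\mapsto t^{-1/2}\xi$, $\lambda\mapsto t^{-1}\lambda$ that produces the exponent $(j-m)/2$ while normalizing $e^{-t\lambda}$ to $e^{-\lambda}$, which is exactly how the $\lambda$-independent coefficient in \cref{eq:hgfun-bj-to-Vj} arises. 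You also correctly locate the real analytic content in the trace-norm remainder estimate for the truncated parametrix.

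One technical slip is worth fixing. For $m\ge 2$ the operator $a(P-\lambda)^{-1}$ is \emph{not} trace class (by Weyl's law the singular values of $(P-\lambda)^{-1}$ decay only like $n^{-2/m}$), so the opening identity $\operatorname{Tr}\bigl(a\,e^{-tP}\bigr)=\frac{1}{2\pi i}\int_C e^{-t\lambda}\operatorname{Tr}\bigl(a(P-\lambda)^{-1}\bigr)\,d\lambda$ cannot be justified in the way you state it. The standard repair, used in the references the paper cites, is to integrate by parts in $\lambda$ so as to work with $(P-\lambda)^{-N}$ for $N>m/2$ (equivalently with $\partial_\lambda^{N-1}$ of the symbols), or to keep the whole manipulation at the symbol level and only take traces of the genuinely trace-class operators $a\,e^{-tP}$ and the truncated-parametrix remainder. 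This does not change the final formulas \cref{eq:hgfun-bj-to-Vj} and \cref{eq:hgfun-bj-to-Rj}, since the same integration by parts is an identity for the scalar contour integrals, but it is the one step in your write-up that would otherwise fail as stated.
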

\begin{proof}
    The result has been proved in a more general setting: on Heisenberg modules
    \cite[\S6]{MR3540454} and on toric noncommutative manifolds
    \cite[\S6]{Liu:2015aa}. They share the same blueprint from 
 \cite[\S1.7]{gilkey1995invariance}. 
 Moreover, the arguments in \cite{branson1986conformal} can also be adapted
 into the noncommutative setting which shows in addition that one can perform
 term by term differentiation in the heat trace asymptotic 
 \cref{eq:small-time-heat-asym-functional} when consider suitable variational
 problems of the heat trace. This property has been used in \cite{MR3194491} and
 \cite{LIU2017138}.
\end{proof}

The new input to the pseudo-differential approach 
is a rearrangement process for summands shown in 
\cref{eq:b_j-general-form}. Since the $\rho$'s has no dependence on $\lambda$,
we can factor them out by making use of the rearrangement operators discussed in 
\S\ref{subsec:smooth-funcal-A-tensor-n-times}:
\begin{align*}
  b_0^{\alpha_0} \rho_1 b_0^{\alpha_1} \rho_2
   \cdots b_0^{\alpha_{n-1}}
    \rho_n b_0^{\alpha_n}  
    =
    (b_0^{\alpha_0} \otimes  \cdots  \otimes  b_0^{\alpha_n}) \cdot  
    (\rho_1 \otimes  \cdots  \otimes  \rho_n).
\end{align*}
The function $G_\alpha(\bar s)$ in \cref{eq:Galpha-from-contour-int} is exactly
the scalar version of the operator valued integral
\begin{align*}
     \frac{1}{2\pi i} \int_C e^{-\lambda} 
(b_0^{\alpha_0} \otimes  \cdots  \otimes  b_0^{\alpha_n}) d\lambda.
\end{align*}

Now we assume that the leading symbol $p_2 = k \abs\xi^2$ where $k =e^h$,
$h=h^* \in C^\infty(\T^m_\theta)$ is  a Weyl factor. We shall perform
integration in $\xi$  using spherical coordinates 
$\xi = (s,r)$ and 
$d\xi = d \mathbf s (r^{m-1}dr)$,  where $s$ and 
$d \mathbf s$ are the coordinate and the measure on the unit sphere $S^{m-1}$
while   $r \ge  0$ is the radial  coordinate. 
Notice that $b_0$ is independent of $s$, thus spherical integration
concentrates on the $\rho$-factors of the form: 
$\rho_l (\xi) = \rho_l (r,s) = r^{ \gamma_l } \rho_l(s)$, where $\gamma_l$ is the
polynomial degree in $\xi$, $1\le l\le n$.  
After integrating over $S^{m-1}$ , it becomes  
\begin{align*}
    \int_{S^{m-1}} 
    \rho_1 (\xi) \otimes  \cdots  \otimes  \rho_n(\xi) d \mathbf s =
    r^d \bar \rho \in  C^\infty(\T^m_\theta)^{\otimes n}.
\end{align*}
Since $b_j$ (in \cref{eq:b_j-finite-sum})is of degree $-2-j$, 
the power $d = \sum \gamma_l $  subjects to 
\begin{align}
    \label{eq:d-and-alpha-eq-homog-of-bj}
    -2 \abs\alpha +d =-2-j, \, \, \, \,  \text{with} \, \, 
    \abs\alpha = \sum_{l=0}^n \alpha_l.
\end{align}
Notice the origin is no long a singularity of the contour integral 
after replacing the resolvent $(P-\lambda)^{-1}$  by $b_j(\xi,\lambda)$. 
In particular, we can
take the contour $C$ in \cref{eq:hgfun-bj-to-Rj,eq:hgfun-bj-to-Vj}
to be the imaginary axis oriented as $-i\infty$ to
$i\infty$. Set $\lambda = i x$ with $x \in  \R$ and 
denote 
$\bar \rho = \tilde \rho_1 \otimes  \cdots  \otimes  \tilde \rho_n \in
C^\infty(\T^m_\theta)^{\otimes n}$. 
Recall $p_2(\xi)
= k\abs\xi^2$ so that the resolvent reads 
\begin{align*}
    b_0(\xi,\lambda)= b_0(r,x) = (k r^2 - ix)^{-1}     .
\end{align*}
We further expand $R_P^{(j)}$ in  \cref{eq:hgfun-bj-to-Rj} according to 
$b_j$ given in \cref{eq:b_j-finite-sum}
\begin{align*}
    R_P^{(j)} & = \sum 
    \sbrac{
    \frac{1}{2\pi } \int_0^\infty \int_{-\infty}^\infty 
    e^{-i x} 
    (b_0^{\alpha_0} \otimes  \cdots  \otimes  b_0^{\alpha_n})
    r^{d+m-1}
    dx dr
    }
    (\bar \rho) \\
              &=
              \frac{1}{2} \sum
              \sbrac{
\frac{1}{2\pi } \int_0^\infty \int_{-\infty}^\infty 
    e^{-i x} 
    \prod_{l=1}^n (k^{(l)} r - ix)^{-\alpha_l}
    r^{( d+m-2 ) /2} 
    dx dr 
              }
              (\bar \rho)
    ,
\end{align*}
where $d = 2\abs\alpha -2-j$ satisfies \cref{eq:d-and-alpha-eq-homog-of-bj}
above and the overall
factor $1 /2$ comes from the substitution $r \to  r^2$.
The rearrangement operator in the squared brackets is exactly of the form given in 
\cref{thm:rarr-lem}.  Sum up, we have obtained the following general form of
the heat coefficients.
\begin{prop}
    \label{prop:R^j_P-general-form}
   Assume that the leading symbol of the elliptic operator $P:
   C^\infty(\T^m_\theta)\to  C^\infty(\T^m_\theta)$  is of the form $p_2
   = k\abs\xi^2$ with a Weyl factor $k=e^h$, $h=h^* \in  C^\infty(\T^m_\theta)$,
   the functional density of the $j$-the  heat trace asymptotic 
   coefficient consists of finite sums  of the form:
   \begin{align}
    \label{eq:R^j_P-general-form}
      R^{(j)}_P  = \frac{1}{2}
       \sum 
       k^{-( d(\alpha,m,j)+1)}
       \tilde H_\alpha(\mathbf z; d(\alpha,m,j)) (\bar \rho)
   \end{align}
   where $\bar \rho \in  C^\infty(\T^m_\theta)^{\otimes n}$, 
   $\alpha = (\alpha_0, \ldots, \alpha_n) \in  \Z_{+}^{n+1}$ with 
   $\abs\alpha = \sum_0^n \alpha_l$ and
   \begin{align}
       \label{eq:d(alpha-m-j)=}
       d(\alpha,m,j) = \abs\alpha + (m-j) /2 -2.
   \end{align}
    \end{prop}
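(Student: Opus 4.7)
The plan is to follow the derivation already sketched in \S\ref{subsec:application-of-rearr-lem} but to assemble it into a single proof: start from the formula $R^{(j)}_P = \int_{\R^m} (2\pi i)^{-1}\int_C e^{-\lambda} b_j(\xi,\lambda)\,d\lambda\,d\xi$ of \eqref{eq:hgfun-bj-to-Rj}, substitute the finite-sum expansion \eqref{eq:b_j-finite-sum} of the resolvent approximation, and reduce each summand to an instance of the rearrangement integral handled by \cref{thm:rarr-lem}.

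Concretely, I would rewrite each summand $b_0^{\alpha_0}\rho_1 b_0^{\alpha_1}\cdots\rho_n b_0^{\alpha_n}$ of $b_j$ in tensor form as $(b_0^{\alpha_0}\otimes\cdots\otimes b_0^{\alpha_n})\cdot(\rho_1\otimes\cdots\otimes\rho_n)$, using the contraction map $\iota$ from \eqref{eq:intro-defn-contraction}. Since the $\rho_l$ are $\lambda$-independent (they are partial derivatives of the symbols $p_\nu$), they pull outside the $\lambda,\xi$-integrals. Next, pass to spherical coordinates $\xi = rs$ with $s\in S^{m-1}$, so $d\xi = r^{m-1}dr\,d\mathbf s$. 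Because $p_2(\xi)=k\abs\xi^2$, each $b_0(\xi,\lambda)=(kr^2-\lambda)^{-1}$ is independent of the angular variable, while each $\rho_l$ is homogeneous in $\xi$ of some degree $\gamma_l$, so $\rho_l(\xi)=r^{\gamma_l}\rho_l(s)$. The angular integration therefore separates and produces a single tensor $\bar\rho = \int_{S^{m-1}}\rho_1(s)\otimes\cdots\otimes\rho_n(s)\,d\mathbf s \in C^\infty(\T^m_\theta)^{\otimes n}$.

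What remains is a radial integral coupled to the contour integral. The overall homogeneity of $b_j$ gives $-2\abs\alpha+\sum_l\gamma_l = -2-j$, hence $\sum_l\gamma_l + m-1 = 2\abs\alpha+m-j-3$. Performing the substitution $r\mapsto r^2$ (which contributes the overall prefactor $1/2$) and deforming the contour $C$ to the imaginary axis (legitimate at the symbolic level because each summand is rational in $\lambda$ with poles only on the positive real axis), I obtain precisely the double integral appearing on the left-hand side of \eqref{eq:contourint-to-tild-H-alpha} with exponent $d = \abs\alpha + (m-j)/2 - 2 = d(\alpha,m,j)$. \Cref{thm:rarr-lem} then converts this into $k^{-(d+1)}\tilde H_\alpha(\mathbf{\bar z};d)(\bar\rho)$, and summing over all summands of $b_j$ yields the claimed form \eqref{eq:R^j_P-general-form}.

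The main bookkeeping obstacle will be tracking all the degree contributions so that the exponent of $\tilde H_\alpha$ really matches $d(\alpha,m,j)$: one has to combine the $r^{m-1}$ from the spherical measure, the $\sum\gamma_l$ contribution from the homogeneities of the $\rho_l$, and the Jacobian of $r\mapsto r^2$, and then check against the homogeneity identity for $b_j$. A secondary subtlety is justifying the contour deformation and the interchange of the $d\xi$, $d\lambda$ integrals; at the symbol level this reduces to the Fubini-type statement already invoked in the proof of \cref{thm:rarr-lem}, so no new analytic input is needed.
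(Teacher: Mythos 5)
Your proposal is correct and follows essentially the same route as the paper, which derives this proposition by exactly the steps you describe: tensorizing each summand of $b_j$, separating the angular integral in spherical coordinates, using the homogeneity identity $-2\abs\alpha+\sum_l\gamma_l=-2-j$ together with the substitution $r\to r^2$ to land on the exponent $d(\alpha,m,j)$, and then invoking \cref{thm:rarr-lem}. Your degree bookkeeping checks out, and the contour/Fubini justifications you defer to are the same ones the paper handles inside the proof of the rearrangement lemma.
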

\begin{rem}
    \hspace{1cm}
    \begin{enumerate}
        \item We have  suppressed the notation for functional calculus, that is 
            $\tilde H_\alpha$ means $( \tilde H_\alpha)_\gamma$ (cf.
            \cref{eq:f-gamma=Psi-gamma(f)}).
            We will keep the abbreviation in the rest of the paper and
            use bold font letters $\mathbf x$, $\mathbf y$ and $\mathbf
            z$ (cf. \cref{eq:mathbf-z^(n)-defn}) to emphasize the operator
            natural of the variables.
        \item The length $n$ can vary in the finite sum.    
        \item The overall factor $1/ 2$  comes from the change of
            variable $r \to  r^2$. 
    \end{enumerate}
\end{rem}

We conclude  the section with a comparison to  the argument given 
in \cite[\S6]{MR3194491} for handling  the integral in \cref{eq:hgfun-bj-to-Rj}.
 It started with
switching the order of integration between $d\lambda$ and $dr$, which 
requires the integrability of $r^m b_j(r,\lambda)$ in $r$.  
Due to the specific homogeneity
of $b_2$,  the result of contour integral is simply setting
$\lambda =-1$. 
Note that differentiating in $\lambda$ reduces the homogeneity of $b_j$ by $2$.
When $r^m b_j(r,\lambda)$ is not integrable in $r$, 
one can use integration by parts to replace it by 
$r^m \partial_\lambda^\nu b_j(r,\lambda)$, whose integral in $r$  converges
when $\nu$ is large.  
The precise statement (cf. \cite[Lemma 6.1]{LIU2017138}) reads
\begin{align}
\int_0^\infty
\label{eq:set-lamda=-1}
    \frac{1}{2\pi i }  
            \int_C e^{-\lambda}
            b_j(r, \lambda) r^{m-1} d\lambda dr =
\int_0^\infty
\sbrac{
    \partial_{\lambda}^{\nu_0}\big |_{\lambda=-1} 
b_j(r,\lambda)} r^{m-1} dr,
\end{align}
where $\nu_0 =  (m-j) /2$ provided $b_j$ is of homogeneity $-j-2$. Namely,
$\nu_0$ is the smallest integer such that the homogeneity of
$\partial_{\lambda}^{\nu_0} b_j$  is strictly less than $-m$.
Notice that $b_j$ already contains large
number of summands as in \cref{eq:b_j-general-form}.
The differential $\partial_\lambda^{\nu_0}$ will, in principle, double,
triple, $\ldots$, the number as $m$ going up. In special cases, we have concise
formulas  like those \cref{prop:tildeKab-tildeHabc}, but the dependence on $m$ 
does not provides  satisfactory solutions 
regarding to   applications in  modular geometry, such as 
\cref{thm:-funrel-all-m}.

So far, we have finished the general discussion of the pseudo-differential
approach to the heat coefficients and would like to provide a complete
calculation of a $V_2$-coefficient in the next section.

\subsection{The $V_2$-term of $\Delta_k = k \Delta$ }
Let $k =e^h$ with self-adjoint $h \in  C^\infty(\T^m_\theta)$ be a Weyl factor
as usual. Denote by $\mathbf y$ and $\mathbf x$ the corresponding modular
operator and modular derivation. 
In the section, we shall carry out explicit computation of the $V_2$-term of
the perturbed  Laplacian $\Delta_\varphi$ which represents 
curved metric inside the conformal class of the flat one
(cf. \S\ref{subsec:metric-structure}).

To simplified the computation, it is better to  work with
\begin{align}
    \label{eq:Del-k-defn}
    \Delta_k = k \Delta = k^{\frac{1}{2} } \Delta_\varphi k^{-\frac{1}{2}}  
: C^\infty(\T^m_\theta) \to C^\infty(\T^m_\theta) 
\end{align}
whose symbol decomposition is given by
 $\sigma(\Delta_k) = \sigma(k \Delta) = p_2 + p_1 +p_0$ with
\begin{align*}
    p_2 = k \abs\xi^2, \,\,\, p_1 =p_0 =0.
\end{align*}
Their heat trace  functionals are almost identical: $\forall  a \in
C^\infty(\T^m_\theta)$: 
\begin{align*}
    \Tr \brac{a e^{-t\Delta_\varphi}} = 
    \Tr \sbrac{
    \mathbf y^{-\frac{1}{2}}
    \brac{a e^{-t\Delta_\varphi}}  
    } =
    \Tr \brac{ \mathbf y^{-\frac{1}{2}}(a) e^{-t\Delta_k}} . 
\end{align*}
It follows that 
\begin{align*}
    R^{(j)}_{\Delta_\varphi} = \mathbf y^{\frac{1}{2}}
    \brac{ R^{(j)}_{\Delta_k}}, \, \, 
    j=0,1,2\ldots.
\end{align*}
The merit of computing $R^{(j)}_{\Delta_k}$ first  
is clear:
the vanishing of $p_1$ and $p_0$ provides a considerable cut
to the number of summands of the final expansion of $b_2$.

Recall that the vertical differential $D = \partial_\xi$ are partial
derivatives in $\xi \in  \R^m$  while the horizontal differential 
$\nabla  = -i\delta$ are proportional to the basic derivations in
\cref{eq:basic-derivation-del-s}. 
As preparation for expanding the bi-differential operators $a_j(\cdot, \cdot)$ 
in \cref{eq:b2cal-a_j-flatconnection}, we compute: 
\begin{align}
    \begin{split} 
    (D p_2)_j = 2 k \xi_j, \,\, & (D^2 p_2)_{jl} = 2 k \mathbf 1_{j l}, 
    \\
    (\nabla p_2)_j
    = r^2 (\nabla k)_j  , \,\,\,& (\nabla^2 p_2)_{jl} = r^2 (\nabla^2
    k)_{jl} ,
    \end{split}
    \label{eq:b2cal-vetandhordiffofp2}
\end{align}
where $r = \abs\xi$ and  
$\mathbf 1_{jl}$ stands for the $(j,l)$-entry of the identity matrix (of
size $m \times  m$).

In \cref{eq:bjsum-star-pmu=1}, we put the polynomial symbols $p_\nu$
($\nu=0,1,2$) on the right on purpose, so that $b_j$ only appears in the first
slots of the bi-differential operators $a_j(\cdot ,\cdot )$. 
As a result, we have successfully avoid the occurrence of $\nabla^\alpha b_0$   
in the general form of $b_j$ in \eqref{eq:b_j-general-form},
The reason is 
the commutativity:
\begin{align*}
    [p_2, (D p_2)_j] =0, \, \, \, \, 
    [p_2, (\nabla  p_2)_j] \neq 0.
\end{align*}
As a consequence, components of $D^l b_0$  behave normally: 
\begin{align*}
    (D b_0)_j &= D_j (p_2 - \lambda)^{-1} = - b_0^{2} (D_j p_2)\\
    (D^2 b_0)_{jl} &= 2b_0^3 (D p_2)_j (D p_2)_l -b_0^2 (D^2 p_2)_{jl},
\end{align*}
For $\nabla  b_0$, only the Leibniz property survives
\begin{align*}
    0= \nabla_j( p_2 b_0) = p_2 (\nabla_j b_0) + (\nabla_j p_2) b_0,
\end{align*}
thus $(\nabla  b_0)_j = - b_0 (\nabla  p_2)_j b_0$.
One   repeats the Leibniz rule to 
reach higher order derivatives $\nabla^\alpha b_0$.  
The number of summands is much larger compared to the
vertical differential $D^\alpha b_0$.



By carefully expanding the general formulas of $b_1$ and $b_2$ given in
\cref{eq:b2cal-b1term-gen,eq:b2cal-b2term-gen}, we obtain:
\begin{align}
\begin{split}
    b_2(\xi,\lambda) = & \,\,  4 r^2 \xi _j \xi _l b_0^3.k^2. (\nabla^2 k)_{l,j}.b_0-r^2. \mathbf 1_{j l}. b_0^2.k. (\nabla^2 k)_{l,j}.b_0 \\
 & \,\,  + 4 r^2 \xi _j \xi _l b_0^2.k. (\nabla k)_l.b_0. (\nabla k)_j.b_0 
 -4 r^4 \xi _j \xi _l b_0^2.k. (\nabla k)_l.b_0^2.k. (\nabla k)_j.b_0,
\\
&\,\,
+ 2 r^4 .\mathbf 1_{j l} .b_0^2.k.(\nabla k)_l.b_0.\nabla (k)_j.b_0-8 r^4 \xi _j \xi _l b_0^3.k^2.(\nabla k)_l.b_0.(\nabla k)_j.b_0,
\end{split}
\label{eq:b2cal-b2termwithxi}
 \end{align}
 where the summation is taken over repeated indices from $1$ to $m$.

 \begin{lem}
     Let $\xi =(\xi_1, \ldots, \xi_m)$ and 
     $\xi = (r,s) \in (0,\infty) \times S^{m-1}$ be the Cartesian and the
     spherical coordinates of $\R^m$ respectively. 
     We recall the volume of the unit sphere $S^{m-1}$ 
     \begin{align}
         \label{eq:volume-Sm-1}
         \mathrm{Vol}(S^{m-1}) &=
         \int_{S^{m-1}}    d\mathbf{s} = \frac{2 \pi^{m/2}}{\Gamma(m/2)}.
     \end{align}
     For monomials, the integration is given by: 
     \begin{align}
         \label{eq:int-Sm-1-xi-(jl)}
         \int_{S^{m-1}} \xi_j \xi_l (r,s)   d\mathbf{s} &=
         \frac{\pi^{m/2}}{\Gamma(1+m/2)} \mathbf{1}_{jl} r^2
         = \mathrm{Vol}(S^{m-1}) \frac1m \mathbf 1_{jl} r^2.
     \end{align}
 \end{lem}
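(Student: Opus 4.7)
The plan is to prove both identities by the standard Gaussian integral trick combined with a symmetry argument, since these are classical facts whose proofs fit naturally into the pseudo-differential framework already in place.

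First I would compute $\mathrm{Vol}(S^{m-1})$ by evaluating the Gaussian integral
\[
\int_{\R^m} e^{-|\xi|^2}\, d\xi = \pi^{m/2}
\]
in two ways: directly as a product of one-dimensional integrals, and via the spherical decomposition $d\xi = d\mathbf{s}\,(r^{m-1} dr)$. The radial factor is $\int_0^\infty e^{-r^2} r^{m-1}\, dr = \tfrac{1}{2}\Gamma(m/2)$ by the change of variable $t = r^2$, and equating the two expressions yields $\mathrm{Vol}(S^{m-1}) = 2\pi^{m/2}/\Gamma(m/2)$.

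For the monomial integral, I would factor out the radial dependence: writing $\xi = rs$ with $s \in S^{m-1}$ gives $\xi_j \xi_l = r^2 s_j s_l$, so it suffices to evaluate $I_{jl} := \int_{S^{m-1}} s_j s_l\, d\mathbf{s}$. The key observation is that the orthogonal group $O(m)$ acts transitively on $S^{m-1}$ and preserves $d\mathbf{s}$, so the matrix $(I_{jl})$ is $O(m)$-invariant; equivalently it commutes with every orthogonal matrix, which forces $I_{jl} = c\,\mathbf{1}_{jl}$ for some constant $c$. Taking the trace of both sides and using $\sum_j s_j^2 = 1$ on $S^{m-1}$ gives
\[
m\, c = \sum_{j=1}^{m} I_{jj} = \int_{S^{m-1}} |s|^2\, d\mathbf{s} = \mathrm{Vol}(S^{m-1}),
\]
so $c = \mathrm{Vol}(S^{m-1})/m$. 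Combining with the identity $\Gamma(1+m/2) = (m/2)\Gamma(m/2)$ produces the stated form $\pi^{m/2}/\Gamma(1+m/2)$.

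There is no real obstacle here; the only care needed is to write the two different-looking constants ($\pi^{m/2}/\Gamma(1+m/2)$ versus $\mathrm{Vol}(S^{m-1})/m$) in a way that makes their equality transparent, which the identity $\Gamma(1+m/2) = (m/2)\Gamma(m/2)$ handles immediately. The whole argument is a few lines and is a direct symmetry-plus-Gaussian computation rather than anything requiring the machinery of the rest of the paper.
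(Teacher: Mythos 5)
Your proposal is correct and the core argument for \eqref{eq:int-Sm-1-xi-(jl)} is essentially the paper's own: symmetry of the sphere forces the matrix $\bigl(\int_{S^{m-1}}\xi_j\xi_l\,d\mathbf{s}\bigr)$ to be a multiple of the identity, and taking the trace with $\sum_l \xi_l^2 = r^2$ gives the factor $\tfrac1m\,\mathrm{Vol}(S^{m-1})$, exactly as in the paper (your $O(m)$-invariance phrasing versus the paper's explicit cancellation for $j\neq l$ plus equality of diagonal entries is a cosmetic difference). The only addition is your Gaussian-integral derivation of $\mathrm{Vol}(S^{m-1})$, which the paper merely recalls.
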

 \begin{proof}
     The integration of the monomials  \cref{eq:int-Sm-1-xi-(jl)} follows from 
     the symmetry of the sphere,  we have: 
    \begin{enumerate}
        \item for $i \neq j$, 
    complete cancellation occurs so that the result is zero.
\item for $i=j$,  
    \begin{align*}
        \int_{S^{m-1}} \xi_j^2 (r,s)   d\mathbf{s} =
   \frac{1}{m}     \int_{S^{m-1}} \sum_{l=1}^m \xi_l^2 (r,s)   d\mathbf{s} =
   \frac{1}{m}     \int_{S^{m-1}}  r^2  d\mathbf{s} 
   = \frac{r^2}{m} \mathrm{Vol}(S^{m-1}).
    \end{align*}
    \end{enumerate}
 \end{proof}
 After applying the lemma above to 
 \cref{eq:b2cal-b2termwithxi}, we see that,
 upto an overall factor $\mathrm{Vol}(S^{m-1})$, $\tilde b_2(r,\lambda)$,  the
 integration of $  b_2(\xi,\lambda)$ over the unit sphere is given by
 \begin{align}
 \begin{split}
 &\,\,
  (\mathrm{Vol}(S^{m-1}))^{-1} \tilde b_2(r,\lambda)  
 \defeq 
 (\mathrm{Vol}(S^{m-1}))^{-1}
 \int_{S^{m-1}} b_2 (\xi,\lambda) d \mathbf s
     \\ 
 =&\, \, 
 -r^2 \mathbf 1_{j l}. b_0^2.k.(\nabla^2 k)_{l,j}.b_0 + \frac{4 r^4 \mathbf 1_{j
 l} b_0^3.k^2.(\nabla^2 k)_{l,j}.b_0}{m} \\
 &\,\,
 +2 r^4 \mathbf 1_{j l}. b_0^2.k.(\nabla k)_l.b_0.(\nabla k)_j.b_0 +
 \frac{4 r^4 \mathbf 1_{j l}. b_0^2.k.(\nabla k)_l.b_0.(\nabla k)_j.b_0}{m} \\
 &\,\, 
 -\frac{8 r^6 \mathbf 1_{j l}. b_0^3.k^2.(\nabla k)_l.b_0.(\nabla k)_j.b_0}{m}
 -\frac{4 r^6 \mathbf 1_{j l}. b_0^2.k.(\nabla k)_l.b_0^2.k.(\nabla
 k)_j.b_0}{m}   , 
 \end{split}
\label{eq:b2cal-b_2inr}
 \end{align}
 After summing over repeated indices, we see that the 
 $\tilde \rho$ in \cref{eq:R^j_P-general-form}
  will be one of the following  forms:
    \begin{align}
        \label{eq:Tr-nabla^2k-nablak-otimes-nablak}
        \Tr(\nabla^2 k) = \sum_{l=1}^m \nabla^2_l k = -\Delta k,
        \, \, 
        \Tr(\nabla k \otimes  \nabla  k) 
        = \sum_{l=1}^m (\nabla_l k) \otimes  (\nabla_l k) .
    \end{align}
\begin{thm}
    \label{thm:psecal-R_Delta_k}
    Denote by $R_{\Delta_k} \defeq R^{(2)}_{\Delta_k} \in C^\infty(\T^m_\theta)$ the
    functional density of the second heat coefficient of $\Delta_k = k \Delta$.  
    Upto an overall constant $\op{Vol}(S^{m-1})/2$, 
    \begin{align}
    &\, \,  (\op{Vol}(S^{m-1})/2)^{-1}
        R_{\Delta_k} 
        \nonumber \\
        =&\, \, 
        k^{-\frac{m}{2}} 
        K_{\Delta_k} (\mathbf y ;m) \brac {\Tr(\nabla^2 k)}
        + k^{-\frac{m}{2}-1}
        H_{\Delta_k}(\mathbf y^{(1)}, \mathbf y^{(2)} ;m) 
        \brac{
        \Tr(\nabla k \otimes  \nabla k) 
        }
    \label{eq:psecal-R_Delta_k}
    \end{align}
    where the rearrangement operators $K_{\Delta_k} (\mathbf y ;m)$  
    and $H_{\Delta_k}(\mathbf y^{(1)}, \mathbf y^{(2)} ;m)$ are introduced in 
    $\S\ref{subsec:smooth-funcal-A-tensor-n-times}$ with respect to the underlying
    spectral functions
\begin{align}
     K_{\Delta_k} (y;m) = \frac4m H_{3,1}(z;m) - H_{2,1}(z;m),
     \label{eq:b2cal-KDeltak}
 \end{align}
 with $z=1-y$ and 
 \begin{align}
\label{eq:b2cal-HDeltak}
\begin{split}
      H_{\Delta_k} (y_1 , y_2 ;m)  &=
      (\frac{4 }{m}+2) H_{2,1,1}\left(z_1,z_2;m\right)  -\frac{4 (1-z_1)
     H_{2,2,1}\left(z_1,z_2 ;m\right)}{m}
     \\ &  
     -\frac{8
     H_{3,1,1}\left(z_1,z_2;m\right)}{m},
\end{split}
    \end{align}
    where $z_1 = 1- y_1$ and $z_2 = 1-y_1 y_2$.
\end{thm}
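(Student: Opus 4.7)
The plan is to follow the algorithm laid out in Proposition \ref{prop:R^j_P-general-form}, specializing to $\Delta_k = k\Delta$ in three stages. In the first stage, I start from the general formulas \eqref{eq:b2cal-b1term-gen}, \eqref{eq:b2cal-b2term-gen}: because $p_1 = p_0 = 0$ and $p_2 = k|\xi|^2$, only $a_1(b_1,p_2)$ and $a_2(b_0,p_2)$ (composed with $-b_0$) survive in $b_2$, and $b_1 = -a_1(b_0,p_2)\,b_0$. Expanding the bi-differentials using \eqref{eq:b2cal-a_j-flatconnection} together with the vertical/horizontal derivatives of $p_2$ recorded in \eqref{eq:b2cal-vetandhordiffofp2}, I obtain the six-term formula \eqref{eq:b2cal-b2termwithxi}. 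The key book-keeping device is that $[k,b_0]=0$ (since $b_0 = (kr^2-ix)^{-1}$ is a function of $h$), so stray $k$-factors can be freely commuted past any $b_0$ but not past any $(\nabla k)_l$.

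In the second stage, I perform the $S^{m-1}$-integration using $\int_{S^{m-1}}\xi_j\xi_l\,d\mathbf s = \tfrac{\mathrm{Vol}(S^{m-1})}{m}\mathbf{1}_{jl}\,r^2$. Each monomial $\xi_j\xi_l$ is replaced by $\tfrac1m\mathbf 1_{jl}r^2$ (up to the volume factor) and the resulting summation $\sum_{j=l}^m$ produces one of the two tensors in \eqref{eq:Tr-nabla^2k-nablak-otimes-nablak}; terms carrying $(\nabla^2 k)_{lj}$ yield $\mathrm{Tr}(\nabla^2 k)$ while those carrying $(\nabla k)_l \otimes (\nabla k)_j$ yield $\mathrm{Tr}(\nabla k \otimes \nabla k)$. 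This gives the explicit expression \eqref{eq:b2cal-b_2inr}.

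In the third stage, each of the six summands has the shape $c\cdot k^a\cdot r^{2\gamma}(b_0^{\alpha_0}\otimes\cdots\otimes b_0^{\alpha_n})\cdot\bar\rho$ after commuting central $k$'s to the left. The substitution $r\mapsto\sqrt r$ gives $r^{m-1+2\gamma}dr \to \tfrac12 r^{(m+2\gamma-2)/2}dr$; the contour $C$ in \eqref{eq:hgfun-bj-to-Rj} is taken along the imaginary axis; the Rearrangement Lemma (\cref{thm:rarr-lem}) then collapses the $dx\,dr$ integral to $k^{-(d+1)}\tilde H_\alpha(\mathbf{\bar z};d)$ with $d=(m+2\gamma-2)/2$. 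A direct check shows $d$ always equals $d(\alpha,m,2)=|\alpha|+m/2-3$, so $\tilde H_\alpha(\mathbf{\bar z};d)=H_\alpha(\mathbf{\bar z};m)$ by \eqref{eq:H-alpha-defn}. Grouping by tensor type: terms~1 and~2 feed $\mathrm{Tr}(\nabla^2 k)$ with multi-indices $(2,1)$ and $(3,1)$, yielding the two summands of $K_{\Delta_k}$ in \eqref{eq:b2cal-KDeltak}; terms~3, 4, 5 feed $\mathrm{Tr}(\nabla k\otimes\nabla k)$ with multi-indices $(2,1,1)$, $(3,1,1)$, $(2,2,1)$, giving the three summands of $H_{\Delta_k}$ in \eqref{eq:b2cal-HDeltak}. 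The one spot that requires care, and is the main obstacle, is term~5, which contains an \emph{internal} $k$-factor sitting between $b_0^2$ and $(\nabla k)_l$ that cannot be commuted past $(\nabla k)_l$. Absorbing it into the middle tensor slot gives $k^{(1)}=k^{(0)}\mathbf y^{(1)}=k(1-\mathbf z^{(1)})$; after cancellation against $k^{-(d+1)}$ this produces precisely the coefficient $-\tfrac{4(1-z_1)}{m}H_{2,2,1}(z_1,z_2;m)$ in \eqref{eq:b2cal-HDeltak}, while the remaining numerical factors are obtained by combining the prefactor $-4/m$, the $1/2$ from the radial substitution, and the normalization by $\mathrm{Vol}(S^{m-1})/2$.
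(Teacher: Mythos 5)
Your proposal is correct and follows essentially the same route as the paper: specialize $b_1,b_2$ to $p_1=p_0=0$, expand the bi-differentials to get the six-term formula \eqref{eq:b2cal-b2termwithxi}, integrate over $S^{m-1}$ to reach \eqref{eq:b2cal-b_2inr}, and apply the Rearrangement Lemma termwise with the exponent check $d=d(\alpha,m,2)$, singling out the $(2,2,1)$ term whose internal $k$ yields $\mathbf y^{(1)}=1-\mathbf z^{(1)}$ and hence the $(1-z_1)$ factor --- which is precisely the one example the paper's proof works out. The only quibbles are cosmetic (your renumbering of the $H$-terms after merging the two $(2,1,1)$ contributions, and the stray ``$\sum_{j=l}^m$'' notation).
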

\begin{proof}
   We have shown that
   \begin{align*}
       R_{\Delta_k} 
       = (2\pi)^{-1} \int_0^\infty \int_{-\infty}^\infty
    \tilde   b_2(r,ix) dx (r^{m-1} dr) 
   \end{align*}
   and explained in \S\ref{subsec:application-of-rearr-lem} that how to apply
 \cref{thm:rarr-lem} to compute the integration, cf.
 \cref{prop:R^j_P-general-form}.
Let us carry out one example, 
the one  with $H_{2,2,1}$  which has an extra 
  factor $(1-z_1)$ shown in \cref{eq:b2cal-HDeltak}:
\begin{align*}
     & \,\, 
     (2\pi)^{-1} \int_0^\infty  \int_{-\infty}^\infty
     e^{-ix}
\mathbf 1_{j l}. b_0^2.k.(\nabla k)_l.b_0^2.(\nabla k)_j.b_0
dx (r^{m-1} dr)  
\\ = & \,\,
k^{-(m/2+3)}
H_{2,2,1}(\mathbf z^{(1)}, \mathbf z^{(2)} ;m)
\brac{
   k (\nabla_l k) \otimes k (\nabla_l k) 
}\\
= &\, \, 
k^{-(m/2+1)}
\mathbf y^{(1)} H_{2,2,1}(\mathbf z^{(1)}, \mathbf z^{(2)} ;m)
\brac{
    (\nabla_l k) \otimes  (\nabla_l k) 
},
 \end{align*}
 where $\mathbf y^{(1)} = 1 - \mathbf z^{(1)}$
 is responsible for moving the $k$ in the second factor
 of the tensor to the very left.
Instead of repeating the process, we just point out that 
   the function $K_{\Delta_k}$ collects the contribution from the first two
   terms in \cref{eq:b2cal-b_2inr} while  $H_{\Delta_k}$ is determined by the last
   four terms.
\end{proof}

\section{Functional Relations}
\label{sec:FunRel}


In regard to the modular geometry described in
\S\ref{subsec:metric-structure} and \ref{subsec:heat-trace-asym-local-invariants},
geometric consequences are reflected as internal relations between the spectral
functions $K_{\Delta_k}$ and $H_{\Delta_k}$. 
In the following two examples, we provide new simplifications by making use of the
differential  and recursive relations of the hypergeometric family
$H_\alpha(\bar z;m)$.

 \subsection{Gauss-Bonnet Theorem for $\T^2_\theta$}
 \label{subsec:GB-thm}
 Let us fix $m=2$ in this section \S\ref{subsec:GB-thm}.
 Recall from \cite{MR3194491,MR2907006} that,
 with regard to the model of conformal change of metric: $\Delta \to
 \Delta_\varphi = k^{\frac{1}{2}} \Delta k^{\frac{1}{2}}$, 
 with Weyl factor $k = e^h$, $h =h^* \in  C^\infty(\T^m_\theta)$. The functional 
 \begin{align*}
     h \to  F(h) = F(k) \defeq \zeta_{\Delta_\varphi}(0)
 \end{align*}
recaptures the integration of the Gaussian curvature on Riemann surfaces.
Therefore the Gauss-Bonnet theorem can be rephrased in terms of the spectral
zeta functions: $\zeta_{\Delta_\varphi}(0) = \zeta_{\Delta}(0)$, that is, the
value of the spectral zeta function at zero is independent of  the metrics
(parametrized by the Weyl factors $k$) and agrees with the commutative 
counterpart (because the flat Laplacian $\Delta$ is isospectral to the one on
flat two torus).

 It has been shown that functional above, upto a constant, is equal to the
 integration of the modular curvature:
 $F(k) = \varphi_0(R_{\Delta_\varphi}) =
 \varphi_0( R_{\Delta_k})$, 
 and admits a local expression 
\begin{align}
    \label{eq:funrel-Gauss-Bonnet-localform}
    F(k) = 
    \sum_{l=1,2}
    \varphi_0\brac{
        k^{-2} T(\mathbf y)(\nabla_l k) (\nabla_l k) 
    } ,
\end{align}
where $T(\mathbf y):C^\infty(\T^m_\theta) \to  C^\infty(\T^m_\theta)$ is
a rearrangement operator via the function 
\footnote{
    The function $T(y)$ agrees with (upto some constant) the $f(u)$ in
    \cite[Lemma 3.2]{MR2907006}. 
}
\begin{align}
    T(y) = - K_{\Delta_k}(1) \frac{y^{-2} -1}{y-1} + H_{\Delta_k}(y, y^{-1}).
    \label{eq:funrel-T}
\end{align}
Notice that with respect to the local form \cref{eq:funrel-Gauss-Bonnet-localform},
the Gauss-Bonnet theorem becomes that $F(k)$ is the zero functional in $k$ becase
 $F(1) = 0$.

Let $\tilde T(y) = y^{-2} T(y^{-1})$. A sufficient condition for $F(k) =0$
is the following relation:
\begin{align}
    T(y)+ \tilde T(y) = 0
    .
    \label{eq:funrel-T+tildeT}
\end{align}
Indeed, due to the trace property of $\varphi_0$, we compute:
\begin{align*}
    F(k) =  \varphi_0\brac{
        k^{-2} T(\mathbf y)(\nabla_l k) (\nabla_l k) 
    } &= \varphi_0\brac{
        (\nabla_l k) T(\mathbf y^{-1})((\nabla_l k)k^{-2}) 
    } \\
 &=  \varphi_0\brac{
     k^{-2}  \mathbf y^{-2}T(\mathbf y^{-1})(\nabla_l k) (\nabla_l k) 
 },  
\end{align*} 
the \cref{eq:funrel-T+tildeT} implies $2 F(k) =0$.

The main result  of this section \cref{prop:app-T-functional-EQ}
is  a combinatorial  verification of 
\cref{eq:funrel-T+tildeT} using the Gauss hypergeometric functions  as
building blocks (in particular, making no use of their explicit expressions).  
First of all, we rewrite \cref{eq:b2cal-KDeltak,eq:b2cal-HDeltak} in terms of
hypergeometric functions: 
\begin{align}
    \begin{split}
        K_{\Delta_k}( y ;m) = & \,\,
    -\frac{1}{2} \Gamma \left(  m/2 +1\right)
     \pFq21 \left(  m/2 +1,1,3,z \right) 
     \\ &\,\,
     + \frac{2 \Gamma \left(m/2  +2\right)}{3m}  \pFq21\left(m/2  +2,1,4,z
     \right),
     \end{split}
\label{eq:b2cal-KDelta-hgfun}
    \end{align}
  with $z=1-y$   and
\begin{align}
    \begin{split}
     H_{\Delta_k}( y_1 ,y_2 ;m) = 
& \,\, \frac{1}{6m}    
 2 (m+2) \Gamma \left(m/2+2\right) F_1\left(m/2+2;1,1;4; z_1 , z_2 \right)
 \\ & \,\,
-\frac{1}{6m}\Gamma \left(m/2+3\right)  2 F_1\left(m/2+3;1,1;5;z_1,z_2\right)\\
& \,\,
-\frac{1}{6m}\Gamma \left(m/2+3\right) (1-z_1) F_1\left(m/2+3;2,1;5;z_1,z_2\right).
     \end{split}
\label{eq:b2cal-HDelta-hgfun}
   \end{align}
where $z_1 = 1-y_1$ and $z_2 = 1-y_1 y_2$.
By setting $m=2$, we get 
\begin{lem}
    \label{lem:T-as-hgfun-step-1}
    The function $T(y)$ defined in \cref{eq:funrel-T} is equal to 
    \begin{align}
        \label{eq:app-T}
        T(y) 
        &= 1/6 \left[  1/y + 8 \pFq21(3, 1; 4; 1 - y) - 6 \pFq21( 4,1;5;1
            - y ) 
        \right. \\ & \left. - 
        3 y \pFq21( 4, 2; 5; 1 - y ) \right]
        \nonumber
        .
    \end{align}
\end{lem}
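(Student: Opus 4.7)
The plan is to substitute $m=2$ into the hypergeometric expressions \eqref{eq:b2cal-KDelta-hgfun} and \eqref{eq:b2cal-HDelta-hgfun} for $K_{\Delta_k}$ and $H_{\Delta_k}$, and then evaluate the defining formula \eqref{eq:funrel-T} of $T(y)$. No new identities are needed beyond the specialization formulas recalled in \S\ref{subsec:gauss-appell-hgfuns}; the work is purely bookkeeping in the Gamma factors and the specializations of Appell $F_1$ to Gauss $\pFq21$.

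First I would compute the scalar $K_{\Delta_k}(1;2)$. At $y=1$ we have $z=0$, and $\pFq21(a,b;c;0)=1$, so both hypergeometric values collapse to $1$. With $m=2$ the Gamma factors are $\Gamma(m/2+1)=1$ and $\Gamma(m/2+2)=2$, giving $K_{\Delta_k}(1;2)=-\tfrac12+\tfrac23=\tfrac16$. The scalar prefactor $-K_{\Delta_k}(1)\cdot(y^{-2}-1)/(y-1)$ then simplifies by elementary algebra using $y^2-1=(y-1)(y+1)$, yielding a rational function of $y$ that contributes the $1/y$ term in the claimed formula.

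The heart of the proof is the specialization of $H_{\Delta_k}(y,y^{-1};2)$. The essential observation is the effect of the substitution $y_1=y$, $y_2=y^{-1}$: the change of variables from \eqref{eq:b2cal-HDeltak} gives $z_1=1-y$ and, crucially, $z_2=1-y_1y_2=0$. At $z_2=0$ each Appell $F_1$ in \eqref{eq:b2cal-HDelta-hgfun} degenerates to a Gauss hypergeometric via the standard reduction $F_1(a;b,b';c;x,0)=\pFq21(a,b;c;x)$ quoted in \eqref{eq:hgfun-F1toGF21-2}, while the factor $1-z_1$ simply equals $y$. For $m=2$ we have $\Gamma(3)=2$, $\Gamma(4)=6$, and the overall prefactor $1/(6m)=1/12$, so each of the three summands in \eqref{eq:b2cal-HDelta-hgfun} collapses cleanly into one of the three Gauss hypergeometric terms $8\pFq21(3,1;4;1-y)$, $-6\pFq21(4,1;5;1-y)$, $-3y\pFq21(4,2;5;1-y)$ appearing in the lemma.

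Combining the two contributions and dividing through by $6$ yields \eqref{eq:app-T}. The only mild subtlety is that the algebraic expression $(y^{-2}-1)/(y-1)$ is indeterminate at $y=1$ and must be interpreted by its limit there, but this plays no role in the algebraic identification away from $y=1$. There is no essential obstacle — this lemma is purely a matter of carrying out two substitutions and collecting coefficients.
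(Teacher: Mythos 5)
Your approach is the same as the paper's: evaluate $K_{\Delta_k}(1;2)=\tfrac16$ from \cref{eq:b2cal-KDelta-hgfun} using $\pFq21(a,b;c;0)=1$, observe that $y_2=y_1^{-1}$ forces $z_2=1-y_1y_2=0$ so that each Appell $F_1$ in \cref{eq:b2cal-HDelta-hgfun} collapses to a Gauss $\pFq21$ by \cref{eq:hgfun-F1toGF21-2}, and collect the Gamma factors at $m=2$. Your bookkeeping for the $H_{\Delta_k}$ part is correct and reproduces the three hypergeometric terms with coefficients $\tfrac{8}{6}$, $-\tfrac{6}{6}$ and $-\tfrac{3y}{6}$.

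However, the step you describe for the first term does not work as stated. Carrying out exactly the algebra you propose,
\begin{align*}
-K_{\Delta_k}(1)\,\frac{y^{-2}-1}{y-1}
= -\frac16\cdot\frac{-(y-1)(y+1)}{y^{2}(y-1)}
= \frac{1+y}{6y^{2}}
= \frac16\left(\frac1y+\frac1{y^{2}}\right),
\end{align*}
which carries an extra $\tfrac16 y^{-2}$ beyond the $\tfrac16 y^{-1}$ appearing in \cref{eq:app-T}, and nothing in the $H_{\Delta_k}$ contribution (purely hypergeometric once $z_2=0$) can absorb it. The paper's own one-line proof makes the identical unexamined assertion (``the first term gives rise to $y^{-1}/6$''), so the mismatch evidently traces to a typo in \cref{eq:funrel-T}: with numerator $y^{-1}-1$ instead of $y^{-2}-1$ the ratio equals $-y^{-1}$ exactly and the term becomes $\tfrac16 y^{-1}$ on the nose. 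That \cref{eq:app-T} is the intended statement can be cross-checked: it gives $T(1)=\tfrac16(1+8-6-3)=0$, as required by the later functional equation $T(y)+y^{-2}T(y^{-1})=0$, whereas the literal reading of \cref{eq:funrel-T} gives $T(1)=\tfrac13-\tfrac16=\tfrac16\neq0$. So you are proving the right statement by the right method, but the sentence claiming the rational prefactor ``contributes the $1/y$ term'' is false for the formula as printed; a blind proof should have flagged this discrepancy rather than asserted the simplification.
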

\begin{proof}
   The first term in \eqref{eq:funrel-T} gives rise to $y^{-1}/6$. 
    Special values of $\pFq21$ involved 
    can be easily  calculated via the integral
   representation in \cref{eq:hgeo-defn-pFq21}. 
   
   For the second term, $y_1
   = y_2^{-1}$ implies that $z_2 =0$, which leads to 
the reduction relation \eqref{eq:hgfun-F1toGF21-2} for Appell
$F_1$ functions.    
\end{proof}

\begin{prop}
\label{prop:app-T-functional-EQ}
     The function $T(y)$ defined above \eqref{eq:app-T} can be further 
     simplified to
     \begin{align*}
         T(y) = K(1)\brac{ y^{-1}- \pFq21(3,1;5;1-y) }, 
     \end{align*}
     where $K(1) \defeq K_{\Delta_k}(1;2) = 1/6$.
     Moreover, it satisfies the  functional equation:
\begin{align}
    T(y) + y^{-2} T(y^{-1}) =0.
\label{eq:app-T-functional-EQ}
\end{align}
\end{prop}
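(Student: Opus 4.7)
The proposition has two parts: simplifying $T(y)$, and verifying the functional equation. For the first part, I would set $z = 1-y$ and $F \defeq \pFq21(3,1;5;z)$, and observe that each of the three Gauss hypergeometric terms in Lemma \ref{lem:T-as-hgfun-step-1} is contiguous to $F$ with parameters $(a,b,c)=(3,1,5)$. Specifically, the contiguous relations of \cref{prop:hgeofun-cont-relations} give $\pFq21(3,1;4;z) = F + zF'/4$ (from $zF' = (c-1)(F(c{-}) - F)$) and $\pFq21(4,1;5;z) = F + zF'/3$ (from $zF' = a(F(a{+}) - F)$), while the Euler transformation \eqref{eq:hgeofun-Euler} with $c-a-b = 1$ yields $\pFq21(3,1;5;z) = (1-z)\pFq21(4,2;5;z)$, whence $\pFq21(4,2;5;z) = F/(1-z) = F/y$. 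Substituting into Lemma \ref{lem:T-as-hgfun-step-1}, the contributions proportional to $zF'$ cancel ($2zF' - 2zF' = 0$), and the coefficients of $F$ sum to $8 - 6 - 3 = -1$, giving at once $T(y) = \tfrac{1}{6}[y^{-1} - F] = K(1)[y^{-1} - \pFq21(3,1;5;1-y)]$.

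For the functional equation, the claim reduces to showing $\phi(y) + y^{-2}\phi(y^{-1}) = 2y^{-1}$ where $\phi(y) \defeq \pFq21(3,1;5;1-y)$. I would attack this via the Euler integral
\begin{align*}
\phi(y) = 4\int_0^1 (1-t)^3(1-t+yt)^{-3}\,dt.
\end{align*}
A change of variable $t \mapsto 1-t$ applied to $\phi(y^{-1})$ produces $y^{-2}\phi(y^{-1}) = 4y\int_0^1 t^3(1-t+yt)^{-3}\,dt$, so the symmetric combined integrand is
\begin{align*}
\phi(y) + y^{-2}\phi(y^{-1}) = 4\int_0^1 \frac{(1-t)^3 + yt^3}{(1-t+yt)^3}\,dt.
\end{align*}
The linear substitution $u = 1 + (y-1)t$ (so $u$ runs from $1$ to $y$, $du = (y-1)\,dt$) turns this into a rational integral: a short expansion shows that the numerator factors as $(y-1)(u^3 - 3yu + y^2 + y)$, and after dividing by $(y-1)^3 u^3$ the integrand becomes an elementary combination of $1$, $u^{-2}$, $u^{-3}$. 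Evaluating the antiderivative from $1$ to $y$ collapses the bracket to $(y-1)^3/(2y)$, which cancels against the prefactor $4/(y-1)^3$ and leaves exactly $2/y$, as required.

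The main obstacle is the functional equation itself: none of the single-step Pfaff or Euler transformations \eqref{eq:hgeofun-Pfaff-1}--\eqref{eq:hgeofun-Euler} maps $\pFq21(3,1;5;1-y^{-1})$ directly back to $\pFq21(3,1;5;1-y)$, so one is forced either through the integral representation above or through the explicit closed form $\pFq21(3,1;5;z) = 12 z^{-4}(1-z)\ln(1-z) + 12 z^{-3} - 6 z^{-2} - 2 z^{-1}$, obtainable from the partial fraction $(3)_n/(5)_n = 12[(n+3)^{-1} - (n+4)^{-1}]$ together with $-\ln(1-z) = \sum_{k\ge 1} z^k/k$; one then verifies that the logarithmic parts cancel and the rational parts combine to $2y^{-1}$. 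Either route is mechanical, with the only delicate point being the careful bookkeeping of the factor $(y-1)^3$ (appearing either as the Jacobian in the substitution or through $(1-y)^{-k} = (-1)^k(y-1)^{-k}$) to ensure the final cancellation goes through.
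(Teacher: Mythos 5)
Your proof is correct. The first half (the simplification of $T(y)$) is essentially the paper's argument: both reduce the three contiguous functions $\pFq21(3,1;4;z)$, $\pFq21(4,1;5;z)$, $\pFq21(4,2;5;z)$ to $F=\pFq21(3,1;5;z)$ via the relations of \cref{prop:hgeofun-cont-relations} and the Euler transformation; your organization through $F$ and $zF'$ is if anything cleaner, since the cancellation $2zF'-2zF'=0$ and the count $8-6-3=-1$ are displayed explicitly. The second half is where you genuinely diverge. The paper stays inside the transformation calculus: it applies Euler to write $\pFq21(3,1;5;1-y)=y\,\pFq21(2,4;5;1-y)$ and, crucially, the Pfaff transformation \eqref{eq:hgeofun-Pfaff-1} to write $y^{-2}\pFq21(3,1;5;1-y^{-1})=y\,\pFq21(3,4;5;1-y)$ --- so both terms land at the argument $1-y$ with shifted parameters --- and then finishes with a single contiguous relation plus $\pFq21(3,5;5;1-y)=y^{-3}$. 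You instead go through the Euler integral representation and an elementary rational integral; I checked the key steps ($(y-u)^3+y(u-1)^3=(y-1)(u^3-3yu+y^2+y)$ and the evaluation of the antiderivative to $(y-1)^3/(2y)$) and they are right. Your approach is more elementary and self-verifying, at the cost of an explicit integration; the paper's buys the ``purely combinatorial'' verification that is the stated point of the section. One small correction to your closing remark: you are not in fact forced to the integral or to the closed form with the logarithm --- the Pfaff transformation does carry the argument $1-y^{-1}$ back to $1-y$ (it just changes the parameters from $(3,1;5)$ to $(3,4;5)$), which is exactly the third route the paper takes.
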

\begin{proof}
    According to \cref{eq:hgeofun-Euler}, we have
    $\pFq21( 4,1;5;1-y) = y^{-1} \pFq21( 3,1;5;1 -y) $ and apply one of the
    contiguous relations in \cref{eq:hgeofun-cont-relations}, we see that
    \begin{align*}
        &\,\,  8 \pFq21(3, 1; 4; 1 - y) - 6 \pFq21( 4,1;5;1 - y ) 
- 3 y \pFq21( 4, 2; 5; 1 - y ) \\
=&\,\, -
\pFq21(3,1;5;1-y) .
    \end{align*}
     It remains to check that:
    \begin{align}
        \label{eq:app-T+tildeT-tocheck}
       0= 6\left[  T(y) + y^{-2} T(y^{-1}) \right] &=    
        \frac2y - \pFq21(3,1;5;1-y) 
        \\
        &- y^{-2}\pFq21(3,1;5;1-y^{-1})
        . \nonumber
    \end{align}
    Indeed,
    \begin{align*}
        \pFq21(3,1;5;1-y) &= y \pFq21(2,4;5;1-y), \,\,\,
        \text{by \eqref{eq:hgeofun-Euler},} \\
        y^{-2}\pFq21(3,1;5;1-y^{-1}) &=y \pFq21(3,4;5;1-y),
        \,\,\, 
        \text{by \eqref{eq:hgeofun-Pfaff-1}.}
    \end{align*}
    Therefore \cref{eq:app-T+tildeT-tocheck} above is reduced to:
    \begin{align}
 \label{eq:app-T+tildeT-tocheck-2}
 \pFq21(3,4;5;1-y) + \pFq21(2,4;5;1-y)= 2 y^{-2},
    \end{align}
which follows from one of the contiguous relations in 
\cref{prop:hgeofun-cont-relations}. In fact, take $F = \pFq21(3,4;5;1-s)$ then
$F(a-) = \pFq21(2,4;5;1-y)$, and  then \eqref{eq:app-T+tildeT-tocheck-2}
consists of the third line and the fifth line \cref{eq:hgeofun-cont-relations}:
\begin{align*}
        2F(a-)+ 2F = 4 y F(b+) = 4 y  \pFq21(3,5;5;1-y)
        = 4y^{-2}.
    \end{align*}
    We  need \cref{eq:hgeofun-specialcases} for the last equal sign.
\end{proof}

 \subsection{Connes-Moscovici Type Functional Relations}
On $\T^2_\theta$, a deeper result than the Gauss-Bonnet theorem is the
variational interpretation of the modular Gaussian curvature which leads to
a simple functional relation between $K_{\Delta_k}$ and $H_{\Delta_k}$. 
The variational interpretation 
with respect to the conformal change of metric $\Delta \to  \Delta_\varphi$
extends to all $\T^m_\theta$. 
The variational computation behind \cref{eq:CM-funrel-all-m}
is the main topic of the sequel paper \cite{Liu:2018ab}. Nevertheless,
the theorem below stands as an independent result. It simply asserts that 
the two functions give in 
\cref{eq:checkrels-explict-KDelta,eq:checkrels-explict-HDelta}
are subject to an a priori relation. Most importantly, it strongly validates
all the computations behind both the algebraic expressions of $K_{\Delta_k}$ and
$H_{\Delta_k}$  and the functional relations.
\begin{thm}
    \label{thm:-funrel-all-m}
For all $m \in [2,\infty)$, the spectral functions $K_{\Delta_k}$ and
$H_{\Delta_k}$ defined in
\eqref{eq:checkrels-explict-KDelta} and
\eqref{eq:checkrels-explict-HDelta} respectively satisfy the following
Connes-Moscovici type functional relation:
\begin{align}
    \begin{split}
        &\, \,  - H_{\Delta_k}(y_1,y_2;m)  \\ 
    =&\, \, 
y_1^{-m/2-2} K_{\Delta_k}(z;m)[y_1^{-1}, y_2^{-1}]_z 
- (y_1 y_2)^{-m/2-2} K_{\Delta_k}(z;m)[(y_1 y_2)^{-1}, y_2]_z 
\\
    -&\, \, 
    K_{\Delta_k}(z;m)[y_1 y_2, y_1]_z ,
    \end{split}
    \label{eq:CM-funrel-all-m}
\end{align}
where the divided difference notation is explained in
$\S\ref{subsec:divided-diff-notation}$.
\end{thm}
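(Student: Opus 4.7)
My plan is to use the reduction formula \cref{thm:red-to-GF21} to convert the claimed identity into a relation among divided differences of one-variable Gauss hypergeometric functions, and then to verify that relation using the contiguous and Pfaff/Euler transformations collected in \S\ref{subsec:gauss-appell-hgfuns}. The structural observation that makes this feasible is that both sides of \cref{eq:CM-funrel-all-m} are linear combinations of first divided differences of \emph{one-variable} functions; on the right this is manifest, and on the left it follows from the reduction formula \cref{eq:Ha011-to-Ha01-divdiff}.

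First, I would expand $H_{\Delta_k}(y_1,y_2;m)$ using \cref{eq:b2cal-HDeltak} and rewrite each of $H_{2,1,1}$, $H_{2,2,1}$, $H_{3,1,1}$ via \cref{eq:Ha011-to-Ha01-divdiff,eq:Ha0an-to-Ha011} as a (partial derivative of a) divided difference of the function $zH_{\alpha_0+1,1}(z;m)$ in $z$, evaluated at the two scalar points $z_1=1-y_1$ and $z_2=1-y_1y_2$. Since $H_{\alpha_0+1,1}(z;m)$ is a scalar multiple of $\pFq21(\alpha,1;\beta;z)$, this expresses $-H_{\Delta_k}(y_1,y_2;m)$ as an explicit linear combination of divided differences of $\pFq21$'s at $(z_1,z_2)$. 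The partial derivatives coming from $H_{2,2,1}$ and $H_{3,1,1}$ collapse into iterated (confluent) divided differences by the composition and confluence rules of \S\ref{subsec:divided-diff-notation}, and the extra factor $(1-z_1)=y_1$ in \cref{eq:b2cal-HDeltak} is absorbed by the Leibniz rule \cref{eq:dfiff-Leibniz}.

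Second, I would process the right-hand side of \cref{eq:CM-funrel-all-m}. Each term is a divided difference of the one-variable function $K_{\Delta_k}(z;m) = (4/m)H_{3,1}(z;m)-H_{2,1}(z;m)$ evaluated at scalar points $(y_1^{-1},y_2^{-1})$, $((y_1y_2)^{-1},y_2)$, $(y_1y_2,y_1)$ that are \emph{not} equal to $(z_1,z_2)$. To bring everything to the common base points $(z_1,z_2)$, I would apply the elementary change-of-variable identity $f[\phi(w_1),\phi(w_2)] = (f(\phi(w_1))-f(\phi(w_2)))/(\phi(w_1)-\phi(w_2))$ for the linear-fractional $\phi$'s relating $z$ to $y^{-1}$ (respectively $y$), and combine this with the Pfaff and Euler transformations \cref{eq:hgeofun-Pfaff-1,eq:hgeofun-Euler} to absorb the prefactors $y_1^{-m/2-2}$ and $(y_1y_2)^{-m/2-2}$ into the hypergeometric arguments. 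After this normalization, both sides become linear combinations of divided differences of $\pFq21(\alpha,1;\beta;\cdot)$ at $(z_1,z_2)$, and matching their coefficients reduces the theorem to an algebraic identity among such $\pFq21$'s with parameters shifted by small integers.

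Finally, I would verify the resulting algebraic identity by applying the contiguous relations of \cref{prop:hgeofun-cont-relations} termwise inside the divided differences; since divided differences are linear, this is a purely formal check once the common base points are reached. The main obstacle is the bookkeeping: the three divided differences on the right involve three distinct base-point pairs, and the Pfaff/inverse substitutions produce rational prefactors in $y_1,y_2$ and in $m$ whose cancellation is subtle. The saving grace is that $m$ enters only through the parameter $\tilde d_m$ in the $\pFq21$'s, so the identity is analytic (in fact rational in shifted parameters) in $m$ and, once verified by contiguous relations, automatically holds for all real $m\ge 2$ as asserted.
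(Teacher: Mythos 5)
Your overall setup---reducing everything via \cref{thm:red-to-GF21} to divided differences of the one-variable family $H_{a,1}(z;m)$---is exactly how the paper prepares the verification (this is the content of \cref{prop:KDelta-HDelta-alg-expressions}). But the paper then diverges sharply from what you propose: it computes \emph{explicit closed-form expressions} for $H_{2,1},H_{3,1},H_{4,1}$ (elementary functions involving $(1-z)^{m/2}$), assembles \cref{eq:checkrels-explict-KDelta,eq:checkrels-explict-HDelta}, and verifies the cancellation in \cref{eq:CM-funrel-all-m} by brute force in a computer algebra system. The author states explicitly that ``the functional relations listed in \S\ref{sec:diff-and-recursive-relations} are not sufficient to give a combinatorial computation for the verification,'' which is precisely the verification your final step claims to carry out. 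So your proposal is not a restatement of the paper's proof by another route; its decisive step is one the author reports having been unable to make work with the stated toolkit.

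The gap is concrete. After reduction, the left side involves evaluations of $H_{a,1}$ only at $z_1=1-y_1$ and $z_2=1-y_1y_2$ (plus a confluent derivative at $z_1$), whereas the three divided differences on the right involve evaluations at the six points $1-y_1^{-1}$, $1-y_2^{-1}$, $1-(y_1y_2)^{-1}$, $1-y_2$, $1-y_1y_2$, $1-y_1$. The Pfaff transformation \cref{eq:hgeofun-Pfaff-1} does carry argument $1-y$ to $1-y^{-1}$, so it can fold $1-y_1^{-1}$ and $1-(y_1y_2)^{-1}$ back onto $z_1$ and $z_2$; but the points $1-y_2$ and $1-y_2^{-1}$ have no counterpart on the left at all, so the corresponding contributions must cancel \emph{between} the first two divided differences on the right, which requires a reflection identity for $K_{\Delta_k}$ of the type $T(y)+y^{-2}T(y^{-1})=0$ in \cref{prop:app-T-functional-EQ} --- an ingredient your plan never identifies. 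Moreover, ``matching coefficients'' of $\pFq21$'s at common base points and then ``applying the contiguous relations termwise'' is asserted, not exhibited; for the $m=2$ Gauss--Bonnet identity the paper does carry out such a combinatorial argument in full (\cref{prop:app-T-functional-EQ}), and even that one-variable case requires a carefully chosen chain of Euler, Pfaff and contiguous relations. Without producing the analogous chain here (in two variables, with the confluent term from $H_{2,2,1}$ and the rational prefactors $y_1^{-m/2-2}$, $(y_1y_2)^{-m/2-2}$), the proposal does not constitute a proof; as written it is a plausible research plan whose key step is unverified and which the paper's own evidence suggests will not close without additional functional relations.
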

\begin{rem}
  The continuity with respect to $m$    
hints at the potential of the spectral approach to curvature 
to cope with noncommutative spaces of non-integer dimension.
 In the spectral framework, the notation of dimension is modeled on Weyl's law,
roughly speaking, is proportion to growth rate of the eigenvalues of the Dirac
operator.  Many of such examples have
been constructed, but they have not yet possessed  sophisticated calculus to
initiate detailed computations.
    
\end{rem}
\begin{proof}
    The functional relations listed in \S\ref{sec:diff-and-recursive-relations}
    in are not sufficient to give a combinatorial computation for the
    verification. The assistance of CASs (computer algebra systems) are required
    at the current stage.

    The conceptual part of the computation is given in
    \cref{prop:KDelta-HDelta-alg-expressions} in which we obtain the algebraic 
    expressions of $K_{\Delta_k}$ and $H_{\Delta_k}$ as explicit  functions in $m$. 
    The function \textbf{FullSimplify} in \textsf{Mathematica} is able to
    achieve full cancellation when substituting
    \cref{eq:checkrels-explict-KDelta,eq:checkrels-explict-HDelta} into the two
    sides of \cref{eq:CM-funrel-all-m}.

\end{proof}

%

\begin{prop}
    \label{prop:KDelta-HDelta-alg-expressions}
       For any $m \ge 2$, the modular curvature $R_{\Delta_k}$  in
       \textup{Theorem $\ref{thm:psecal-R_Delta_k}$} involves the following
       spectral functions    
\begin{align}
    \begin{split}
        K_{\Delta_k}(y;m) &= \frac{
  -8 y^{-\frac{m}{2}} \left((m (y-1)-4 y) y^{m/2}+y (m (y-1)+4)\right) \Gamma
  \left(\frac{m}{2}+2\right)      
        }{
        (m-2) m^2 (m+2) (y-1)^3
    } ,
    \end{split}
\label{eq:checkrels-explict-KDelta}
\end{align}
and
\begin{align}
\label{eq:checkrels-explict-HDelta}
    \begin{split}
         &\, \, 
        H_{\Delta_k}(y_1,y_2;m) \\
       =    &\, \,  
        \frac{2}{m}  (y_1-1)^{-2} (t-1)^{-2} (y_1 t-1)^{-3}
    \Gamma(m/2+1)
     \\ &
   \left[ 
   2y_1^{-m/2} (y_1 y_2-1)^3+
   2 (y_2-1)^2 \left(\frac{1}{2} m (y_1-1) (y_1 y_2-1)+y_1 (1-2 y_1) y_2+1\right)
      \right. 
    \\
    & \left. 
        -2 (y_1-1)^2 y_2 (y_1 y_2)^{-\frac{m}{2}} \left(\frac{1}{2} m (y_2-1)
            (y_1 y_2-1)+y_1 y_2^2+y_2-2\right)  
    \right]
. 
    \end{split}
      \end{align}
\end{prop}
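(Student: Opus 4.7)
My plan is to reduce everything to the one--variable family $H_{a,1}(z;m)$, evaluate those integrals in closed form as elementary functions of $z$ and $(1-z)^{-m/2}$, and then reassemble via the two--step reduction of \cref{thm:red-to-GF21}. For $K_{\Delta_k}$ this is direct since its defining combination $\tfrac{4}{m}H_{3,1}(z;m)-H_{2,1}(z;m)$ already involves only one--variable functions; for $H_{\Delta_k}$, I would use
\[
H_{2,1,1}(z_1,z_2;m) = (zH_{3,1}(z;m))[z_1,z_2]_z,\qquad H_{3,1,1}(z_1,z_2;m) = (zH_{4,1}(z;m))[z_1,z_2]_z,
\]
together with the differential relation $H_{2,2,1}(z_1,z_2;m)=\partial_{z_1}H_{2,1,1}(z_1,z_2;m)$ from \cref{eq:Ha0an-to-Ha011}, to write every summand in \cref{eq:b2cal-HDeltak} as a divided difference (or a derivative thereof) of one--variable data.

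The second step is to evaluate $H_{a,1}(z;m)$ explicitly from its hypergeometric integral representation. Inserting $b=1$ and $d=a+m/2-2$ in \cref{eq:tild-H-alpha-z-d-defn} gives
\[
H_{a,1}(z;m)=\frac{\Gamma(a+m/2-1)}{(a-1)!}\int_0^1(1-u)^{a-1}(1-zu)^{-a-m/2}\,du.
\]
The change of variable $v=1-zu$ turns this into a sum of two elementary integrals, producing a rational combination of $1$ and $(1-z)^{-m/2-k}$ (with $k$ between $0$ and $a-1$) divided by $z^a$. For $a=2,3,4$ (the only cases required) I would simply carry out this integration. Setting $y=1-z$ converts the answer to the format displayed in \cref{eq:checkrels-explict-KDelta}; assembling $\tfrac{4}{m}H_{3,1}-H_{2,1}$ and collecting over the common denominator $m^2(m-2)(m+2)(y-1)^3$ yields the stated formula for $K_{\Delta_k}(y;m)$.

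For $H_{\Delta_k}$, I would substitute the closed forms $zH_{a,1}(z;m)$ for $a=3,4$ into the divided--difference formulas above, using the symmetric version \cref{eq:divided-gen-formula} of the divided difference in the two distinct points $z_1,z_2$, i.e.\
\[
(zH_{a,1})[z_1,z_2] = \frac{z_1H_{a,1}(z_1;m)-z_2H_{a,1}(z_2;m)}{z_1-z_2},
\]
and compute $\partial_{z_1}$ of the resulting expression for the $H_{2,2,1}$ piece. Changing coordinates via $z_1=1-y_1$ and $z_2=1-y_1y_2$, then combining the three contributions with the prefactors of \cref{eq:b2cal-HDeltak}, should produce the rational function \cref{eq:checkrels-explict-HDelta}, with the isolated $y_1^{-m/2}$ and $(y_1y_2)^{-m/2}$ terms coming from the two distinct evaluations of $(1-z_l)^{-m/2-k}$.

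The main obstacle is purely combinatorial: the divided difference mixes three different powers $(1-z_l)^{-m/2-k}$, $k=0,1,2$, at two distinct points, and the subsequent clearing of denominators and regrouping around the common factors $(y_1-1)^{-2}(y_2-1)^{-2}(y_1y_2-1)^{-3}$ and $m^{-2}(m-2)^{-1}(m+2)^{-1}$ is algebraically heavy (even though each individual step is routine). For that reason I would delegate the final simplification to a computer algebra system, exactly as the paper's own proof does, while the conceptual content above guarantees that the output must be a rational function in $y_1,y_2,y_1^{-m/2},(y_1y_2)^{-m/2}$ with the claimed denominator structure.
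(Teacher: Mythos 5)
Your proposal follows essentially the same route as the paper's proof: evaluate the one--variable functions $H_{2,1}$, $H_{3,1}$, $H_{4,1}$ in closed form, obtain $H_{2,1,1}$ and $H_{3,1,1}$ as divided differences of $zH_{3,1}$ and $zH_{4,1}$ with $H_{2,2,1}=\partial_{z_1}H_{2,1,1}$, substitute $z=1-y$, $z_1=1-y_1$, $z_2=1-y_1y_2$, and delegate the final algebraic simplification to a computer algebra system. One small transcription slip: with $d=a+m/2-2$ the integrand exponent in your formula for $H_{a,1}$ should be $-(d+1)=-(a+m/2-1)$ rather than $-a-m/2$, consistent with the identification $H_{a,1}(z;m)=\frac{\Gamma(a+m/2-1)}{\Gamma(a+1)}\,\pFq21(a+m/2-1,1;a+1;z)$.
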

\begin{rem}
    To recover $K_{\Delta_k}(y;2)$ and $H_{\Delta_k}(y_1,y_2;2)$, one needs to
    compute the limit $m\to  2$   in  
    \cref{eq:checkrels-explict-KDelta,eq:checkrels-explict-HDelta}.
\end{rem}
\begin{proof}
    Using \textsf{Mathematica}, one computes explicitly the following Gauss
    hypergeometric functions:
    \begin{align*}
        H_{2,1}(z;m) &= \scriptstyle{
\frac{2 (1-z)^{-\frac{m}{2}} \left(-((m-2) z+2) (1-z)^{m/2}-2 z+2\right) \Gamma
\left(\frac{m}{2}+1\right)}{(m-2) m z^2}
}, \\
H_{3,1}(z;m) &= \scriptstyle{
\frac{(1-z)^{-\frac{m}{2}} \left(-((m-2) z (m z+4)+8) (1-z)^{m/2}-8 z+8\right)
\Gamma \left(\frac{m}{2}+2\right)}{m \left(m^2-4\right) z^3}
} ,\\
H_{4,1}(z;m) &= 
\scriptstyle{
\frac{(1-z)^{-\frac{m}{2}} \left(-((m-2) z (m z ((m+2) z+6)+24)+48)
(1-z)^{m/2}-48 (z-1)\right) \Gamma \left(\frac{m}{2}+3\right)}{3 (m-2) m (m+2)
(m+4) z^4}
}.
    \end{align*}
    For the two-variable family, we have the reduction using divided
    difference and differentials:   
    \begin{align*}
        H_{2,1,1}(z_1,z_2;m) &= ( z H_{3,1}(z;m) )[z_1,z_2]_z, \,\,
        H_{3,1,1}(z_1,z_2;m) = ( z H_{4,1}(z;m) )[z_1,z_2]_z \\
        H_{2,2,1}(z_1,z_2;m) &= \partial_{z_1} H_{2,1,1}(z_1,z_2;m).
    \end{align*}
    We remind the reader again $z = 1-y$, $z_1 = 1-y_1$ and $z_2 = 1-y_1y_2$.    
\end{proof}


\bibliographystyle{halpha}

\bibliography{mylib}

\end{document}